\newtheorem{teo}{Theorem}[section]
\newtheorem{defi}{Definition}[section]
\newtheorem{cor}{Corollary}[section]
\newtheorem{prop}{Proposition}[section]
\DeclareMathOperator{\im}{im}
\DeclareMathOperator{\dvol}{dvol}
\DeclareMathOperator{\ric}{Ric}
\DeclareMathOperator{\vol}{vol}
\title{\huge \bf Von Neumann dimension, Hodge index theorem and geometric applications}
\author{Francesco Bei  \bigskip \\
Institut Camille Jordan, Universit\'e de Lyon1,\\ E-mail addresses: \ bei@math.univ-lyon1.fr \     francescobei27@gmail.com }
\date{}
\begin{document}

\maketitle

\begin{abstract}
This note contains a reformulation of the Hodge index theorem within the framework of Atiyah's $L^2$-index theory. More precisely, given a compact K\"ahler manifold $(M,h)$ of even complex dimension $2m$, we prove that $$\sigma(M)=\sum_{p,q=0}^{2m}(-1)^ph_{(2),\Gamma}^{p,q}(M)$$ where $\sigma(M)$ is the signature of $M$ and  $h_{(2),\Gamma}^{p,q}(M)$ are the $L^2$-Hodge numbers of $M$ with respect to a Galois covering  having $\Gamma$ as group of Deck transformations. Likewise  we also prove an $L^2$-version of the Fr\"olicher index theorem, see \eqref{adriano}. Afterwards we give some applications of these two theorems and finally we conclude this paper by collecting other properties of the $L^2$-Hodge numbers. 
\end{abstract}

\noindent\textbf{Keywords}: Von Neumann dimension, Hodge index theorem, K\"ahler manifolds, $L^2$-Hodge numbers, K\"ahler parabolic manifolds, Euler characteristic.
\vspace{0.3 cm}

\noindent\textbf{Mathematics subject classification}:  32Q15, 32Q05, 58J20.

\tableofcontents

\section*{Introduction}
The purpose of this paper is to collect some applications of the $L^2$-index theory in the setting of compact complex manifolds. $L^2$-index theory was introduced in 1976 by Atiyah in his seminal paper \cite{MA}. Since then it became a deep and rich research topic with several applications to the geometry and topology of manifolds. Just to mention some of its great achievements we can recall the construction of new families of topological invariants such as the $L^2$-Betti numbers and the Novikov-Shubin invariants, the construction of more refined invariants such as the $L^2$-analytic torsion and  the Cheeger-Gromov $\rho^{(2)}$-invariant and many other applications to the geometry of manifolds such as the solution provided by Gromov, in the framework of negatively curved K\"ahler manifolds, of the Hopf conjecture. We invite the reader to consult  \cite{Luck} for an in-depth discussion on this topic. In this paper we focus on two classical results of complex geometry, the Hodge index theorem and a theorem due to Fr\"olicher which gives the Euler characteristic of a compact complex manifold in terms of its Hodge  numbers. More precisely the former theorem says that given a compact K\"ahler manifold $(M,h)$ of complex dimension $m=2n$ we have
\begin{equation}
\label{ealora}
\sigma(M)=\sum_{p,q=0}^{m}(-1)^ph^{p,q}_{\overline{\partial}}(M)
\end{equation} 
where $\sigma(M)$ is the signature of $M$, that is the signature of the intersection form of $M$ in middle degree. The latter theorem, that from now on will be called Fr\"olicher index theorem for the sake of simplicity, says that given a compact complex manifold $M$ of complex dimension $m$ its Euler characteristic  satisfies 
\begin{equation}
\label{lamenca}
\chi(M)=\sum_{p,q=0}^{m}(-1)^{p+q}h^{p,q}_{\overline{\partial}}(M).
\end{equation}
The first aim of this note is to reformulate the above two theorems by replacing the terms on the right hand sides of \eqref{ealora} and  \eqref{lamenca} with the corresponding $L^2$-versions. More precisely one of the main result of this paper can be summarized in this way:
\begin{teo}
\label{algarve}
Let $M$ be a compact complex manifold of complex dimension $m$ and let $\pi:\tilde{M}\rightarrow M$ be any Galois $\Gamma$-covering of $M$ where $\Gamma$ stands for the group of deck transformations of $\tilde{M}$. Its Euler characteristic satisfies 
\begin{equation}
\label{adriano}
\chi(M)=\sum_{p,q=0}^{m}(-1)^{p+q}h_{(2),\Gamma,\overline{\partial}}^{p,q}(M).
\end{equation}
Assume now that $m=2n$ and that $M$ admits a K\"ahler metric. Then $\sigma(M)$, the signature of $M$, satisfies 
\begin{equation}
\sigma(M)=\sum_{p,q=0}^{m}(-1)^ph_{(2),\Gamma}^{p,q}(M).
\end{equation}
\end{teo}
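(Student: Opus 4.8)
The plan is to derive both identities from a single input, namely Atiyah's $L^2$-index theorem \cite{MA} applied, for each fixed $p$, to the Dolbeault complex $(\Omega^{p,\bullet}(M),\overline{\partial})$ lifted to $\tilde{M}$. This complex is elliptic, so after rolling it up into the operator $\overline{\partial}+\overline{\partial}^{*}\colon\bigoplus_q\Omega^{p,2q}\to\bigoplus_q\Omega^{p,2q+1}$ its index equals the Dolbeault Euler characteristic $\chi(M,\Omega^p)=\sum_{q=0}^{m}(-1)^q h^{p,q}_{\overline{\partial}}(M)$. The metric, the $\Gamma$-action and all operators lift to $\tilde{M}$, and since $M$ is compact the lifted object is a $\Gamma$-equivariant elliptic complex whose $L^2$-harmonic spaces have finite Von Neumann dimensions, by definition the $L^2$-Hodge numbers $h^{p,q}_{(2),\Gamma,\overline{\partial}}(M)$. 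Thus the $\Gamma$-index of the lifted rolled-up operator is $\sum_{q=0}^{m}(-1)^q h^{p,q}_{(2),\Gamma,\overline{\partial}}(M)$, and Atiyah's theorem gives, for every $p$,
\begin{equation}
\label{pigreco}
\sum_{q=0}^{m}(-1)^q h^{p,q}_{(2),\Gamma,\overline{\partial}}(M)=\sum_{q=0}^{m}(-1)^q h^{p,q}_{\overline{\partial}}(M).
\end{equation}

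For \eqref{adriano} I would then multiply \eqref{pigreco} by $(-1)^p$ and sum over $p$, obtaining
\[
\sum_{p,q=0}^{m}(-1)^{p+q}h^{p,q}_{(2),\Gamma,\overline{\partial}}(M)=\sum_{p,q=0}^{m}(-1)^{p+q}h^{p,q}_{\overline{\partial}}(M)=\chi(M),
\]
where the last equality is the classical Fr\"olicher index theorem \eqref{lamenca}. Note that no K\"ahler hypothesis enters here, in agreement with the statement.

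For the signature part, assuming $m=2n$ and $M$ K\"ahler, I would again sum \eqref{pigreco} over $p$, this time without the extra sign, to get $\sum_{p,q}(-1)^q h^{p,q}_{(2),\Gamma,\overline{\partial}}(M)=\sum_{p,q}(-1)^q h^{p,q}_{\overline{\partial}}(M)$, and then convert the $(-1)^q$ weighting into the $(-1)^p$ one by two symmetries. First, the K\"ahler identities hold verbatim on $\tilde{M}$, so the $L^2$-harmonic $(p,q)$-forms coincide with the $\overline{\partial}$-harmonic ones, which justifies writing $h^{p,q}_{(2),\Gamma}(M)=h^{p,q}_{(2),\Gamma,\overline{\partial}}(M)$. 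Second, complex conjugation is a $\Gamma$-equivariant real-linear isometry carrying $L^2$-harmonic $(p,q)$-forms onto $L^2$-harmonic $(q,p)$-forms, and since the Von Neumann dimension is invariant under such maps one gets the $L^2$-Hodge symmetry $h^{p,q}_{(2),\Gamma}(M)=h^{q,p}_{(2),\Gamma}(M)$, paralleling the classical one. Hence
\begin{align*}
\sum_{p,q=0}^{m}(-1)^p h^{p,q}_{(2),\Gamma}(M)
&=\sum_{p,q=0}^{m}(-1)^q h^{p,q}_{(2),\Gamma}(M)
=\sum_{p,q=0}^{m}(-1)^q h^{p,q}_{\overline{\partial}}(M)\\
&=\sum_{p,q=0}^{m}(-1)^p h^{p,q}_{\overline{\partial}}(M)=\sigma(M),
\end{align*}
the final equality being the classical Hodge index theorem \eqref{ealora}.

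The routine part is the sign bookkeeping. The point requiring care is the first paragraph: one must set up $L^2$-Dolbeault--Hodge theory on $\tilde{M}$ so that Atiyah's theorem applies literally, i.e.\ verify that the $h^{p,q}_{(2),\Gamma,\overline{\partial}}(M)$ are well-defined finite Von Neumann dimensions of the spaces of $L^2$-harmonic $(p,q)$-forms, that they do not depend on the auxiliary choices, and that the $\Gamma$-index of the lifted rolled-up Dolbeault operator is exactly $\sum_{q}(-1)^q h^{p,q}_{(2),\Gamma,\overline{\partial}}(M)$. I expect this---together with the invariance of $\dim_{\Gamma}$ under complex conjugation used above---to be the only genuine, though essentially standard, obstacle; once it is in place, both identities follow by combining \eqref{pigreco} with the classical theorems \eqref{lamenca} and \eqref{ealora}.
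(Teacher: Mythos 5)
Your proposal is correct and takes essentially the same route as the paper: the paper likewise deduces both identities from the fixed-$p$ Atiyah-type relation $\sum_q(-1)^qh^{p,q}_{\overline{\partial}}(M)=\sum_q(-1)^qh^{p,q}_{(2),\Gamma,\overline{\partial}}(M)$ (its Th. \ref{gamma2}), combined with the classical Fr\"olicher and Hodge index theorems and, for the signature, the two Hodge symmetries $h^{p,q}=h^{q,p}$ and $h^{p,q}_{(2),\Gamma}=h^{q,p}_{(2),\Gamma}$ available in the K\"ahler case. Your sign bookkeeping and your identification of where the K\"ahler hypothesis enters match the paper's argument.
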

We refer to the  first section of this paper for the definition and the main properties of the $L^2$-Hodge numbers $h_{(2),\Gamma,\overline{\partial}}^{p,q}(M)$ and $h_{(2),\Gamma}^{p,q}(M)$. The second main goal of this note is to explore various applications of Th. \ref{algarve}. We are particularly interested in the Euler characteristic of compact, K\"ahler, K\"ahler parabolic manifolds, an interesting class of compact K\"ahler manifolds that includes those with non-positive sectional curvatures, see Def. \ref{garagatto}. Some of our applications can be summarized as follows:
\begin{teo}
\label{appli}
Let $(M,h)$ be a compact, K\"ahler, K\"ahler-parabolic manifold of complex dimension $2m$. Assume that $\sigma(M)\neq 0$. We have the following properties:
\begin{enumerate}
\item  The Euler characteristic of $M$ satisfies $\chi(M)>0$.
\item Let $(\tilde{M},\tilde{h})$ be the universal covering of  $(M,h)$ and let $\tilde{h}:=\pi^*h$. Then there exists at least one pair $(p,q)$ with $p+q=2m$ such that $h^{p,q}_{(2)}(M)\neq 0$. In particular the corresponding space of $L^2$-harmonic forms $\mathcal{H}^{p,q}_{2}(\tilde{M},\tilde{h})$ is infinite dimensional.
\end{enumerate}
\end{teo}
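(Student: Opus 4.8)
The plan is to reduce both assertions to the two $L^2$-index formulas of Theorem~\ref{algarve} combined with the off-middle-degree vanishing of $L^2$-harmonic forms supplied by the K\"ahler-parabolic hypothesis. Throughout let $\pi:\tilde M\to M$ be the universal covering, $\Gamma=\pi_1(M)$ its group of deck transformations, and $\tilde h=\pi^*h$; note that since $M$ is compact its K\"ahler form $\omega$ is not exact, whereas the K\"ahler-parabolic condition (Def.~\ref{garagatto}) makes $\pi^*\omega$ exact on $\tilde M$, so $\tilde M$ is necessarily noncompact and hence $\Gamma$ is infinite. The key ingredient I would invoke is Gromov's $L^2$-Hodge theory in its K\"ahler-parabolic form: the spaces of $L^2$-harmonic forms $\mathcal{H}^{p,q}_2(\tilde M,\tilde h)$ vanish off the middle degree, that is $h^{p,q}_{(2)}(M)=0$ whenever $p+q\neq 2m$. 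On a K\"ahler manifold the K\"ahler identities identify $d$- and $\overline{\partial}$-harmonic forms up to type, so this vanishing applies simultaneously to $h^{p,q}_{(2),\Gamma}(M)$ and $h^{p,q}_{(2),\Gamma,\overline{\partial}}(M)$, which is exactly what allows it to be fed into both formulas; here $h^{p,q}_{(2)}(M)=h^{p,q}_{(2),\Gamma}(M)$ for the universal cover.

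Next I would substitute this vanishing into the $L^2$-Hodge index formula of Theorem~\ref{algarve}, applied with complex dimension $2m$:
\[
\sigma(M)=\sum_{p,q=0}^{2m}(-1)^p h^{p,q}_{(2),\Gamma}(M)=\sum_{p+q=2m}(-1)^p h^{p,q}_{(2),\Gamma}(M).
\]
Since $\sigma(M)\neq 0$ by hypothesis, the right-hand sum cannot vanish, so at least one summand with $p+q=2m$ must be nonzero; this is precisely the existence assertion of part~(2). As $h^{p,q}_{(2)}(M)=\dim_{\Gamma}\mathcal{H}^{p,q}_2(\tilde M,\tilde h)$ and $\Gamma$ is infinite, a strictly positive von Neumann dimension forces the corresponding Hilbert $\Gamma$-module to be infinite-dimensional as a complex vector space, giving $\dim_{\mathbb{C}}\mathcal{H}^{p,q}_2(\tilde M,\tilde h)=\infty$ and completing part~(2).

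For part~(1) I would run the same substitution through the $L^2$-Fr\"olicher formula \eqref{adriano}, using that $(-1)^{p+q}=(-1)^{2m}=1$ on the surviving middle-degree terms:
\[
\chi(M)=\sum_{p,q=0}^{2m}(-1)^{p+q} h^{p,q}_{(2),\Gamma,\overline{\partial}}(M)=\sum_{p+q=2m} h^{p,q}_{(2),\Gamma,\overline{\partial}}(M).
\]
Each summand is a nonnegative von Neumann dimension, so a priori $\chi(M)\geq 0$; and the nonzero middle-degree Hodge number produced by the signature computation above, being strictly positive, upgrades this to $\chi(M)>0$.

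The only genuinely nontrivial input here is the off-middle-degree vanishing of the $L^2$-harmonic forms, which rests on Gromov's vanishing theorem adapted to the parabolic (rather than hyperbolic) regime; once that is in hand, the two index formulas of Theorem~\ref{algarve} convert the single hypothesis $\sigma(M)\neq 0$ into strict positivity essentially by bookkeeping of signs in the middle degree. I therefore expect the main obstacle to lie entirely in correctly establishing (or quoting) this vanishing for K\"ahler-parabolic manifolds, together with verifying that the identification $h^{p,q}_{(2),\Gamma}=h^{p,q}_{(2),\Gamma,\overline{\partial}}$ needed to pass between the signature and Euler-characteristic formulas remains valid in this $L^2$ framework.
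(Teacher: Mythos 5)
Your proposal is correct and follows essentially the same route as the paper: both rest on the Cao--Xavier/Jost--Zuo vanishing of $L^2$-harmonic forms off the middle degree, substituted into the $L^2$-Hodge index formula to produce a nonzero $L^2$-Hodge number in degree $p+q=2m$ (with infinite-dimensionality of $\mathcal{H}^{p,q}_2(\tilde{M},\tilde{h})$ coming from positivity of the von Neumann dimension over the infinite group $\Gamma$, i.e.\ Lemma 15.10 of \cite{Roe}), and then into an Euler-characteristic identity where all surviving terms carry the sign $(-1)^{2m}=+1$. The only cosmetic difference is that for part (1) the paper computes $\chi(M)$ via the $L^2$-Betti numbers, $\chi(M)=\sum_k(-1)^k b^{(2)}_k(M)=b^{(2)}_{2m}(M)$, rather than via the $L^2$-Fr\"olicher formula as you do, which amounts to the same thing in the K\"ahler case because $b^{(2)}_k(M)=\sum_{p+q=k}h^{p,q}_{(2)}(M)$.
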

Focusing on compact K\"ahler surfaces we have more applications that can be summarized as follows:
\begin{teo}
\label{rigoberto}
Let $(M,h)$ be a compact K\"ahler surface with infinite fundamental group. Let $\pi:\tilde{M}\rightarrow M$ be the universal covering of $(M,h)$ and let $\tilde{h}:=\pi^*h$. We have the following properties:
\begin{enumerate}
\item Assume that  $\sigma(M)\neq 0$. Then there exists at least one pair $(p,q)$ with $p+q=2$ such that $h^{p,q}_{(2)}(M)\neq 0$. In particular the corresponding space of $L^2$-harmonic forms $\mathcal{H}^{p,q}_{2}(\tilde{M},\tilde{h})$ is infinite dimensional.
\item  Assume  that $\sigma(M)>0$. Then $M$ satisfies $h^{2,0}_{(2)}(M)>0$  and therefore $\mathcal{H}^{2,0}_2(\tilde{M},\tilde{h})$, that is the space of  holomorphic $L^2$-$(2,0)$-forms on $(\tilde{M},\tilde{h})$, is infinite dimensional.
\item  Assume that $h^{1,0}_{(2)}(M)=0$ and that $\sigma(M)\neq 0$. Then $\chi(M)>0$.
\item  Assume that $h^{1,0}_{(2)}(M)=0$ and that $\sigma(M)>0$. Then $\chi(M)>0$ and $\chi(M,\mathcal{O}_M)>0$.
\item Assume that $(M,h)$ has non-positive sectional curvatures and that $\sigma(M)>0$. Then the Kodaira dimension of $M$ is $2$. In particular $M$ is  projective algebraic.
\end{enumerate}
\end{teo}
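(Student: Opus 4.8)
The plan is to specialize Theorem \ref{algarve} to the surface case $m=2$, turn the two index formulas into explicit linear relations among the finitely many $L^2$-Hodge numbers, and then read off each assertion. Throughout I write $\Gamma=\pi_1(M)$, which is infinite by hypothesis, take the universal covering $\pi\colon\tilde M\to M$, and abbreviate $h^{p,q}_{(2)}(M):=h^{p,q}_{(2),\Gamma}(M)=h^{p,q}_{(2),\Gamma,\overline{\partial}}(M)$ (the two coincide on a K\"ahler manifold). First I would record the symmetries $h^{p,q}_{(2)}=h^{q,p}_{(2)}=h^{2-p,2-q}_{(2)}$ and the vanishing $h^{0,0}_{(2)}=h^{2,2}_{(2)}=0$, valid since $\Gamma$ is infinite (this is $b^{(2)}_0(\tilde M)=0$). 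Setting $a:=h^{1,0}_{(2)}$, $b:=h^{2,0}_{(2)}$, $c:=h^{1,1}_{(2)}$, the signature and Euler-characteristic formulas of Theorem \ref{algarve} then collapse, after the off-middle terms cancel via the symmetries, to
\[ \sigma(M)=2b-c, \qquad \chi(M)=-4a+2b+c. \]

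Parts (1) and (2) are immediate from these. Since $b$ and $c$ are the middle-degree numbers $h^{2,0}_{(2)}$ and $h^{1,1}_{(2)}$, the relation $\sigma=2b-c$ shows that $\sigma\neq 0$ forces $b\neq 0$ or $c\neq 0$, i.e. a nonzero $L^2$-Hodge number in degree $p+q=2$ (Part (1)); and $\sigma>0$ forces $2b>c\ge 0$, hence $b=h^{2,0}_{(2)}>0$ (Part (2)). In each case a positive von Neumann dimension over the group von Neumann algebra $\mathcal{N}(\Gamma)$ with $\Gamma$ infinite forces the corresponding space of $L^2$-harmonic forms to be infinite-dimensional over $\mathbb{C}$ (the property recalled in Section~1); for $(2,0)$ one moreover notes that $\overline{\partial}^*$ vanishes identically on $(2,0)$-forms, so $\mathcal{H}^{2,0}_2(\tilde M,\tilde h)$ is exactly the space of holomorphic $L^2$-$(2,0)$-forms.

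For Parts (3) and (4) I would impose $a=h^{1,0}_{(2)}=0$, so that $\chi(M)=2b+c\ge 0$ with both summands nonnegative; equality would give $b=c=0$ and hence $\sigma=2b-c=0$, so $\sigma\neq 0$ yields $\chi(M)>0$ (Part (3)). For the holomorphic Euler characteristic I would apply Atiyah's $L^2$-index theorem to the elliptic Dolbeault complex $(\Omega^{0,\bullet},\overline{\partial})$, obtaining $\chi(M,\mathcal{O}_M)=\sum_{q}(-1)^q h^{0,q}_{(2)}=h^{0,0}_{(2)}-h^{0,1}_{(2)}+h^{0,2}_{(2)}=b$, using $h^{0,0}_{(2)}=0$, $h^{0,1}_{(2)}=a=0$ and $h^{0,2}_{(2)}=b$. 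Then $\sigma>0$ gives $b>0$ by Part (2), so $\chi(M,\mathcal{O}_M)=h^{2,0}_{(2)}>0$, while $\chi(M)>0$ follows again from Part (3); this is Part (4).

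Part (5) is where the real work lies. Non-positive sectional curvature makes the universal cover a Cartan--Hadamard manifold, hence contractible, so $M$ is aspherical; it also makes $(M,h)$ K\"ahler parabolic, a bounded-growth primitive of $\tilde\omega$ being produced from the gradient of the squared distance function (cf. Def.~\ref{garagatto}). The vanishing of the $L^2$-Hodge numbers outside the middle degree for K\"ahler parabolic manifolds (the same input behind Theorem~\ref{appli}) gives in particular $h^{1,0}_{(2)}=0$, so Part~(4) applies and yields $\chi(M)>0$ and $\chi(M,\mathcal{O}_M)>0$. It then remains to pin down $\kappa(M)$ by the Enriques--Kodaira classification of K\"ahler surfaces: the cases $\kappa=-\infty$ (rational or ruled) together with the K3 and Enriques cases of $\kappa=0$ all have $\pi_2\neq 0$ (they contain a rational curve or an $S^2$-fibre), contradicting asphericity; the remaining $\kappa=0$ surfaces (tori and bielliptic surfaces) have $\sigma=0$; and a minimal properly elliptic surface ($\kappa=1$) has $c_1^2=0$, whence $\sigma=\tfrac{1}{3}(c_1^2-2c_2)=-\tfrac{2}{3}\chi\le 0$, a sign only worsened by blow-ups. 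As each of these contradicts $\sigma>0$ or asphericity, the only possibility is $\kappa(M)=2$, and a surface of general type is projective algebraic (it is Moishezon and K\"ahler, hence projective). I expect the main obstacle to be exactly this last part: both verifying that non-positive curvature delivers K\"ahler parabolicity together with the concentration $h^{1,0}_{(2)}=0$, and carrying out the classification bookkeeping—particularly the signature sign in the $\kappa=1$ case and the asphericity obstruction for ruled surfaces—without gaps.
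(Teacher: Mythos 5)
Your parts (1)--(4) are correct and follow essentially the same route as the paper (Prop.~\ref{tianjin} and Prop.~\ref{luccio}): the vanishing $h^{0,0}_{(2)}(M)=h^{2,2}_{(2)}(M)=0$ from the infinite volume of the cover, the symmetries \eqref{hhh} and \eqref{please}, the resulting reductions $\sigma(M)=2h^{2,0}_{(2)}(M)-h^{1,1}_{(2)}(M)$ and $\chi(M,\mathcal{O}_M)=h^{2,0}_{(2)}(M)$, and Lemma~15.10 of \cite{Roe} for the passage from positive Von Neumann dimension to infinite dimensionality are all exactly the paper's steps.

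Part (5) is where you genuinely diverge. The paper stays analytic: having obtained $h^{2,0}_{(2)}(M)>0$, the infinite dimensionality of $\mathcal{H}^{2,0}_2(\tilde{M},\tilde{h})$ and $\chi(M,\mathcal{O}_M)>0$ from parts (2) and (4), it invokes Wu's theorem (\cite{MaMa}, p.~283) to conclude that the universal cover is Stein, hence contains no compact complex subvarieties of positive dimension, and then cites Gromov's argument (\cite{Gromov}, pp.~287--288) to convert this data directly into Kodaira dimension $2$. You instead run the Enriques--Kodaira classification: asphericity of $M$ (Cartan--Hadamard) excludes the rational, ruled and non-minimal cases, finiteness of $\pi_1$ or $\sigma=0$ excludes the $\kappa=0$ cases, and the Hirzebruch formula $\sigma=\tfrac{1}{3}(c_1^2-2c_2)$ combined with $\chi(M)>0$ from part (3) excludes minimal properly elliptic surfaces, leaving $\kappa=2$; general type plus K\"ahler then gives projectivity. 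Your route is self-contained modulo the classification and avoids treating Gromov's pages as a black box, at the cost of case-by-case bookkeeping that is specific to surfaces; the paper's route is shorter and is the one that would survive in higher dimension. Both are sound. The one step you should tighten is the claim that a rational curve ``contradicts asphericity'' via $\pi_2\neq 0$: the clean argument is that a nonconstant holomorphic map $\mathbb{P}^1\rightarrow\tilde{M}$ is impossible because $\tilde{\omega}$ is exact on the contractible $\tilde{M}$, so by Stokes the lifted curve would have zero area; this uniformly disposes of all rational curves, including the exceptional curves of the non-minimal cases.
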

We continue now  this introduction by describing the structure of this paper. The first section  is divided in two parts. The first subsection contains some generalities about $d$, $\overline{\partial}$ and the corresponding $L^2$-cohomologies. In the second subsection we recall briefly the notion of Von Neumann dimension and then we proceed by defining the $L^2$-Hodge numbers. The  second section is devoted to  Th. \ref{algarve} and its proof. The third  section  is concerned with  the applications of Th. \ref{algarve}, some of them have been previously  summarized in Th. \ref{appli} and Th. \ref{rigoberto}. Finally the fourth and last section of this note collects other properties of the $L^2$-Hodge numbers. We can summarize them as follows:
\begin{teo}
\label{nonsos}
Let $M_1$ and $M_2$ be two compact complex manifolds of complex dimension $m$. Assume that there exists a modification $\phi:M_1\rightarrow M_2$.
Then we have the following equality: $$h^{m,0}_{(2),\overline{\partial}}(M_1)=h^{m,0}_{(2),\overline{\partial}}(M_2).$$
\end{teo}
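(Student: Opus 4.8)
The plan is to realize both $L^2$-Hodge numbers $h^{m,0}_{(2),\overline{\partial}}(M_i)$ as $\Gamma$-dimensions of spaces of $L^2$ holomorphic top-degree forms on the relevant coverings, and then to transport one space onto the other by pulling back forms along a lift of the modification $\phi$. First I would record the elementary identification: on an $m$-dimensional complex manifold an $(m,0)$-form lies in $\ker\overline{\partial}^*$ for trivial degree reasons (since $\overline{\partial}^*$ would lower the antiholomorphic degree below zero), so a square-integrable $(m,0)$-form is $\overline{\partial}$-harmonic if and only if it is $\overline{\partial}$-closed, i.e.\ holomorphic. Writing $\pi_i:\tilde{M}_i\to M_i$ for the chosen Galois $\Gamma$-coverings and $\tilde{h}_i=\pi_i^*h_i$, the space $\mathcal{H}^{m,0}_2(\tilde{M}_i,\tilde{h}_i)$ is therefore exactly the space of $L^2$ holomorphic $m$-forms on $\tilde{M}_i$, and $h^{m,0}_{(2),\overline{\partial}}(M_i)=\dim_\Gamma\mathcal{H}^{m,0}_2(\tilde{M}_i,\tilde{h}_i)$.

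The conceptual heart of the argument is that the $L^2$-norm of a top-degree holomorphic form is metric-independent. In local holomorphic coordinates the determinant of the metric cancels against the determinant of its inverse, yielding $|\omega|^2_{\tilde{h}}\,\dvol_{\tilde{h}}=c_m\,\omega\wedge\overline{\omega}$ for a constant $c_m$ depending only on $m$, so that $\|\omega\|^2_{L^2}=c_m\int\omega\wedge\overline{\omega}$ does not involve $\tilde{h}$ at all. Consequently the $L^2$ condition for holomorphic $m$-forms is intrinsic, is preserved by any biholomorphism, and, crucially, is preserved by any map that is a biholomorphism off a set of measure zero, since such a set contributes nothing to the integral.

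With these two facts in hand I would proceed as follows. Since $\phi$ is a modification it induces an isomorphism $\phi_*:\pi_1(M_1)\to\pi_1(M_2)$ (the indeterminacy locus of $\phi^{-1}$ has complex codimension $\geq 2$ in $M_2$, and modifications preserve the fundamental group); identifying the two groups of deck transformations with a single $\Gamma$ via $\phi_*$, the lifting criterion produces a $\Gamma$-equivariant lift $\tilde{\phi}:\tilde{M}_1\to\tilde{M}_2$, which is again a modification, biholomorphic outside $\tilde{E}=\pi_1^{-1}(E)$ whose image $\pi_2^{-1}(A)$ has codimension $\geq 2$. Pullback $\tilde{\phi}^*$ sends holomorphic $m$-forms on $\tilde{M}_2$ to holomorphic $m$-forms on $\tilde{M}_1$; conversely, given a holomorphic $m$-form on $\tilde{M}_1$, transporting it by $\tilde{\phi}^{-1}$ over the complement of the codimension-$\geq 2$ locus and invoking the Riemann extension theorem produces its unique holomorphic preimage, so $\tilde{\phi}^*$ is a bijection. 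By the change-of-variables formula together with the metric-independence above, $\tilde{\phi}^*$ preserves the $L^2$-norm, hence restricts to a $\Gamma$-equivariant isometric isomorphism $\mathcal{H}^{m,0}_2(\tilde{M}_2,\tilde{h}_2)\to\mathcal{H}^{m,0}_2(\tilde{M}_1,\tilde{h}_1)$. Since the Von Neumann dimension is invariant under $\Gamma$-equivariant isometries, the two $\Gamma$-dimensions coincide and the claimed equality follows.

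The analytic and dimension-theoretic steps are clean precisely because of the metric-independence of the top-degree $L^2$-norm; the genuine technical points are, first, the extension of holomorphic $m$-forms across the codimension-$\geq 2$ indeterminacy locus, and second, the construction and $\Gamma$-equivariance of the lift $\tilde{\phi}$, which rests on a modification inducing an isomorphism on fundamental groups so that the two coverings may be compared with the same $\Gamma$. I expect this latter point, namely controlling how the covering of $M_1$ relates to that of $M_2$, to be the step demanding the most care; once it is settled, everything else reduces to the removable-singularity theorem and the change-of-variables formula.
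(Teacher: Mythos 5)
Your proposal is correct, and its overall skeleton (isomorphism of fundamental groups, a $\Gamma$-equivariant lift $\tilde{\phi}$, metric-independence of the $L^2$-norm on $(m,0)$-forms, identification of $\mathcal{H}^{m,0}_2$ with $L^2$ holomorphic $m$-forms) matches the paper's. Where you genuinely diverge is in the key technical step of transporting forms across the exceptional loci. The paper does this by pure functional analysis: it proves that the complement of an analytic set in a compact Hermitian manifold is parabolic, deduces that the Hodge--Dolbeault operator on $A=\tilde{M}_1\setminus\pi_1^{-1}(Z)$ is essentially self-adjoint with minimal and maximal extensions equal to the operator on all of $\tilde{M}_1$ (Propositions \ref{monacchi}, \ref{occaminonna}, \ref{lit}), and then identifies $\mathcal{H}^{m,0}_{2,\overline{\partial}_{\max}}(A,\gamma|_A)$ with $\mathcal{H}^{m,0}_{2,\overline{\partial}}(\tilde{M}_1,\tilde{g}_1)$ by elliptic regularity. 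You instead exploit the asymmetry of the situation: pullback along the everywhere-holomorphic $\tilde{\phi}$ needs no extension at all, while the pushforward direction only requires extending a holomorphic $m$-form across $\pi_2^{-1}(W)$, which has codimension at least $2$, so the classical Hartogs/Riemann extension theorem applies with no $L^2$ input. Your route is more elementary and shorter; the paper's parabolicity machinery is heavier but does not depend on the codimension of the bad set and is the kind of argument that survives in settings where removable-singularity theorems are unavailable. The one place you should be a bit more careful is the claim that $\tilde{\phi}$ restricts to a biholomorphism between $\pi_1^{-1}(M_1\setminus Z)$ and $\pi_2^{-1}(M_2\setminus W)$: these are not universal coverings of $M_1\setminus Z$ and $M_2\setminus W$, so one must check that the two subgroups they correspond to (the kernels of $\pi_1(M_i\setminus\cdot)\to\pi_1(M_i)$) match under the isomorphism induced by $\phi|_{M_1\setminus Z}$; this follows from the commutativity of the obvious square, and you correctly flag this comparison of coverings as the delicate point.
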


\begin{cor}
\label{daidaiz}
Let $M_1$ and $M_2$ be two compact complex manifolds of complex dimension $m$. Assume that they are bimeromorphic.
\begin{enumerate}
\item We have the following equality:  $h^{m,0}_{(2),\overline{\partial}}(M_1)=h^{m,0}_{(2),\overline{\partial}}(M_2).$
\item Assume now that $m=2$ and that both $M_1$ and $M_2$ admit a K\"ahler metric. Then $h^{p,0}_{(2)}(M_1)=h^{p,0}_{(2)}(M_2)$ for each $p=0,1,2$ and $h^{0,q}_{(2)}(M_1)=h^{0,q}_{(2)}(M_2)$ for each  $q=0,1,2$. Moreover $h^{1,1}(M_1)=h^{1,1}(M_2)$ if and only if $h^{1,1}_{(2)}(M_1)=h^{1,1}_{(2)}(M_2)$.
\end{enumerate}
\end{cor}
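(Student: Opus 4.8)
The plan is to deduce both parts of Corollary \ref{daidaiz} from Theorem \ref{nonsos} and from the Euler characteristic identity \eqref{adriano} of Theorem \ref{algarve}, using two classical facts from bimeromorphic geometry: that two bimeromorphic compact complex manifolds admit a common modification, and that for surfaces any bimeromorphic map factors as a finite zig-zag of point blow-ups. For part (1), I would resolve the indeterminacy of the bimeromorphic map $M_1\dashrightarrow M_2$ by desingularizing the closure of its graph in $M_1\times M_2$, producing a compact complex manifold $W$ together with modifications $\psi_i\colon W\to M_i$, $i=1,2$. Applying Theorem \ref{nonsos} to each $\psi_i$ gives $h^{m,0}_{(2),\overline{\partial}}(W)=h^{m,0}_{(2),\overline{\partial}}(M_i)$, and comparing the two equalities yields $h^{m,0}_{(2),\overline{\partial}}(M_1)=h^{m,0}_{(2),\overline{\partial}}(M_2)$.

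For part (2) I work with Kähler surfaces ($m=2$), where $h^{p,q}_{(2)}=h^{p,q}_{(2),\overline{\partial}}$ by the $L^2$ Kähler identities and where the von Neumann dimensions obey the conjugation symmetry $h^{p,q}_{(2)}=h^{q,p}_{(2)}$ and the Hodge-star duality $h^{p,q}_{(2)}=h^{m-p,m-q}_{(2)}$. Thanks to these symmetries it suffices to prove the invariance of $h^{0,0}_{(2)}$, $h^{1,0}_{(2)}$ and $h^{2,0}_{(2)}$; the $h^{0,q}_{(2)}$ statements then follow by conjugation. Now $h^{0,0}_{(2)}$ is the zeroth $L^2$-Betti number $b^{(2)}_0(\tilde M)$, which depends only on $\pi_1(M)$; since modifications of surfaces are compositions of point blow-ups and hence preserve the fundamental group, this number is a bimeromorphic invariant. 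The number $h^{2,0}_{(2)}=h^{2,0}_{(2),\overline{\partial}}$ is the top case already settled in part (1). The remaining and genuinely delicate case is $h^{1,0}_{(2)}$.

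Since bimeromorphic smooth surfaces are linked by a finite sequence of point blow-ups and blow-downs, it is enough to show that a single blow-up $\phi\colon M_1\to M_2$ at a point leaves $h^{1,0}_{(2)}$ unchanged; note that blow-ups preserve both the Kähler condition and $\pi_1$, so $\phi$ lifts to a $\Gamma$-equivariant modification $\tilde\phi\colon\tilde M_1\to\tilde M_2$ of the universal coverings. I would argue, in the spirit of Theorem \ref{nonsos} but in degree one instead of top degree, that the pullback $\tilde\phi^{*}$ is a $\Gamma$-equivariant isomorphism of Hilbert $\Gamma$-modules from $\mathcal H^{1,0}_2(\tilde M_2,\tilde h_2)$ onto $\mathcal H^{1,0}_2(\tilde M_1,\tilde h_1)$: pullback of a holomorphic $1$-form is holomorphic and injective, while surjectivity follows from Hartogs extension of holomorphic $1$-forms across the codimension-two center of the blow-up. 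The main obstacle is that, unlike for $(m,0)$-forms whose $L^2$ norm is metric-independent, the $L^2$ norm of a $1$-form depends on the metric, so $\tilde\phi^{*}$ is only a quasi-isometry. I would control this by splitting $\tilde M_1$ into the region away from the exceptional curves, where $\tilde h_1$ and $\tilde\phi^{*}\tilde h_2$ are uniformly comparable by $\Gamma$-cocompactness, and a neighborhood of the exceptional set, where the contribution to both norms is finite and two-sidedly bounded by interior sub-mean-value estimates for holomorphic forms together with the finiteness of the blow-up volume. This makes $\tilde\phi^{*}$ a bounded $\Gamma$-equivariant bijection with bounded inverse, hence an isomorphism preserving von Neumann dimension, giving $h^{1,0}_{(2)}(M_1)=h^{1,0}_{(2)}(M_2)$.

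For the final equivalence, I would exploit that both the ordinary Fr\"olicher identity \eqref{lamenca} and its $L^2$ counterpart \eqref{adriano} compute the same Euler characteristic $\chi(M)$, so that $\sum_{p,q}(-1)^{p+q}\bigl(h^{p,q}(M)-h^{p,q}_{(2)}(M)\bigr)=0$ for each of $M_1,M_2$. Isolating the $(1,1)$-term, whose sign is $+1$, gives $h^{1,1}(M)-h^{1,1}_{(2)}(M)=-\sum_{(p,q)\neq(1,1)}(-1)^{p+q}\bigl(h^{p,q}(M)-h^{p,q}_{(2)}(M)\bigr)$. Every summand on the right involves only $h^{0,0}$, $h^{1,0}$, $h^{2,0}$ and their conjugate and Serre-dual partners: the ordinary ones are classical bimeromorphic invariants because holomorphic forms extend over modifications, and the corresponding $L^2$ numbers were shown invariant above. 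Hence $h^{1,1}(M)-h^{1,1}_{(2)}(M)$ is itself a bimeromorphic invariant, so $h^{1,1}(M_1)-h^{1,1}(M_2)=h^{1,1}_{(2)}(M_1)-h^{1,1}_{(2)}(M_2)$, and one difference vanishes if and only if the other does, which is the asserted equivalence.
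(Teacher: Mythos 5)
Your proposal is correct in substance, and part (1) matches the paper (the paper's Cor.~\ref{daidai} likewise reduces a bimeromorphic map to modifications and applies Th.~\ref{nonso}); but for part (2) you take a genuinely different route at the crucial step. The paper never touches $\mathcal{H}^{1,0}_2$ directly: it gets the invariance of $h^{0,1}_{(2)}$ (hence of $h^{1,0}_{(2)}$ by conjugation) purely formally, from Atiyah's theorem in the form $\chi(M,\mathcal{O}_M)=h^{0,0}_{(2)}-h^{0,1}_{(2)}+h^{0,2}_{(2)}$, the bimeromorphic invariance of $\chi(M,\mathcal{O}_M)$, the vanishing $h^{0,0}_{(2)}=0$ (it assumes infinite $\pi_1$ here), and the already-proved invariance of $h^{0,2}_{(2)}=h^{2,0}_{(2)}$; and it deduces the $h^{1,1}$ equivalence from the two Hodge index theorems (classical and Th.~\ref{gammahodge}) via $\sigma(M)$, whereas you use the two Fr\"olicher identities via $\chi(M)$ --- both are equally valid since all other entries of the Hodge diamond are matched. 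Your alternative for $h^{1,0}_{(2)}$ --- factor the bimeromorphic map into point blow-ups and exhibit $\tilde{\phi}^{*}$ as a $\Gamma$-equivariant bijection between the spaces of $L^2$ holomorphic $1$-forms --- is workable (injectivity is clear, Hartogs gives the extension across the discrete fibre $Y$, and the uniform two-sided $L^2$ bounds near the exceptional orbit do follow from sub-mean-value estimates plus $\Gamma$-equivariance, with the ball-versus-annulus estimate needed to see that the extended form is $L^2$), but this is precisely the delicate analysis the paper's index-theoretic shortcut avoids entirely, and it additionally invokes the factorization theorem for surface bimeromorphic maps. What your route buys in exchange is a direct geometric identification of the $\Gamma$-modules themselves (not just their dimensions) and, via the $\pi_1$-invariance argument for $h^{0,0}_{(2)}$, a treatment that does not need the infinite-fundamental-group hypothesis that the paper's body proposition quietly adds.
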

Finally we conclude this introduction with the following remark. It seems reasonable to expect that these results are already known by the experts of this area. On the other hand we were not able to find them in the literature. So we have thought that  writing them down could provide a useful service to the mathematical community.
   
\vspace{1 cm}

\noindent\textbf{Acknowledgments.}  This work was performed within the framework of the LABEX MILYON
(ANR-10-LABX-0070) of Universit\'e de Lyon, within the program "Investissements d'Avenir" (ANR-11-IDEX-0007) operated by the French National Research Agency (ANR).

\section{Background material} 

\subsection{Generalities on $d$, $\overline{\partial}$ and the corresponding $L^2$-cohomologies } 
This section contains a brief summary concerning  some aspects  of the $L^2$-de Rham cohomology and the $L^2$-$\overline{\partial}$-cohomology. We refer to \cite{BruLe} and \cite{Luck} for a thorough discussion of this subject.
Let $(M,h)$ be a complex manifold of complex dimension $m$ endowed with a complete Hermitian metric $h$. In what follows we will label by $\overline{\partial}_{p,q}:\Omega^{p,q}(M)\rightarrow \Omega^{p,q+1}(M)$ the Dolbeault operator acting on $(p,q)$-forms  and with   
 $\overline{\partial}_{p,q}^t:\Omega^{p,q+1}(M)\rightarrow \Omega^{p,q}(M)$  the formal adjoint of $\overline{\partial}_{p,q}$ with respect to the metric $h$.  With $\Delta_{\overline{\partial},p,q}:\Omega^{p,q}(M)\rightarrow \Omega^{p,q}(M)$ we will label the Hodge-Kodaira Laplacian acting on $(p,q)$-forms while with $\overline{\partial}_p+\overline{\partial}_p^t$ we will label the Hodge-Dolbeault operator acting on $\Omega^{p,\bullet}(M)$. $\Delta_{\overline{\partial},p,q}$ is defined as $\Delta_{\overline{\partial},p,q}:= \overline{\partial}_{p,q}^t\circ \overline{\partial}_{p,q}+\overline{\partial}_{p,q-1}\circ \overline{\partial}_{p,q-1}^t$ while $\overline{\partial}_p+\overline{\partial}_p^t:\Omega^{p,\bullet}(M)\rightarrow \Omega^{p,\bullet}(M)$ is the operator given by $\Omega^{p,\bullet}(M):=\bigoplus_{q=0}^m\Omega^{p,q}(M)$ and $\overline{\partial}_p+\overline{\partial}_p^t|_{\Omega^{p,q}(M)}:=\overline{\partial}_{p,q}+\overline{\partial}_{p,q-1}^t$. Consider now the Hilbert space $L^2\Omega^{p,q}(M,h)$. This is the space given by measurable $(p,q)$-forms $\omega$ such that $\langle \omega,\omega\rangle_{L^2\Omega^{p,q}(M,h)}:=\int_Mh(\omega,\omega)\dvol_h<\infty$ where $\dvol_h$ is the volume form manufactured from $h$ and, with a little abuse of notations, we have still labeled by $h$ the Hermitian metric induced by $h$ on $\Lambda^{p,q}(M)$. A well known result says that $\Omega_c^{p,q}(M)$, the space of smooth $(p,q)$-forms with compact support, is dense in $L^2\Omega^{p,q}(M,h)$. Analogously to the case of $\Omega^{p,\bullet}(M)$ we define $L^2\Omega^{p,\bullet}(M,h):=\bigoplus_{q=0}^mL^2\Omega^{p,q}(M,h)$ and $\Omega^{p,\bullet}_c(M):=\bigoplus_{p=0}^m\Omega^{p,q}_c(M)$.  We consider now 
\begin{equation}
\label{primo}
\overline{\partial}_{p,q}:L^2\Omega^{p,q}(M,h)\rightarrow L^2\Omega^{p,q+1}(M,h)
\end{equation}
\begin{equation}
\label{secondo} \overline{\partial}_{p}+\overline{\partial}^t_p:L^2\Omega^{p,\bullet}(M,h)\rightarrow L^2\Omega^{p,\bullet}(M,h)
\end{equation} 
and 
\begin{equation}
\label{terzo}
\Delta_{\overline{\partial},p,q}:L^2\Omega^{p,q}(M,h)\rightarrow L^2\Omega^{p,q}(M,h)
\end{equation}
 as unbounded, closable and densely defined operators defined respectively on $\Omega^{p,q}_c(M)$, $\Omega^{p,\bullet}_c(M)$ and $\Omega^{p,q}_c(M)$. Another well known result tells us  that both \eqref{secondo} and \eqref{terzo} are essentially self-adjoint, see for instance \cite{RoS} or \cite{JW}.  This is actually equivalent to saying that \eqref{secondo} and \eqref{terzo} admit a unique closed extension. Moreover, using the fact that \eqref{secondo} and \eqref{terzo} are essentially self-adjoint, we get  that also \eqref{primo} admits a unique  closed extension, see \cite{BruLe}. Hence from now on with a little abuse of notation we will label with
\begin{equation}
\label{quarto}
\overline{\partial}_{p,q}:L^2\Omega^{p,q}(M,h)\rightarrow L^2\Omega^{p,q+1}(M,h)
\end{equation}
\begin{equation}
\label{quinto} \overline{\partial}_{p}+\overline{\partial}^t_p:L^2\Omega^{p,\bullet}(M,h)\rightarrow L^2\Omega^{p,\bullet}(M,h)
\end{equation} 
 \begin{equation}
\label{sesto}
\Delta_{\overline{\partial},p,q}:L^2\Omega^{p,q}(M,h)\rightarrow L^2\Omega^{p,q}(M,h)
\end{equation}
the unique closed extension of  \eqref{primo}, \eqref{secondo} and \eqref{terzo} respectively. We have an orthogonal decomposition of $L^2\Omega^{p,q}(M,h)$ given by 

\begin{equation}
\label{orto}
L^2\Omega^{p,q}(M,h)=\mathcal{H}^{p,q}_{2,\overline{\partial}}(M,h)\oplus\overline{\im(\overline{\partial}_{p,q})}\oplus\overline{\im({\overline{\partial}^t_{p,q}})}
\end{equation}
where $\mathcal{H}^{p,q}_{2,\overline{\partial}}(M,h)$ is defined as $\mathcal{H}^{p,q}_{2,\overline{\partial}}(M,h):=\ker(\overline{\partial}_{p,q})\cap \ker(\overline{\partial}_{p,q-1}^t)$. It is easy to check that $\mathcal{H}^{p,q}_{2,\overline{\partial}}(M,h)=\ker(\Delta_{\overline{\partial},p,q})$ and  $\overline{\im(\overline{\partial}_{p,q})}\oplus\overline{\im({\overline{\partial}^t_{p,q}})}=\overline{\im(\Delta_{\overline{\partial},p,q})}$.
The reduced $L^2$-Dolbeault cohomology is defined as the quotient
\begin{equation}
\label{redu}
\overline{H}^{p,q}_{2,\overline{\partial}}(M,h):=\frac{\ker(\overline{\partial}_{p,q})}{\overline{\im(\overline{\partial}_{p,q-1})}}.
\end{equation}
The vector space $\overline{H}^{p,q}_{2,\overline{\partial}}(M,h)$ can be infinite dimensional and in general it depends on the  metric $h$. On the other hand it is stable if $h$ is replaced with another metric $k$ which lies in the same quasi-isometry class of $h$, that is $\overline{H}^{p,q}_{2,\overline{\partial}}(M,h)=\overline{H}^{p,q}_{2,\overline{\partial}}(M,k)$ if  $c^{-1}h\leq k\leq ch$ for some constant $c>0$. It is immediate to check from \eqref{orto} that $\overline{H}^{p,q}_{2,\overline{\partial}}(M,h)\cong \mathcal{H}^{p,q}_{2,\overline{\partial}}(M,h)$. Moreover it is another straightforward verification to check that $\ker(\overline{\partial}_p+\overline{\partial}^t_p)=\bigoplus_{q=0}^m\mathcal{H}^{p,q}_{2,\overline{\partial}}(M,h)$. Finally if we replace $\overline{\partial}_{p,q}$ with $\partial_{p,q}$ then we have  analogous definitions and  properties for the operators $\partial_{p,q}:L^2\Omega^{p,q}(M,h)\rightarrow L^2\Omega^{p+1,q}(M,h)$,  $\partial_q+\partial_q^t:L^2\Omega^{\bullet, q}(M,h)\rightarrow L^2\Omega^{\bullet,q}(M,h)$, $\Delta_{\partial,p,q}:L^2\Omega^{p,q}(M,h)\rightarrow L^2\Omega^{p,q}(M,h)$ and the groups $\mathcal{H}^{p,q}_{\partial}(M,h):=\ker(\partial_{p,q})\cap \ker(\partial^t_{p,q-1})$ and $\overline{H}^{p,q}_{2,\partial}(M,h):=\ker(\partial_{p,q})/\overline{\im(\partial_{p-1,q})}$. Now we go on to consider the de Rham differential $d_k:\Omega^k(M)\rightarrow \Omega^{k+1}(M)$. In this case  we will only need to assume that $M$ is a smooth manifold and $h$ is a Riemannian metric on $M$. As in the previous case and with self-explanatory notations we have the formal adjoint of $d_k$ with respect to $h$, $d_k^t:\Omega^{k+1}(M)\rightarrow \Omega^k(M)$, the Hodge-de Rham operator $d+d^t:\Omega^{\bullet}(M)\rightarrow \Omega^{\bullet}(M)$ and the Hodge Laplacian $\Delta_k:\Omega^k(M)\rightarrow \Omega^k(M)$, $\Delta_k=d^t_k\circ d_k+d_{k-1}\circ d^t_{k-1}$. Also in this case we will consider
\begin{equation}
\label{gatto}
d_k:L^2\Omega^k(M,h)\rightarrow L^2\Omega^{k+1}(M,h)
\end{equation}
\begin{equation}
\label{matto} 
d+d^t:L^2\Omega^{\bullet}(M,h)\rightarrow L^2\Omega^{\bullet}(M,h)
\end{equation} 
 \begin{equation}
\label{vero}
\Delta_k:L^2\Omega^{k}(M,h)\rightarrow L^2\Omega^{k}(M,h)
\end{equation}
 as unbounded, closable and densely defined operators defined respectively on $\Omega^{k}_c(M)$, $\Omega^{\bullet}_c(M)$ and $\Omega^{k}_c(M)$ and, analogously to the previous case, we have that \eqref{matto} and \eqref{vero} are essentially self-adjoint and \eqref{gatto} admits a unique closed extension.  Clearly also \eqref{matto} and \eqref{vero} admit a unique closed extension as a consequence of the fact that they are essentially self-adjoint.  Therefore, with a little abuse of notation, from now on with
\begin{equation}
\label{gattox}
d_k:L^2\Omega^k(M,h)\rightarrow L^2\Omega^{k+1}(M,h)
\end{equation}
\begin{equation}
\label{mattox} 
d+d^t:L^2\Omega^{\bullet}(M,h)\rightarrow L^2\Omega^{\bullet}(M,h)
\end{equation} 
 \begin{equation}
\label{verox}
\Delta_k:L^2\Omega^{k}(M,h)\rightarrow L^2\Omega^{k}(M,h)
\end{equation}
we will label the unique closed extensions of \eqref{gatto}, \eqref{matto} and \eqref{vero} respectively. On the space $L^2\Omega^k(M,h)$ we have an orthogonal decomposition given by 
\begin{equation}
\label{ortox}
L^2\Omega^{k}(M,h)=\mathcal{H}^{k}_{2}(M,h)\oplus\overline{\im(d_k)}\oplus\overline{\im(d^t_k)}
\end{equation}
where $\mathcal{H}^{k}_{2}(M,h)$ is defined as $\mathcal{H}^{k}_{2}(M,h):=\ker(d_k)\cap \ker(d^t_{k-1})$. It is easy to check that $\mathcal{H}^{k}_{2}(M,h)=\ker(\Delta_k)$ and  $\overline{\im(d_k)}\oplus\overline{\im(d_k^t)}=\overline{\im(\Delta_k)}$.
Also in this case we define the  reduced $L^2$-de Rham cohomology  as the quotient
\begin{equation}
\label{redux}
\overline{H}^{k}_{2}(M,h):=\frac{\ker(d_k)}{\overline{\im(d_{k-1})}}.
\end{equation}
For the vector space $\overline{H}^{k}_{2}(M,h)$ hold analogous considerations to those made for \eqref{redu}. It can be infinite dimensional and in general it depends on the metric $h$. However if $k$ is another metric such that $c^{-1}h\leq k\leq ch$ for some constant $c>0$ then we have $\overline{H}^{k}_{2}(M,h)=\overline{H}^{k}_{2}(M,k)$.
It is immediate from \eqref{ortox} to check  that $\overline{H}^{k}_{2}(M,h)\cong \mathcal{H}^{k}_{2}(M,h)$ and that $\ker(d+d^t)=\bigoplus_{k=0}^m\mathcal{H}^{k}_{2}(M,h)$.\\
Finally we conclude this section with the following remarks.
In the general case of a complete  Hermitian manifold $(M,h)$ we have a $\mathbb{C}$-anti-linear isomorphism $\mathcal{H}^{p,q}_{2,\overline{\partial}}(M,h)\cong \mathcal{H}^{q,p}_{2,\partial}(M,h)$ induced by the conjugation and a $\mathbb{C}$-linear isomorphism $\mathcal{H}^{p,q}_{2,\overline{\partial}}(M,h)\cong \mathcal{H}^{m-q,m-p}_{2,\partial}(M,h)$ induced by the Hodge star operator. These two isomorphisms together induce the $\mathbb{C}$-anti-linear isomorphism $\mathcal{H}^{p,q}_{2,\overline{\partial}}(M,h)\cong \mathcal{H}^{m-p,m-q}_{2,\overline{\partial}}(M,h)$. Moreover the Hodge star operator induces also a $\mathbb{C}$-linear isomorphism between $\mathcal{H}^k_2(M,h)$ and $\mathcal{H}^{2m-k}_2(M,h)$. Furthermore if we assume that $h$ is K\"ahler then we have a decomposition $\mathcal{H}^{k}_{2}(M,h)= \bigoplus_{p+q=k}\mathcal{H}^{p,q}_{2,\overline{\partial}}(M,h)$ and  a $\mathbb{C}$-anti-linear isomorphism  $\mathcal{H}^{p,q}_{2,\overline{\partial}}(M,h)\cong \mathcal{H}^{q,p}_{2,\overline{\partial}}(M,h)$,  the latter as a consequence of the equality $\mathcal{H}^{p,q}_{2,\overline{\partial}}(M,h)=\mathcal{H}^{p,q}_{2,\partial}(M,h)$ and the $\mathbb{C}$-anti-linear isomorphism $\mathcal{H}^{p,q}_{2,\overline{\partial}}(M,h)\cong \mathcal{H}^{q,p}_{2,\partial}(M,h)$ induced by the conjugation. These properties  follow by the fact that in the K\"ahler case we have  $\Delta_k=2\Delta_{\overline{\partial},p,q}$  and $\Delta_{\overline{\partial},p,q}=\Delta_{\partial,p,q}$ on $\Omega_c^{p,q}(M)$. Therefore, as unbounded and  closed operators acting on $L^2\Omega^k(M,h)$ and on $L^2\Omega^{p,q}(M,h)$, we have $\Delta_k=2\bigoplus_{p+q=k}\Delta_{\overline{\partial},p,q}$ and  $\Delta_{\partial,p,q}=\Delta_{\overline{\partial},p,q}$ respectively. Hence, when $(M,h)$ is K\"ahler, we can simply write $\mathcal{H}^{p,q}_{2}(M,h)$ and $\overline{H}^{p,q}_2(M,h)$ instead of $\mathcal{H}^{p,q}_{2, \overline{\partial}}(M,h)$ and $\overline{H}^{p,q}_{2,\overline{\partial}}(M,h)$.

\subsection{Coverings and Von-Neumann dimension}
In this section we recall the definition of the $L^2$-Betti numbers and $L^2$-Hodge numbers. We introduce only what is strictly necessary for our purposes without any goal of completeness. For an in-depth treatment of this topic we refer to \cite{Luck}. We also invite the reader to consult the seminal paper of Atiyah \cite{MA}. Finally for a quick introduction we refer to \cite{MaMa} and to \cite{Roe}.\\
Let $(M,g)$ be a compact Riemannian  manifold of dimension $m$. Let $\pi:\tilde{M}\rightarrow M$ be a Galois covering of $M$ and let $\tilde{g}:=\pi^*g$ be the pull-back metric on $\tilde{M}$. Let $\Gamma$ be the group of deck transformations acting on $\tilde{M}$, $\Gamma\times \tilde{M}\rightarrow \tilde{M}$. We recall that  $\Gamma$ is a discrete group whose action on $\tilde{M}$ preserves the fibers of $\tilde{M}$, that is $\pi(\gamma(\tilde{p}))=\pi(\tilde{p})$ for each $\tilde{p}\in \pi^{-1}(p)$ and for each $\gamma\in \Gamma$. Moreover the action is transitive on all fibers and properly discontinuous and we have $\tilde{M}/\Gamma=M$. In what follows we will simply summarize this data by saying that $\pi:\tilde{M}\rightarrow M$ is a Galois $\Gamma$-covering. An open subset $A\subset \tilde{M}$ is a  fundamental domain of the action of $\Gamma$ on $\tilde{M}$ if
\begin{itemize}
\item $\tilde{M}=\bigcup_{\gamma\in \Gamma}\gamma(\overline{A})$, 
\item $\gamma_1(A)\cap\gamma_2(A)=\emptyset$ for every $\gamma_1, \gamma_2\in \Gamma$ with $\gamma_1\neq \gamma_2$,
\item $\overline{A}\setminus A$ has zero measure.
\end{itemize}   
It is not difficult to see that $L^2\Omega^{k}(\tilde{M},\tilde{g})\cong L^2\Gamma \otimes L^2\Omega^{k}(A,\tilde{g}|_A)\cong L^2\Gamma\otimes L^2\Omega^{k}(M,g)$ where a basis for $L^2\Gamma$ is given by the functions $\delta_{\gamma}$ with $\gamma\in \Gamma$ defined as $\delta_{\gamma}(\gamma')=1$ if $\gamma=\gamma'$ and $\delta_{\gamma}(\gamma')=0$ if $\gamma\neq \gamma'$. Moreover it is clear that $\Gamma$ acts on $L^2\Omega^k(\tilde{M},\tilde{g})$ by isometries.  Consider now a $\Gamma$-module $V\subset L^2\Omega^{k}(\tilde{M},\tilde{g})$, that is a closed subspace of $L^2\Omega^{k}(\tilde{M},\tilde{g})$ which is invariant under the action of $\Gamma$. If $\{\eta_j\}_{j\in \mathbb{N}}$ is an orthonormal basis for $V$ then the following quantity is finite $$\sum_{j\in \mathbb{N}}\int_A\tilde{g}(\eta_j,\eta_j)|_A\dvol_{\tilde{g}|_A}$$ and does not depend either on the choice of the orthonormal basis of $V$ or on the choice of the fundamental domain of the action of $\Gamma$ on $\tilde{M}$. Therefore if $B$ is another fundamental domain and $\{\alpha_j\}_{j\in \mathbb{N}}$ is another orthonormal basis for $V$ we have $\sum_{j\in \mathbb{N}}\int_A\tilde{g}(\eta_j,\eta_j)|_A\dvol_{\tilde{g}|_A}=\sum_{j\in \mathbb{N}}\int_B\tilde{g}(\alpha_j,\alpha_j)|_B\dvol_{\tilde{g}|_B}$. The Von-Neumann dimension of a $\Gamma$-module $V$ is therefore defined as $$\dim_{\Gamma}(V):=\sum_{j\in \mathbb{N}}\int_A\tilde{g}(\eta_j,\eta_j)|_A\dvol_{\tilde{g}|_A}$$ where $\{\eta_j\}_{j\in \mathbb{N}}$ is any orthonormal basis for $V$ and $A$ is any fundamental domain of the action of $\Gamma$ on $\tilde{M}$. The previous considerations show that the above definition is well posed. Since the Hodge Laplacian $\Delta_{k}:L^2\Omega^{k}(\tilde{M},\tilde{g})\rightarrow L^2\Omega^{k}(\tilde{M},\tilde{g})$ commutes with the action of $\Gamma$, a natural and important example of $\Gamma$-module is provided by the space of $L^2$ harmonic forms of degree $k$, $\mathcal{H}^{k}_{2}(\tilde{M},\tilde{g})$, for each $k=0,...,m$. Analogously if $(M,h)$ is a compact complex Hermitian manifold of complex dimension $m$, $\pi:\tilde{M}\rightarrow M$ is a Galois $\Gamma$-covering  and $\tilde{h}=\pi^*h$ is the pull-back metric on $\tilde{M}$ we have $L^2\Omega^{p,q}(\tilde{M},\tilde{h})\cong L^2\Gamma \otimes L^2\Omega^{p,q}(A,\tilde{h}|_A)\cong L^2\Gamma\otimes L^2\Omega^{p,q}(M,h)$ and besides the $\Gamma$-modules $\mathcal{H}^{k}_2(\tilde{M},\tilde{h})$ we have also the $\Gamma$-modules $\mathcal{H}^{p,q}_{2,\overline{\partial}}(\tilde{M},\tilde{h})$.\\ We recall now the definition of  the $L^2$-Betti numbers and, in the complex Hermitian setting, that  of $L^2$-Hodge numbers. The $L^2$-Betti numbers of a compact Riemannian manifold $(M,g)$ with respect to a Galois $\Gamma$-covering $\pi:\tilde{M}\rightarrow M$ endowed with the pull-back metric $\tilde{g}:=\pi^*g$ are defined as $$b^{(2)}_{k,\Gamma}(M):=\dim_{\Gamma}(\mathcal{H}_2^k(\tilde{M},\tilde{g}))$$ for $k=0,...,m$. The $L^2$-Betti numbers $b^{(2)}_{k,\Gamma}(M)$ are nonnegative real numbers and their value does not depend on the choice of the Riemannian metric on $M$. Likewise if $(M,h)$ is a compact complex Hermitian manifold of complex dimension $m$, $\pi:\tilde{M}\rightarrow M$ is a Galois $\Gamma$-covering and $\tilde{h}:=\pi^*h$ we have the $L^2$-Hodge numbers defined as $$h^{p,q}_{(2),\Gamma,\overline{\partial}}(M):=\dim_{\Gamma}(\mathcal{H}^{p,q}_{2,\overline{\partial}}(\tilde{M},\tilde{h})).$$ Also in this case $h^{p,q}_{(2),\Gamma,\overline{\partial}}(M)$ are nonnegative real numbers independent on the choice of the Hermitian metric on $M$. We recall now two deep results which are both a particular case of the celebrated Atiyah's $L^2$-index theorem, see \cite{MA}. For the first theorem below  see \cite{MA} pag. 71. The second one, although not explicitly stated in \cite{MA}, follows by using, in setting of Hermitian manifolds, the same strategy used to derive Th. \ref{gamma1} in that of Riemannian manifolds. 

\begin{teo}
\label{gamma1}
Let $(M,g)$ be a compact Riemannian manifold of  dimension $m$. Let $\pi:\tilde{M}\rightarrow M$ be a Galois $\Gamma$-covering of $M$ and let $\tilde{g}:=\pi^{*}g$ be the pull-back metric on $\tilde{M}$. Then we have $$\sum_{q=0}^m(-1)^qb_k(M)=\sum_{q=0}^m(-1)^qb_{k,\Gamma}^{(2)}(M,g).$$
\end{teo}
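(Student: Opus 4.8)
The plan is to express both alternating sums as heat--trace quantities and to compare them via the short--time behaviour of the heat kernel, using crucially that $\pi:\tilde M\to M$ is a local isometry because $\tilde g=\pi^*g$. (I prove the intended identity $\sum_{k=0}^m(-1)^k b_k(M)=\sum_{k=0}^m(-1)^k b^{(2)}_{k,\Gamma}(M,g)$, i.e.\ the equality of the Euler characteristic with the $L^2$--Euler characteristic.)

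First I would recast the two sides through Hodge theory. By the classical Hodge theorem $b_k(M)=\dim\mathcal{H}^k_2(M,g)$, and by definition $b^{(2)}_{k,\Gamma}(M)=\dim_{\Gamma}\mathcal{H}^k_2(\tilde M,\tilde g)$; both rest on the orthogonal decomposition \eqref{ortox}, applied on $M$ and on $\tilde M$ respectively. The starting point is then the McKean--Singer identity in its $\Gamma$--equivariant form. On $M$, for every $t>0$,
\begin{equation}
\sum_{k=0}^{m}(-1)^k\dim\mathcal{H}^k_2(M,g)=\sum_{k=0}^m(-1)^k\tr(e^{-t\Delta_k}),
\end{equation}
the contributions of the positive part of the spectrum cancelling because $d_k$ and $d^t_{k-1}$ identify the nonzero eigenspaces of $\Delta_k$ in consecutive degrees. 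The very same cancellation holds on $\tilde M$ with the ordinary trace replaced by the $\Gamma$--trace $\tr_{\Gamma}(e^{-t\tilde\Delta_k}):=\int_A\tr e^{-t\tilde\Delta_k}(\tilde x,\tilde x)\,\dvol_{\tilde g}$, since $d$ and $d^t$ commute with the $\Gamma$--action and the $\Gamma$--trace enjoys the trace property; this yields, for every $t>0$,
\begin{equation}
\sum_{k=0}^m(-1)^k b^{(2)}_{k,\Gamma}(M)=\sum_{k=0}^m(-1)^k\tr_{\Gamma}(e^{-t\tilde\Delta_k}).
\end{equation}

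The comparison is then made by letting $t\to 0^+$. The on--diagonal heat kernel admits a short--time asymptotic expansion $\tr e^{-t\Delta_k}(x,x)\sim\sum_{j\ge 0}t^{(j-m)/2}a^{(k)}_j(x)$, whose coefficients $a^{(k)}_j(x)$ are universal local invariants of the metric and its curvature. Because $\pi$ is a local isometry, the corresponding expansion on $\tilde M$ has coefficients $a^{(k)}_j(\pi(\tilde x))$. Integrating the alternating sums, the left--hand sides above are constant in $t$, so only the $j=m$ coefficient can survive, giving $\chi(M)=\sum_k(-1)^k\int_M a^{(k)}_m\,\dvol_g$ and $\sum_k(-1)^k b^{(2)}_{k,\Gamma}(M)=\sum_k(-1)^k\int_A a^{(k)}_m\circ\pi\,\dvol_{\tilde g}$. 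Since $A$ is a fundamental domain and $\pi|_A$ is an isometry onto a full--measure subset of $M$, the latter integral equals $\sum_k(-1)^k\int_M a^{(k)}_m\,\dvol_g$, so the two alternating sums coincide. (This common value is the Gauss--Bonnet--Chern integrand, but that identification is not needed.)

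I expect the main technical burden to be the justification of the $\Gamma$--trace formalism on the noncompact covering: that each $e^{-t\tilde\Delta_k}$ has a smooth Schwartz kernel, is $\Gamma$--trace class, and that the $\Gamma$--supertrace is independent of $t$ (the $L^2$ McKean--Singer formula). All of this, together with the uniformity of the short--time expansion needed to extract coefficients term by term, follows from the fact that $(\tilde M,\tilde g)$ has bounded geometry, being the pullback to a Galois covering of a compact manifold; bounded geometry supplies the Gaussian off--diagonal heat kernel bounds and the locally uniform asymptotics that make the $\Gamma$--trace finite and the passage $t\to 0^+$ legitimate. The remaining inputs---the spectral cancellation underlying McKean--Singer and the locality of the coefficients $a^{(k)}_j$---are purely local and hence transfer verbatim from $M$ to $\tilde M$ through the isometry $\pi$.
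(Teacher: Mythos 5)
The paper offers no proof of this statement at all: it is quoted verbatim from Atiyah's paper (the reference given is \cite{MA}, p.~71) as a special case of the $L^2$-index theorem, so the only "proof" on record is that citation. Your argument is precisely Atiyah's original heat-equation proof of that cited result --- the $\Gamma$-equivariant McKean--Singer identity, $\Gamma$-trace-class heat operators on the cover, and the locality of the short-time expansion combined with the fact that $\pi$ is a local isometry and $A$ a fundamental domain --- so it is correct (modulo the standard technical points you yourself flag) and coincides with the proof the paper implicitly relies on; you also correctly read the misprinted indices in the displayed formula as the alternating sum over the degree $k$.
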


\begin{teo}
\label{gamma2}
Let $(M,h)$ be a compact complex Hermitian manifold of complex dimension $m$. Let $\pi:\tilde{M}\rightarrow M$ be a Galois $\Gamma$-covering of $M$ and let $\tilde{h}:=\pi^{*}h$ be the pull-back metric on $\tilde{M}$. Then for any fixed $p\in \{0,...,m\}$ we have $$\sum_{q=0}^m(-1)^qh_{\overline{\partial}}^{p,q}(M)=\sum_{q=0}^m(-1)^qh_{(2),\Gamma,\overline{\partial}}^{p,q}(M,h).$$
\end{teo}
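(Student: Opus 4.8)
The plan is to realize both sides of the claimed identity as the index of one and the same elliptic operator --- the Hodge--Dolbeault operator $\overline{\partial}_p+\overline{\partial}_p^t$ with the holomorphic degree $p$ held fixed --- computed once on the compact base $M$ and once, in the Von Neumann sense, on the Galois covering $\tilde{M}$, and then to invoke Atiyah's $L^2$-index theorem exactly as in the proof of Th. \ref{gamma1}. To this end, fix $p$ and consider the elliptic complex
$$0\to \Omega^{p,0}(M)\xrightarrow{\overline{\partial}}\Omega^{p,1}(M)\xrightarrow{\overline{\partial}}\cdots\xrightarrow{\overline{\partial}}\Omega^{p,m}(M)\to 0,$$
whose cohomology groups are the Dolbeault groups $H^{p,q}_{\overline{\partial}}(M)$ of dimension $h^{p,q}_{\overline{\partial}}(M)$. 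Rolling it up with the $\mathbb{Z}/2$-grading given by the parity of $q$ produces the operator $\overline{\partial}_p+\overline{\partial}_p^t$ of \eqref{quinto}, restricted to map the even part of $\Omega^{p,\bullet}(M)$ to the odd part. This restriction is an elliptic operator on the compact manifold $M$, and by the standard Hodge theory its index equals $\sum_{q=0}^m(-1)^q h^{p,q}_{\overline{\partial}}(M)$, the left-hand side of the statement.

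Next I would pass to the covering. Since $\tilde{h}=\pi^*h$, the local geometry of $(\tilde{M},\tilde{h})$ is isometric to that of $(M,h)$ fibrewise, and the lift of $\overline{\partial}_p+\overline{\partial}_p^t$ is precisely the operator \eqref{quinto} on $(\tilde{M},\tilde{h})$, which commutes with the action of $\Gamma$ by isometries. Because $(\tilde{M},\tilde{h})$ is complete, being a covering of a compact manifold, the essential self-adjointness recalled after \eqref{secondo}--\eqref{terzo} applies, so the Kodaira decomposition \eqref{orto} holds for each $(p,q)$ and the reduced $L^2$-Dolbeault cohomology is represented by the $\Gamma$-module $\mathcal{H}^{p,q}_{2,\overline{\partial}}(\tilde{M},\tilde{h})$. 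The $\Gamma$-index of the lifted even-to-odd operator is then, by definition, $\sum_{q=0}^m(-1)^q\dim_{\Gamma}\mathcal{H}^{p,q}_{2,\overline{\partial}}(\tilde{M},\tilde{h})=\sum_{q=0}^m(-1)^q h^{p,q}_{(2),\Gamma,\overline{\partial}}(M,h)$, the right-hand side.

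It remains to equate the two indices, which is the content of Atiyah's $L^2$-index theorem for the elliptic complex above. Concretely I would run the McKean--Singer argument in both settings. Downstairs the super-trace $\Tr(e^{-tD^-D^+})-\Tr(e^{-tD^+D^-})$ is independent of $t$ and computes $\sum_{q}(-1)^q h^{p,q}_{\overline{\partial}}(M)$, while upstairs the $\Gamma$-super-trace $\Tr_{\Gamma}(e^{-t\tilde{D}^-\tilde{D}^+})-\Tr_{\Gamma}(e^{-t\tilde{D}^+\tilde{D}^-})$ is likewise $t$-independent and computes the alternating sum of the Von Neumann dimensions. The key point --- exactly where Atiyah's theorem enters --- is that these two super-traces coincide for every $t>0$: the $\Gamma$-trace of a lifted heat operator is the integral over a fundamental domain $A$ of the pointwise heat-kernel trace, and since this pointwise trace is a local quantity determined by the isometric local geometry, it agrees with the pointwise heat trace on $M$; integrating over $A$ then reproduces the integral over $M$. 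Letting $t\to\infty$ on both sides yields the asserted equality.

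The step I expect to be the main obstacle is this last one, namely the rigorous identification of the $\Gamma$-heat super-trace with the ordinary one. It requires off-diagonal Gaussian heat-kernel estimates on the complete manifold $(\tilde{M},\tilde{h})$ to justify the trace-class statements and to define $\Tr_{\Gamma}(e^{-t\tilde{\Delta}_{\overline{\partial},p,q}})$ via a fundamental domain, together with the $\Gamma$-equivariance of the heat kernel. All of these are standard in the bounded-geometry setting of a covering of a compact manifold and are precisely the analytic ingredients already used to establish Th. \ref{gamma1}; the only adaptation is to replace the de Rham complex by the $\overline{\partial}$-complex with $p$ fixed, which leaves the analytic arguments unchanged.
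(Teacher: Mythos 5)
Your proposal is correct and follows exactly the route the paper intends: the paper gives no detailed argument for Th. \ref{gamma2}, stating only that it follows from Atiyah's $L^2$-index theorem by applying, to the $\overline{\partial}$-complex with $p$ fixed, the same heat-kernel/$\Gamma$-trace strategy used for Th. \ref{gamma1}, which is precisely what you carry out. The McKean--Singer comparison of the super-trace on $M$ with the $\Gamma$-super-trace over a fundamental domain is the standard content of Atiyah's theorem in the bounded-geometry covering setting, so no genuinely new analytic input is needed.
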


\noindent We add now some observations. Let $(M,g)$ be a compact Riemannian manifold with infinite fundamental group. Let $\pi:\tilde{M}\rightarrow M$ be a Galois $\Gamma$-covering with $\Gamma$ infinite too. Then we have $b^{(2)}_{0,\Gamma}(M)=0$. Indeed let $p$ be any point in $M$. Let $U$ be a sufficiently small open neighborhood of $p$ such that $\pi^{-1}(U)=\cup_{n\in\mathbb{N}}U_n$, $U_i\cap U_j=\emptyset$ for $i\neq j$ and $\pi|_{U_n}:U_n\rightarrow U$ is a diffeomorphism. This in turn implies that  $\pi|_{U_n}:(U_n,\tilde{g}|_{U_n})\rightarrow (U,g|_U)$ is an isometry and therefore we have $\vol_{\tilde{g}}(\tilde{M})\geq \sum_{n\in \mathbb{N}}\vol_{\tilde{g}}(U_n)=\sum_{n\in \mathbb{N}}\vol_{g}(U)=\infty$. Hence $\vol_{\tilde{g}}(\tilde{M})=\infty$. It is now a classical result of global analysis that the only $L^2$-harmonic function on a complete Riemannian manifold of infinite volume is $0$, see for instance \cite{Dod} Th. 1 or \cite{RoS} pag. 59. In conclusion $\mathcal{H}^0_2(\tilde{M},\tilde{g})=0$ and so $b^{(2)}_{0,\Gamma}(M)=0$. Consider now the case of  a compact complex Hermitian manifold $(M,h)$ of complex dimension $m$. Let $\pi:\tilde{M}\rightarrow M$ be a Galois $\Gamma$-covering of $M$ and let $\tilde{h}:=\pi^*h$. As recalled in the previous section we have $\mathbb{C}$-linear isomorphism $\mathcal{H}^k_2(\tilde{M},\tilde{h})\cong \mathcal{H}^{2m-k}_2(\tilde{M},\tilde{h})$ induced by the Hodge star operator and a $\mathbb{C}$-anti-linear isomorphism $\mathcal{H}_{2,\overline{\partial}}^{p,q}(\tilde{M},\tilde{h})\cong \mathcal{H}_{2,\overline{\partial}}^{m-p,m-q}(\tilde{M},\tilde{h})$ obtained by taking the composition of the Hodge star operator and the conjugation. Hence at the level of $L^2$-Betti numbers and  $L^2$-Hodge numbers we get the following equalities:
\begin{equation}
\label{hhh}
b^{(2)}_{k,\Gamma}(M)=b_{2m-k,\Gamma}^{(2)}(M),\ h^{p,q}_{(2),\overline{\partial},\Gamma}(M)=h^{m-p,m-q}_{(2),\overline{\partial},\Gamma}(M).
\end{equation}
When $(M,h)$ is K\"ahler, we have $\mathcal{H}^{p,q}_{2,\overline{\partial}}(\tilde{M},\tilde{h})=\mathcal{H}^{p,q}_{2,\partial}(\tilde{M},\tilde{h})$, $\mathcal{H}^{k}_{2}(\tilde{M},\tilde{h})= \bigoplus_{p+q=k}\mathcal{H}^{p,q}_{2,\overline{\partial}}(\tilde{M},\tilde{h})$ and a $\mathbb{C}$-anti-linear isomorphism $\mathcal{H}^{p,q}_{2,\overline{\partial}}(\tilde{M},\tilde{h})\cong \mathcal{H}^{q,p}_{2,\overline{\partial}}(\tilde{M},\tilde{h})$ that follows by the equality $\mathcal{H}^{p,q}_{2,\overline{\partial}}(\tilde{M},\tilde{h})=\mathcal{H}^{p,q}_{2,\partial}(\tilde{M},\tilde{h})$ and the $\mathbb{C}$-anti-linear isomorphism $\mathcal{H}^{p,q}_{2,\overline{\partial}}(\tilde{M},\tilde{h})\cong \mathcal{H}^{q,p}_{2,\partial}(\tilde{M},\tilde{h})$  induced by the conjugation. Therefore in the K\"ahler setting we can simply write $h^{p,q}_{(2),\Gamma}(M)$ for the $L^2$-Hodge numbers of $(M,h)$ and, in addition to \eqref{hhh}, we have the following properties:
\begin{equation}
\label{please}
b_{k,\Gamma}^{(2)}(M)=\bigoplus_{p+q=k}h^{p,q}_{(2),\Gamma}(M),\ h^{p,q}_{(2),\Gamma}(M)=h^{q,p}_{(2),\Gamma}(M).
\end{equation}
We conclude  this section with the following remark about the notation: if $\pi:\tilde{M}\rightarrow M$ is the universal covering of $M$ then we will simply write $b^{(2)}_k(M)$ instead of $b^{(2)}_{k,\Gamma}(M)$ for the $L^2$-Betti numbers of $M$ with respect to $\pi:\tilde{M}\rightarrow M$. Analogously if $\pi:\tilde{M}\rightarrow M$ is the universal covering of a compact complex manifold $M$ then we will write $h^{p,q}_{(2),\overline{\partial}}(M)$ instead of $h^{p,q}_{(2),\Gamma, \overline{\partial}}(M)$ for the $L^2$-Hodge numbers of $M$ with respect to $\pi:\tilde{M}\rightarrow M$. Finally if $M$ admits a K\"ahler metric then we will simply write $h^{p,q}_{(2)}(M)$ for the $L^2$-Hodge numbers of $M$ with respect to its universal covering  $\pi:\tilde{M}\rightarrow M$.

\section{Hodge and Fr\"olicher index theorems via Von Neumann dimension}
This section contains two of the main results of this paper. In the first theorem we provide  a reformulation of the classical Hodge index theorem  by replacing the Hodge numbers with  the $L^2$-Hodge numbers. Similarly the second result  is a reformulation of the classical Fr\"olicher index theorem made again by using the $L^2$-Hodge numbers.

\begin{teo}
\label{gammahodge}
Let $(M,h)$ be a compact K\"ahler manifold of complex dimension $m=2n$. Let $\pi:\tilde{M}\rightarrow M$ be a Galois $\Gamma$-covering of $M$ and let $\tilde{h}:=\pi^*h$ be the pull-back metric on $\tilde{M}$. Then $$\sigma(M)=\sum_{p,q=0}^{m}(-1)^ph_{(2),\Gamma}^{p,q}(M).$$
\end{teo}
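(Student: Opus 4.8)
The plan is to deduce the identity from the classical Hodge index theorem \eqref{ealora} combined with the $L^2$-Fr\"olicher-type relation of Theorem \ref{gamma2}, using the K\"ahler symmetries of the $L^2$-Hodge numbers to reconcile the $(-1)^p$ weighting in the signature formula with the $(-1)^q$ weighting appearing in Theorem \ref{gamma2}. No genuinely new analytic input is needed: the essential self-adjointness, the $L^2$-Hodge decomposition and Atiyah's $L^2$-index theorem that make all the quantities meaningful are already in place, so the whole argument is a matter of organizing the sign bookkeeping.

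First I would rewrite the target sum so that the outer alternating weight is carried by $q$ rather than $p$. Since $(M,h)$ is K\"ahler we may write $h^{p,q}_{(2),\Gamma}(M)=h^{p,q}_{(2),\Gamma,\overline{\partial}}(M)$, and these numbers satisfy the conjugation symmetry $h^{p,q}_{(2),\Gamma}(M)=h^{q,p}_{(2),\Gamma}(M)$ recorded in \eqref{please}. Relabelling the dummy indices $p\leftrightarrow q$ and then applying this symmetry gives
$$\sum_{p,q=0}^{m}(-1)^p h^{p,q}_{(2),\Gamma}(M)=\sum_{p,q=0}^{m}(-1)^q h^{q,p}_{(2),\Gamma}(M)=\sum_{p,q=0}^{m}(-1)^q h^{p,q}_{(2),\Gamma}(M).$$

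Next I would apply Theorem \ref{gamma2} for each fixed $p$. That theorem yields $\sum_{q}(-1)^q h^{p,q}_{(2),\Gamma}(M)=\sum_{q}(-1)^q h^{p,q}_{\overline{\partial}}(M)$ for every $p$, so summing over $p$ converts the $L^2$-quantity into the corresponding alternating sum of ordinary Hodge numbers:
$$\sum_{p,q=0}^{m}(-1)^q h^{p,q}_{(2),\Gamma}(M)=\sum_{p,q=0}^{m}(-1)^q h^{p,q}_{\overline{\partial}}(M).$$

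Finally I would run the same bookkeeping on the classical side. The ordinary K\"ahler symmetry $h^{p,q}_{\overline{\partial}}(M)=h^{q,p}_{\overline{\partial}}(M)$, via the identical relabelling, shows that $\sum_{p,q}(-1)^q h^{p,q}_{\overline{\partial}}(M)=\sum_{p,q}(-1)^p h^{p,q}_{\overline{\partial}}(M)$, and the latter is exactly the right-hand side of the classical Hodge index theorem \eqref{ealora}, hence equal to $\sigma(M)$. Chaining the three equalities proves the claim. The only step requiring care is precisely this sign bookkeeping: at each stage one must track whether $p$ or $q$ carries the alternating weight and invoke the appropriate symmetry (conjugation for the $L^2$-Hodge numbers, conjugation for the ordinary ones) to pass between the two. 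The hypothesis $m=2n$ plays no analytic role in the argument itself; it enters only through the requirement that $\sigma(M)$ be defined and that \eqref{ealora} be available.
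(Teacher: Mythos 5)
Your proposal is correct and follows essentially the same route as the paper: both arguments combine the classical Hodge index theorem with Theorem \ref{gamma2} applied for each fixed $p$, using the conjugation symmetries $h^{p,q}=h^{q,p}$ and $h^{p,q}_{(2),\Gamma}=h^{q,p}_{(2),\Gamma}$ to move the alternating weight between $p$ and $q$. The only difference is that you run the chain of equalities from the $L^2$ side back to $\sigma(M)$, whereas the paper starts from $\sigma(M)$; the content is identical.
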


\begin{proof}
Consider the well known Hodge index theorem:$$\sigma(M)=\sum_{p,q=0}^{m}(-1)^ph^{p,q}(M).$$ We can rewrite the above formula as 
\begin{align}
 \sigma(M)= & h^{0,0}(M)-h^{1,0}(M)+h^{2,0}(M)-....+h^{m,0}(M)+\\
\nonumber & h^{0,1}(M)-h^{1,1}(M)+h^{2,1}(M)-....+h^{m,1}(M)+...\\
\nonumber &...+h^{0,m}(M)-h^{1,m}(M)+h^{2,m}(M)-....+h^{m,m}(M)=
\end{align}
using the fact that $h^{p,q}(M)=h^{q,p}(M)$ since  $(M,h)$ is K\"ahler,
\begin{align}
\nonumber & h^{0,0}(M)-h^{0,1}(M)+h^{0,2}(M)-....+h^{0,m}(M)+\\
\nonumber & h^{1,0}(M)-h^{1,1}(M)+h^{1,2}(M)-....+h^{1,m}(M)+...\\
\nonumber &...+h^{m,0}(M)-h^{m,1}(M)+h^{m,2}(M)-....+h^{m,m}(M)
\end{align}
that is $$\sigma(M)=\sum_{q=0}^{m}(-1)^qh^{0,q}(M)+\sum_{q=0}^{m}(-1)^qh^{1,q}(M)+...+\sum_{q=0}^{m}(-1)^qh^{m,q}(M).$$ Using Th. \ref{gamma2} we have $$\sigma(M)=\sum_{q=0}^{m}(-1)^qh_{(2),\Gamma}^{0,q}(M)+\sum_{q=0}^{m}(-1)^qh_{(2),\Gamma}^{1,q}(M)+...+\sum_{q=0}^{m}(-1)^qh^{m,q}_{(2),\Gamma}(M)$$
and by the fact that $h^{p,q}_{(2),\Gamma}(M)=h^{q,p}_{(2),\Gamma}(M)$ we get that  $$\sigma(M)=\sum_{p=0}^{m}(-1)^ph_{(2),\Gamma}^{p,0}(M)+\sum_{p=0}^{m}(-1)^ph^{p,1}_{(2),\Gamma}(M)+...+\sum_{p=0}^{m}(-1)^ph^{p,m}_{(2),\Gamma}(M)$$ that we can rewrite as   $$\sigma(M)=\sum_{p,q=0}^{m}(-1)^ph^{p,q}_{(2),\Gamma}(M).$$ The proof is thus complete.
\end{proof}

We come now to the other result of this section.

\begin{teo}
\label{gammafrol}
Let $(M,h)$ be a compact complex Hermitian manifold of complex dimension $m$. Let $\pi:\tilde{M}\rightarrow M$ be a Galois $\Gamma$-covering of $M$ and let $\tilde{h}:=\pi^*h$ be the pull-back metric on $\tilde{M}$. Then $$\chi(M)=\sum_{p,q=0}^{m}(-1)^{p+q}h_{(2),\Gamma,\overline{\partial}}^{p,q}(M).$$
\end{teo}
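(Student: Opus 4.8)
The plan is to reduce this statement to the classical Fr\"olicher index theorem \eqref{lamenca} together with Th. \ref{gamma2}, following the same strategy used in the proof of Th. \ref{gammahodge}. Concretely, I would start from the known identity $\chi(M)=\sum_{p,q=0}^{m}(-1)^{p+q}h^{p,q}_{\overline{\partial}}(M)$ and observe that, since $(-1)^{p+q}=(-1)^p(-1)^q$, the double sum factors as an outer sum over $p$ of an alternating inner sum over $q$:
\begin{equation}
\chi(M)=\sum_{p=0}^{m}(-1)^p\left(\sum_{q=0}^m(-1)^q h^{p,q}_{\overline{\partial}}(M)\right).
\end{equation}

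The key step is then to apply Th. \ref{gamma2} to each fixed $p\in\{0,\dots,m\}$. That theorem asserts precisely that the inner alternating sum of ordinary Hodge numbers agrees with the corresponding alternating sum of $L^2$-Hodge numbers, namely $\sum_{q=0}^m(-1)^q h^{p,q}_{\overline{\partial}}(M)=\sum_{q=0}^m(-1)^q h^{p,q}_{(2),\Gamma,\overline{\partial}}(M)$. Substituting this equality into each summand and reassembling the outer sum over $p$ gives
\begin{equation}
\chi(M)=\sum_{p=0}^{m}(-1)^p\left(\sum_{q=0}^m(-1)^q h^{p,q}_{(2),\Gamma,\overline{\partial}}(M)\right)=\sum_{p,q=0}^{m}(-1)^{p+q}h^{p,q}_{(2),\Gamma,\overline{\partial}}(M),
\end{equation}
which is the desired formula. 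Note that here, unlike in Th. \ref{gammahodge}, no K\"ahler hypothesis is needed: we never invoke the symmetry $h^{p,q}=h^{q,p}$, only the factorization of the sign $(-1)^{p+q}$ and the per-$p$ equality supplied by Th. \ref{gamma2}, so the argument works for an arbitrary compact complex Hermitian manifold.

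The real content of the proof is entirely absorbed into Th. \ref{gamma2}, which is the $L^2$-index computation for the Hodge-Dolbeault operator $\overline{\partial}_p+\overline{\partial}_p^t$ acting on $\Omega^{p,\bullet}$; this is where Atiyah's $L^2$-index theorem enters, identifying the alternating sum of $\Gamma$-dimensions of the $L^2$-harmonic spaces on $\tilde{M}$ with the ordinary index downstairs on $M$. Consequently I expect no genuine obstacle in the remaining argument: it is pure bookkeeping of signs and the interchange of the two finite summations, both of which are elementary. The only points requiring care are to keep the roles of $p$ and $q$ straight when factoring $(-1)^{p+q}$ and to apply Th. \ref{gamma2} with $p$ held fixed in the correct index slot.
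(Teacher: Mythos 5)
Your proposal is correct and follows essentially the same route as the paper: the paper's proof also starts from the classical Fr\"olicher identity, regroups the double sum as $\sum_{p}(-1)^p\bigl(\sum_q(-1)^qh^{p,q}_{\overline{\partial}}(M)\bigr)$, and applies Th. \ref{gamma2} to the inner sum for each fixed $p$. Your remark that no K\"ahler hypothesis (and hence no use of $h^{p,q}=h^{q,p}$) is needed here, in contrast to Th. \ref{gammahodge}, is also consistent with the paper.
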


\begin{proof}
Consider the well known Fr\"olicher theorem:$$\chi(M)=\sum_{p,q=0}^{m}(-1)^{p+q}h_{\overline{\partial}}^{p,q}(M).$$ We can rewrite the above formula as 
\begin{align}
 \chi(M)= & h^{0,0}_{\overline{\partial}}(M)-h^{0,1}_{\overline{\partial}}(M)+h^{0,2}_{\overline{\partial}}(M)-....+(-1)^mh^{0,m}_{\overline{\partial}}(M)+\\
\nonumber & -h^{1,0}_{\overline{\partial}}(M)+h^{1,1}_{\overline{\partial}}(M)-h^{1,2}_{\overline{\partial}}(M)+....+(-1)^{m+1}h^{1,m}_{\overline{\partial}}(M)+...\\
\nonumber &...+(-1)^mh^{m,0}_{\overline{\partial}}(M)+(-1)^{m+1}h^{m,1}_{\overline{\partial}}(M)+(-1)^{m+2}h^{m,2}_{\overline{\partial}}(M)+....+h^{m,m}_{\overline{\partial}}(M)
\end{align}
that is $$\chi(M)=\sum_{q=0}^{m}(-1)^qh^{0,q}_{\overline{\partial}}(M)-\sum_{q=0}^{m}(-1)^qh^{1,q}_{\overline{\partial}}(M)+\sum_{q=0}^{m}(-1)^qh^{2,q}_{\overline{\partial}}(M)+...+(-1)^m\sum_{q=0}^{m}(-1)^qh^{m,q}_{\overline{\partial}}(M).$$ Using Th. \ref{gamma2} we have $$\chi(M)=\sum_{q=0}^{m}(-1)^qh^{0,q}_{(2),\Gamma,\overline{\partial}}(M)-\sum_{q=0}^{m}(-1)^qh^{1,q}_{(2),\Gamma,\overline{\partial}}(M)+\sum_{q=0}^{m}(-1)^qh^{2,q}_{(2),\Gamma,\overline{\partial}}(M)+...+(-1)^m\sum_{q=0}^{m}(-1)^qh^{m,q}_{(2),\Gamma,\overline{\partial}}(M)$$
that we can rewrite as   $$\chi(M)=\sum_{p,q=0}^{m}(-1)^{p+q}h^{p,q}_{(2),\Gamma,\overline{\partial}}(M).$$ The proof is thus complete.
\end{proof}

\section{Applications}
This section is concerned with some applications of Th. \ref{gammahodge} and Th. \ref{gammafrol}. We are mainly interested in the sign of the Euler characteristic of compact non-positively curved K\"ahler manifolds. This is a particular case of an old problem that goes back to Hopf, see \cite{Luck} and the references cited there. Indeed   Hopf conjectured that for a compact, even dimensional Riemannian manifold $(M,g)$ it holds $(-1)^m\chi(M)\geq 0$ if $\sec_g(M)\leq 0$ and $(-1)^m\chi(M)>0$ if $\sec_g(M)<0$ where $2m=\dim(M)$ and $\sec_g(M)$ are the sectional curvatures of $(M,g)$. This conjecture, which is still open, has been completely settled in the setting of K\"ahler manifolds. More precisely Gromov proved in \cite{Gromov} that $\sec_g(M)<0$ implies $(-1)^m\chi(M)>0$ and Cao-Xavier in \cite{CaoXa} and Jost-Zuo in \cite{Jost} showed that $(-1)^m\chi(M)\geq 0$ when  $\sec_g(M)\leq 0$.  We recall now some of the main definitions and results from \cite{CaoXa}, \cite{Gromov} and \cite{Jost}.

\begin{defi}
Let $(M,h)$ be a compact K\"ahler manifold and let $\omega$ be the corresponding $(1,1)$-form. Let $\pi:\tilde{M}\rightarrow M$ be the universal covering of $M$. Let $\tilde{h}:=\pi^*h$ and $\tilde{\omega}:=\pi^*\omega$. Then $(M,h)$ is said K\"ahler hyperbolic if $\tilde{\omega}$ is $d$-bounded, that is if there exists a $1$-form $\eta\in \Omega^1(\tilde{M})\cap L^{\infty}\Omega^1(\tilde{M},\tilde{h})$ such that $d\eta=\tilde{\omega}$.
\end{defi}

In \cite{Gromov} Gromov proved the following:

\begin{teo}
\label{hyper}
Let $(M,h)$ be a compact K\"ahler manifold of complex dimension $m$:
\begin{itemize}
\item assume that   $(M,h)$ is K\"ahler hyperbolic.  Let $\pi:\tilde{M}\rightarrow M$ be the universal covering of $M$ and let $\tilde{h}:=\pi^*h$. Then $\mathcal{H}_2^{p,q}(\tilde{M},\tilde{h})=0$ if $p+q\neq m$ and  $\mathcal{H}_2^{p,q}(\tilde{M},\tilde{h})\neq 0$ if $p+q=m$. As a consequence we have $(-1)^m\chi(M)>0$,
\item  if $\sec_h(M)<0$ then  $(M,h)$ is K\"ahler hyperbolic.
\end{itemize}
\end{teo}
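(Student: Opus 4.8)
The plan is to establish the two bullet points separately: the first is the $L^2$-Hodge vanishing-and-nonvanishing assertion, while the second is a purely Riemannian statement about the universal covering. For the \emph{vanishing} in the first bullet I would work with the Lefschetz operator $L\colon L^2\Omega^k(\tilde M,\tilde h)\to L^2\Omega^{k+2}(\tilde M,\tilde h)$, $L\alpha:=\tilde\omega\wedge\alpha$, together with its adjoint $\Lambda:=L^*$. Because $\tilde\omega=\pi^*\omega$ and $\tilde h=\pi^*h$, the pointwise norm $|\tilde\omega|_{\tilde h}$ coincides with its value on the compact manifold $M$ and is therefore bounded, so $L$ and $\Lambda$ are \emph{bounded} operators on $L^2$. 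The Kähler identity $[\Delta_k,L]=0$ holds as differential operators, and since $\tilde h$ is complete one has $\mathcal H^k_2(\tilde M,\tilde h)=\ker(\Delta_k)$; hence $L$ restricts to a bounded map $\mathcal H^k_2\to\mathcal H^{k+2}_2$ (and similarly $\Lambda$), realizing the relation $[L,\Lambda]=(k-m)\,\id$ on $k$-forms.

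The crux of the vanishing is then short. Let $\alpha\in\mathcal H^k_2(\tilde M,\tilde h)$ with $k<m$. As $\alpha$ is closed, $L\alpha=(d\eta)\wedge\alpha=d(\eta\wedge\alpha)$; since $\eta$ is bounded and $\alpha\in L^2$ the form $\eta\wedge\alpha$ is $L^2$ with $L^2$ differential, hence lies in the domain of the unique closed extension of $d$, so $L\alpha\in\overline{\im(d)}$. By the orthogonal decomposition \eqref{ortox} this space is orthogonal to $\mathcal H^{k+2}_2$, while $L\alpha\in\mathcal H^{k+2}_2$; therefore $L\alpha=0$. Pairing $[L,\Lambda]=(k-m)\id$ against $\alpha$ now gives $(k-m)\|\alpha\|^2=\langle L\Lambda\alpha,\alpha\rangle-\langle\Lambda L\alpha,\alpha\rangle=\|\Lambda\alpha\|^2\ge 0$, which forces $\alpha=0$ because $k<m$. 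Thus $\mathcal H^k_2=0$ for $k<m$, and by the Hodge-star isomorphism $\mathcal H^k_2\cong\mathcal H^{2m-k}_2$ also for $k>m$. Since in the Kähler case $\mathcal H^k_2=\bigoplus_{p+q=k}\mathcal H^{p,q}_2$, this yields $\mathcal H^{p,q}_2(\tilde M,\tilde h)=0$ for $p+q\neq m$.

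For the Euler characteristic consequence I would apply Atiyah's $L^2$-index theorem in the form of Th.~\ref{gamma1}, giving $\chi(M)=\sum_k(-1)^kb^{(2)}_k(M)$, which collapses under the vanishing to $(-1)^m\chi(M)=\dim_\Gamma\mathcal H^m_2(\tilde M,\tilde h)\ge0$. The genuinely hard point, which I expect to be the main obstacle, is the \emph{nonvanishing} $\mathcal H^{p,q}_2\neq0$ for $p+q=m$ and the resulting strict inequality. Here Th.~\ref{gamma2} is useful: combined with the vanishing it shows that for each fixed $p$ only $q=m-p$ survives, so $h^{p,m-p}_{(2)}(M)=(-1)^{m-p}\sum_q(-1)^q h^{p,q}_{\overline{\partial}}(M)=(-1)^{m-p}\chi(M,\Omega^p_M)$, and nonvanishing of $\mathcal H^{p,m-p}_2$ is thus equivalent to nonvanishing of the holomorphic Euler characteristic $\chi(M,\Omega^p_M)$. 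This does not follow formally from $d$-boundedness; Gromov obtains it through an index computation for a twisted Dolbeault operator whose $L^2$-index is, via the $L^2$-version of Atiyah--Singer/Riemann--Roch, a nonzero multiple of the non-degenerate volume $\int_M\omega^m>0$. Granting the nonvanishing, $(-1)^m\chi(M)=b^{(2)}_m(M)>0$.

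Finally, for the second bullet I would argue on the universal covering. If $\sec_h(M)<0$ then $(\tilde M,\tilde h)$ is simply connected with negative, in particular non-positive, curvature, hence a Cartan--Hadamard manifold; $\exp_o$ is then a diffeomorphism for any base point $o$, so $H^2_{dR}(\tilde M)=0$ and $\tilde\omega=d\eta$ for some smooth $1$-form $\eta$. To make $\eta$ bounded I would produce it via the geodesic contraction $\Phi\colon[0,1]\times\tilde M\to\tilde M$, $\Phi(t,x)=\exp_o\!\big(t\exp_o^{-1}(x)\big)$, through the homotopy formula $\eta=\int_0^1\Phi_t^*\big(\iota_{\partial_t\Phi}\,\tilde\omega\big)\,dt$. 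Its pointwise norm is controlled by $\|d\Phi_t\|$ and the contraction speed; since $M$ is compact the curvature is pinched between two negative constants, and Rauch/Jacobi-field comparison shows that the radial contraction squeezes the transverse directions exponentially, which dominates the at-most-linear growth of the velocity. This gives a uniform bound on $|\eta|_{\tilde h}$, so $\tilde\omega$ is $d$-bounded and $(M,h)$ is Kähler hyperbolic.
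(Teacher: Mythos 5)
The paper does not prove Theorem \ref{hyper} at all: it is quoted as a known result of Gromov with a citation to \cite{Gromov}, so there is no in-paper argument to compare yours against. What you have written is, in substance, a faithful reconstruction of Gromov's own proof, and the parts you actually carry out are correct. For the vanishing: $L$ and $\Lambda=L^*$ are bounded because $|\tilde\omega|_{\tilde h}$ is the pullback of a continuous function on the compact base; $L\alpha=d(\eta\wedge\alpha)$ with $\eta\wedge\alpha\in L^2$ and $d(\eta\wedge\alpha)=L\alpha\in L^2$ places $\eta\wedge\alpha$ in the (maximal $=$ minimal, by completeness) domain of $d$, so $L\alpha\in\overline{\im(d)}$; since $[L,\Delta]=0$ and $L\alpha\in L^2$, Gaffney's argument puts $L\alpha\in\mathcal H^{k+2}_2$, which is orthogonal to $\overline{\im(d)}$ by \eqref{ortox}, whence $L\alpha=0$; and the identity $(k-m)\|\alpha\|^2=\|\Lambda\alpha\|^2-\|L\alpha\|^2$ then forces $\alpha=0$ for $k<m$. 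The reduction of $(-1)^m\chi(M)>0$ to $b^{(2)}_m(M)>0$ via Th.~\ref{gamma1}, and of the nonvanishing of $\mathcal H^{p,m-p}_2$ to $\chi(M,\Omega^p_M)\neq0$ via Th.~\ref{gamma2}, is also right. The second bullet is the standard Cartan--Hadamard homotopy-formula estimate; with $\sec\le -a^2<0$ (pinching from compactness) the transverse Jacobi-field comparison gives $|\eta|\le\|\omega\|_\infty\int_0^1 r\,\sinh(atr)/\sinh(ar)\,dt\le\|\omega\|_\infty/a$, exactly as you indicate.

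The one genuine gap is the nonvanishing for $p+q=m$, which you explicitly ``grant.'' This is not a detail: it is the analytically hard core of Gromov's theorem, requiring his twisted $L^2$-index computation (perturbing the Dolbeault operator by a connection with curvature $\varepsilon\omega$, proving a spectral gap off middle degree for small $\varepsilon\neq0$ using $d$-boundedness again, and observing that the index is a polynomial in $\varepsilon$ with nonzero top coefficient $\int_M\omega^m/m!$). You correctly identify both the reduction and the mechanism, so the outline is honest, but as written the proof of the strict inequality $(-1)^m\chi(M)>0$ is incomplete. It is worth noting that this is precisely the step that fails in the weaker K\"ahler-parabolic setting of Th.~\ref{para}, and that the whole point of the paper's Section~3 (Prop.~\ref{salomone}, Prop.~\ref{tianjin}) is to recover a substitute for it from the hypothesis $\sigma(M)\neq0$ via the $L^2$-Hodge index theorem, rather than from a twisted index computation.
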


In \cite{CaoXa} and \cite{Jost} the authors introduced the next definition, which includes as a particular case, the above definition of Gromov:
\begin{defi}
\label{garagatto}
Let $(M,h)$ be a compact K\"ahler manifold and let $\omega$ be the corresponding $(1,1)$-form. Let $\pi:\tilde{M}\rightarrow M$ be the universal covering of $M$. Let $\tilde{h}:=\pi^*h$ and $\tilde{\omega}:=\pi^*\omega$. Then $(M,h)$ is said K\"ahler parabolic \footnote{K\"ahler non elliptic in \cite{Jost}.} if $\tilde{\omega}$ is $d$-sublinear, that is if there exists a $1$-form $\eta\in \Omega^1(\tilde{M})$, a point $q\in \tilde{M}$ and  constants $b>0$, $c>0$ such that $d\eta=\tilde{\omega}$ and $|\eta|_{\tilde{h}}(x)\leq bd_{\tilde{h}}(q,x)+c$ where $d_{\tilde{h}}$ is the distance function associated to $\tilde{h}$ and $|\eta|_{\tilde{h}}$ is the pointwise norm of $\eta$.
\end{defi}

In this setting the authors proved in \cite{CaoXa} and \cite{Jost} the following:
\begin{teo}
\label{para}
Let $(M,h)$ be a compact K\"ahler manifold of complex dimension $m$:
\begin{itemize}
\item assume that   $(M,h)$ is K\"ahler parabolic.  Let $\pi:\tilde{M}\rightarrow M$ be the universal covering of $M$ and let $\tilde{h}:=\pi^*h$. Then $\mathcal{H}_2^{p,q}(\tilde{M},\tilde{h})=0$ if $p+q\neq m$. As a consequence we have $(-1)^m\chi(M)\geq 0$,
\item  if $\sec_h(M)\leq 0$ then  $(M,h)$ is K\"ahler parabolic.
\end{itemize}
\end{teo}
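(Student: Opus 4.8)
The plan is to follow Gromov's proof of Theorem \ref{hyper}, relaxing the boundedness of the primitive to the sublinear growth of Definition \ref{garagatto}, and to treat the two bullets separately. For the first bullet the structural input is the Lefschetz $\mathfrak{sl}_2$-action on the complete K\"ahler manifold $(\tilde M,\tilde h)$: the operators $L=\tilde\omega\wedge\cdot$, its adjoint $\Lambda$, and $H=[L,\Lambda]$ are bounded, commute with $\Delta$ by the K\"ahler identities, and hence preserve $\bigoplus_k\mathcal H^k_2(\tilde M,\tilde h)$, making it a bounded-weight $\mathfrak{sl}_2$-module. Standard $\mathfrak{sl}_2$-representation theory then gives, for $k<m$, injectivity of $L^{m-k}\colon\mathcal H^k_2(\tilde M,\tilde h)\to\mathcal H^{2m-k}_2(\tilde M,\tilde h)$. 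Thus to prove $\mathcal H^{p,q}_2(\tilde M,\tilde h)=0$ for $p+q\neq m$ it suffices to show that $L$ annihilates every $L^2$-harmonic form; the degrees $k>m$ are recovered from $k<m$ via the Hodge-star isomorphism $\mathcal H^k_2\cong\mathcal H^{2m-k}_2$.

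So the heart of the matter is to prove $L\alpha=0$ for $\alpha\in\mathcal H^k_2(\tilde M,\tilde h)$. Since $[\Delta,L]=0$, the form $L\alpha=\tilde\omega\wedge\alpha$ is again $L^2$-harmonic, so by \eqref{ortox} it is enough to show $\tilde\omega\wedge\alpha\in\overline{\im(d)}$: harmonicity together with membership in $\overline{\im(d)}$ forces it to vanish. Writing $\tilde\omega=d\eta$ and using that $\alpha$ is closed, $\tilde\omega\wedge\alpha=d(\eta\wedge\alpha)$; in the hyperbolic case $\eta$ is bounded, $\eta\wedge\alpha\in L^2$, and we are done. Under sublinearity $\eta\wedge\alpha$ need not be $L^2$, and this is the crux: fixing the base point $q$ of Definition \ref{garagatto}, choose cutoffs $\chi_R$ equal to $1$ on $B_R(q)$, supported in $B_{2R}(q)$, with $|d\chi_R|\leq C/R$, and write
\[
d(\chi_R\,\eta\wedge\alpha)=\chi_R\,\tilde\omega\wedge\alpha+d\chi_R\wedge\eta\wedge\alpha .
\]
The left-hand side lies in $\im(d)$ since $\chi_R\eta\wedge\alpha$ is compactly supported; the first term on the right converges to $\tilde\omega\wedge\alpha$ in $L^2$ by dominated convergence (as $\tilde\omega$ is bounded and $\alpha\in L^2$); and the error term obeys
\[
\|d\chi_R\wedge\eta\wedge\alpha\|_{L^2}\leq \frac{C}{R}\,(2bR+c)\,\|\alpha\|_{L^2(B_{2R}\setminus B_R)},
\]
using $|\eta|_{\tilde h}\leq b\,d_{\tilde h}(q,\cdot)+c$. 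The prefactor stays bounded while $\|\alpha\|_{L^2(B_{2R}\setminus B_R)}\to0$ because $\alpha\in L^2(\tilde M)$, so the error tends to $0$ and $\tilde\omega\wedge\alpha\in\overline{\im(d)}$. Once $\mathcal H^k_2(\tilde M,\tilde h)=0$ for $k\neq m$ is established, the consequence follows from Atiyah's theorem: by \eqref{please} and Theorem \ref{gamma1}, $\chi(M)=\sum_{k}(-1)^k b^{(2)}_k(M)=(-1)^m b^{(2)}_m(M)$, and since $b^{(2)}_m(M)=\dim_\Gamma\mathcal H^m_2(\tilde M,\tilde h)\geq0$ we obtain $(-1)^m\chi(M)\geq0$.

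For the second bullet the aim is to show that $\sec_h(M)\leq 0$ makes $\tilde\omega$ $d$-sublinear. Since $M$ is compact and $\pi$ is a local isometry, $\tilde M$ is complete, simply connected with $\sec\leq0$, hence Cartan--Hadamard, and $|\tilde\omega|_{\tilde h}$ is bounded by $\max_M|\omega|_h$. Fixing $q$, the map $\exp_q$ is a diffeomorphism, so the radial contraction $\phi_s(x)=\exp_q(s\exp_q^{-1}(x))$, $s\in[0,1]$, is a smooth homotopy from the identity to the constant map $q$, and the de Rham homotopy operator yields $\eta=\int_0^1\iota_{\partial_s}(\phi^*\tilde\omega)\,ds$ with $d\eta=\tilde\omega$. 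To estimate $|\eta|$ I would use that $\partial_s\phi_s(x)$ is the velocity of the geodesic from $q$ to $x$, of norm $r:=d_{\tilde h}(q,x)$, while $d\phi_s|_x(v)=J(sr)$ is the value at $sr$ of the Jacobi field $J$ along that geodesic with $J(0)=0$ and $J(r)=v$. For $\sec\leq0$ the function $t\mapsto|J(t)|/t$ is non-decreasing, whence $|d\phi_s|_x(v)|=|J(sr)|\leq s\,|v|$, and combining the bounds gives
\[
|\eta|_{\tilde h}(x)\leq \int_0^1 \|\tilde\omega\|_\infty\, r\, s\,ds=\tfrac12\|\tilde\omega\|_\infty\, d_{\tilde h}(q,x),
\]
i.e.\ $\eta$ has at most linear growth, so $(M,h)$ is K\"ahler parabolic.

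The main obstacle is the cutoff argument of the second paragraph: the whole point of the parabolic relaxation is that boundedness of $\eta$ fails, so one must instead exploit that the gradient of a cutoff on the annulus $B_{2R}(q)\setminus B_R(q)$ decays like $1/R$ exactly fast enough to absorb the linear growth of $\eta$, leaving an $L^2$-tail of $\alpha$ that vanishes in the limit. The remaining ingredients, namely the $\mathfrak{sl}_2$-Lefschetz structure on $L^2$-harmonic forms and the comparison $|J(t)|/t\nearrow$, are standard; the one point I would check carefully is that the representation-theoretic injectivity of $L^{m-k}$ remains valid for the possibly infinite-dimensional module $\bigoplus_k\mathcal H^k_2(\tilde M,\tilde h)$, which it does because the $H$-weights are confined to $\{-m,\dots,m\}$.
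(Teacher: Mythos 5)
The paper does not prove this statement: Theorem \ref{para} is quoted verbatim from Cao--Xavier \cite{CaoXa} and Jost--Zuo \cite{Jost}, with no argument supplied. Your proposal is a correct reconstruction of the proof in those references: the cutoff identity $d(\chi_R\,\eta\wedge\alpha)=\chi_R\,\tilde\omega\wedge\alpha+d\chi_R\wedge\eta\wedge\alpha$ with $|d\chi_R|\lesssim 1/R$ absorbing the linear growth of $\eta$ is exactly the Cao--Xavier mechanism, and the Cartan--Hadamard radial homotopy with the monotonicity of $|J(t)|/t$ is the standard way to get the second bullet. The only remark worth adding is that your concern about $\mathfrak{sl}_2$-representation theory on an infinite-dimensional module is unnecessary: the injectivity of $L^{m-k}$ on $k$-forms for $k<m$ is a pointwise linear-algebra fact about the exterior algebra of a Hermitian vector space, so it applies to $L^2$-harmonic forms with no further argument.
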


\noindent As we can see Th. \ref{para} extends to the K\"ahler parabolic case the vanishing result contained in Th. \ref{hyper} concerning the K\"ahler hyperbolic case. On the other hand Th. \ref{para} does not provide any information about the non-vanishing of $\mathcal{H}^{p,q}_2(\tilde{M},\tilde{h})$ when $p+q=m$ or equivalently about the non-vanishing of $\chi(M)$. Therefore some natural questions that arise comparing Th. \ref{hyper} and Th. \ref{para}  are the following:
\begin{itemize}
\item let $(M,h)$ be a compact K\"ahler, K\"ahler parabolic manifold. Under what circumstances does  the Euler characteristic of $M$ vanish? 
\item  let $(M,h)$ be a compact K\"ahler, K\"ahler parabolic manifold, let $\pi:\tilde{M}\rightarrow M$ be its universal covering and let $\tilde{h}:=\pi^*h$. Under what circumstances is $\mathcal{H}^{p,m-p}_2(\tilde{M},\tilde{h})\neq 0?$ In particular when do we have $\mathcal{H}^{m,0}_2(\tilde{M},\tilde{h})\neq 0?$
\end{itemize}
In what follows we will show that these questions are deeply connected with the non vanishing of the signature of $M$.

\begin{prop}
\label{salomone}
Let $(M,h)$ be a compact, K\"ahler, K\"ahler-parabolic manifold of complex dimension $2m$. Assume that $\sigma(M)\neq 0$. Then $\chi(M)>0$.
\end{prop}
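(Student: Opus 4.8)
The plan is to combine the vanishing theorem for K\"ahler parabolic manifolds (Th. \ref{para}) with the two $L^2$-index formulas established in the previous section (Th. \ref{gammahodge} and Th. \ref{gammafrol}), all taken with respect to the universal covering $\pi:\tilde{M}\rightarrow M$, so that the $L^2$-Hodge numbers may be written simply as $h^{p,q}_{(2)}(M)$. Writing the complex dimension as $2m$, the idea is that the only possibly nonzero $L^2$-Hodge numbers are those on the antidiagonal $p+q=2m$; this forces the Fr\"olicher sum to become a sum of nonnegative terms, while the Hodge-index sum becomes a signed combination of exactly the same terms, and the nonvanishing of $\sigma(M)$ then pins down a strictly positive contribution to $\chi(M)$.

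First I would invoke Th. \ref{para}: since $(M,h)$ is K\"ahler parabolic of complex dimension $2m$, the space of $L^2$-harmonic forms $\mathcal{H}^{p,q}_2(\tilde{M},\tilde{h})$ vanishes whenever $p+q\neq 2m$. Passing to von Neumann dimensions, this yields
$$h^{p,q}_{(2)}(M)=0 \quad\text{for every } (p,q) \text{ with } p+q\neq 2m.$$

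Next I would feed this vanishing into Th. \ref{gammafrol}. Because $(-1)^{p+q}=(-1)^{2m}=1$ on the surviving antidiagonal, the $L^2$-Fr\"olicher formula collapses to
$$\chi(M)=\sum_{p,q=0}^{2m}(-1)^{p+q}h^{p,q}_{(2)}(M)=\sum_{p+q=2m}h^{p,q}_{(2)}(M),$$
and since each $L^2$-Hodge number is a nonnegative real number this already gives $\chi(M)\geq 0$. Similarly, Th. \ref{gammahodge} applies since $2m$ is even, and it collapses to
$$\sigma(M)=\sum_{p,q=0}^{2m}(-1)^p h^{p,q}_{(2)}(M)=\sum_{p=0}^{2m}(-1)^p h^{p,2m-p}_{(2)}(M),$$
an alternating sum of precisely the same nonnegative quantities that appear in the expression for $\chi(M)$.

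Finally I would conclude: the hypothesis $\sigma(M)\neq 0$ forces at least one summand $h^{p,2m-p}_{(2)}(M)$ to be strictly positive, and then the unsigned sum defining $\chi(M)$ has all terms nonnegative and at least one strictly positive, so $\chi(M)>0$. I do not expect any genuine obstacle here: the entire content sits in the three cited theorems, and the only point requiring a little care is verifying that K\"ahler parabolicity concentrates the reduced $L^2$-cohomology on the antidiagonal $p+q=2m$, where the Euler-characteristic sign $(-1)^{p+q}$ is identically $+1$ while the Hodge-index sign $(-1)^p$ still alternates.
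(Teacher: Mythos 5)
Your proposal is correct and follows essentially the same route as the paper: concentration of the $L^2$-cohomology on the antidiagonal $p+q=2m$ via K\"ahler parabolicity, the $L^2$-Hodge index formula to force a nonzero $h^{p,2m-p}_{(2)}(M)$, and positivity of the resulting unsigned sum. The only cosmetic difference is that you express $\chi(M)$ through the $L^2$-Fr\"olicher formula (Th. \ref{gammafrol}) while the paper uses $\chi(M)=\sum_k(-1)^k b^{(2)}_k(M)$ from Th. \ref{gamma1} together with \eqref{please}; in the K\"ahler setting these are the same computation.
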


\begin{proof}
Let $\pi:\tilde{M}\rightarrow M$ be the universal covering of $M$ and let $\tilde{h}:=\pi^*h$. According to Th.\ref{gamma1} we have $\chi(X)=\sum_{k=0}^{4m}(-1)^kb^{(2)}_k(M)$. As shown in \cite{CaoXa} and \cite{Jost} we have $b^{(2)}_k(M)=0$ whenever $k\neq 2m$. Therefore, by \eqref{please}, we get $h^{p,q}_{(2)}(M)=0$ whenever $p+q\neq 2m$. Hence using Th. \ref{gammahodge} we have 
\begin{equation}
\label{riccione}
0\neq \sigma(M)=\sum_{p,q=0}^{2m}(-1)^{p}h^{p,q}_{(2)}(M)=\sum_{p=0}^{2m}(-1)^{p}h^{p,2m-p}_{(2)}(M).
\end{equation}
Clearly \eqref{riccione} implies the existence of  at least one pair $(p,2m-p)$ such that $h^{p,2m-p}_{(2)}(M)\neq 0$. As a consequence we deduce that $b^{(2)}_{2m}(M)\neq 0$ because, as recalled in \eqref{please}, we have $b^{(2)}_k(M)=\bigoplus_{p+q=k}h^{p,q}_{(2)}(M)$. Finally, as we have $\chi(X)=\sum_{k=0}^{4m}(-1)^kb^{(2)}_k(M)=b^{(2)}_{2m}(M)$, we can conclude that $\chi(X)>0$ as desired.
\end{proof}

\begin{cor}
\label{casetta}
Let $(M,h)$ be a compact, K\"ahler, K\"ahler-parabolic manifold of complex dimension $2m$. Assume that $\sigma(M)\neq 0$. Let $(\tilde{M},\tilde{h})$ be the universal covering of  $(M,h)$ and let $\tilde{h}:=\pi^*h$. Then there exists at least one pair $(p,q)$ with $p+q=2m$ such that $h^{p,q}_{(2)}(M)\neq 0$. Hence the corresponding space of $L^2$-harmonic forms $\mathcal{H}^{p,q}_{2}(\tilde{M},\tilde{h})$ is infinite dimensional.
\end{cor}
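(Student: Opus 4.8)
The plan is to observe that the first assertion has essentially already been produced inside the proof of Proposition \ref{salomone}, and then to upgrade the nonvanishing of the relevant $L^2$-Hodge number to infinite dimensionality of the associated space of harmonic forms. Concretely, I would first recall that since $(M,h)$ is K\"ahler parabolic, Th. \ref{para} together with \eqref{please} forces $h^{p,q}_{(2)}(M)=0$ whenever $p+q\neq 2m$. Feeding this vanishing into Th. \ref{gammahodge} and using the hypothesis $\sigma(M)\neq 0$ reproduces exactly \eqref{riccione}, namely
$$0\neq\sigma(M)=\sum_{p=0}^{2m}(-1)^ph^{p,2m-p}_{(2)}(M),$$
so at least one summand must be nonzero. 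This yields a pair $(p,q)$ with $p+q=2m$ and $h^{p,q}_{(2)}(M)\neq 0$, which is precisely the first claim; in this part there is nothing to do beyond rereading \eqref{riccione}.

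Next I would pass from the inequality $h^{p,q}_{(2)}(M)=\dim_{\Gamma}\mathcal{H}^{p,q}_{2}(\tilde{M},\tilde{h})>0$ to the assertion that $\mathcal{H}^{p,q}_{2}(\tilde{M},\tilde{h})$ is infinite dimensional as a complex vector space. The key preliminary input is that $\Gamma=\pi_1(M)$ is infinite: if $\tilde{M}$ were compact, then $\tilde{\omega}=\pi^*\omega$ would be an exact $(1,1)$-form $d\eta$ on the compact K\"ahler manifold $\tilde{M}$ of complex dimension $2m$, forcing $\int_{\tilde{M}}\tilde{\omega}^{2m}=0$ by Stokes and contradicting the positivity of $\vol_{\tilde{h}}(\tilde{M})$. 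Hence $\Gamma$ is infinite, and $\tilde{M}$ is noncompact.

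Finally I would invoke the general principle that for an infinite group $\Gamma$ any nonzero $\Gamma$-module $V$ with $\dim_{\Gamma}(V)>0$ cannot be finite dimensional. The argument is short: were $V$ finite dimensional with orthonormal basis $\eta_1,\dots,\eta_d$, the pointwise function $f:=\sum_{j=1}^d\tilde{h}(\eta_j,\eta_j)$ would be $\Gamma$-invariant, since this sum is independent of the chosen orthonormal basis and $\Gamma$ acts on $V$ by isometries; thus $f$ descends to $M$, and integrating over a fundamental domain versus over all of $\tilde{M}$ gives $d=\sum_{j}\|\eta_j\|_{L^2}^2=|\Gamma|\cdot\dim_{\Gamma}(V)=\infty$, a contradiction. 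Applying this with $V=\mathcal{H}^{p,q}_{2}(\tilde{M},\tilde{h})$, whose von Neumann dimension is the positive number $h^{p,q}_{(2)}(M)$, yields the desired infinite dimensionality. I expect this last comparison between von Neumann dimension and ordinary dimension to be the only genuinely nontrivial step; everything preceding it is either already contained in Proposition \ref{salomone} or is the elementary observation that K\"ahler parabolicity prevents $\tilde{M}$ from being compact.
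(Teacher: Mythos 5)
Your proposal is correct and follows essentially the same route as the paper: the first assertion is read off from \eqref{riccione} in the proof of Prop.~\ref{salomone}, and the second is the standard fact that a $\Gamma$-module of positive Von Neumann dimension over an infinite group is infinite dimensional, which the paper simply cites as Lemma 15.10 of \cite{Roe}. The only additions on your side are a self-contained (and correct) proof of that lemma and the explicit verification that K\"ahler parabolicity forces $\tilde{M}$ to be noncompact, hence $\Gamma$ to be infinite --- a hypothesis the paper leaves implicit in its citation.
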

\begin{proof}
The first part of this corollary is contained in the proof of Prop. \ref{salomone}. The second one follows by \cite{Roe} Lemma 15.10.
\end{proof}

\begin{cor}
\label{cinzia}
Let $(M,h)$ be a compact K\"ahler  manifold of complex dimension $2m$ with non positive sectional curvatures. Assume that $\sigma(M)\neq 0$. Then Prop. \ref{salomone} and Cor. \ref{casetta} hold true for $(M,h)$.  
\end{cor}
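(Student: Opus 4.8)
The plan is to deduce Corollary \ref{cinzia} directly from the two preceding results, Proposition \ref{salomone} and Corollary \ref{casetta}, by verifying that their single hypothesis beyond $\sigma(M)\neq 0$ -- namely that $(M,h)$ is K\"ahler parabolic -- is automatically satisfied under the stronger assumption of non-positive sectional curvature. In other words, the corollary is essentially a matter of checking that the curvature condition feeds into the K\"ahler-parabolic condition, after which the conclusions transfer verbatim.

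First I would invoke the second bullet of Th. \ref{para}, which states precisely that $\sec_h(M)\leq 0$ implies that $(M,h)$ is K\"ahler parabolic. Thus a compact K\"ahler manifold of complex dimension $2m$ with non-positive sectional curvatures is in particular compact, K\"ahler, and K\"ahler parabolic, so it meets all the standing hypotheses of Prop. \ref{salomone} and Cor. \ref{casetta}. Combined with the assumed condition $\sigma(M)\neq 0$, every hypothesis required by those two statements is in force.

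Having established that the manifold satisfies the hypotheses of both prior results, I would then simply apply them: Prop. \ref{salomone} yields $\chi(M)>0$, and Cor. \ref{casetta} yields the existence of at least one pair $(p,q)$ with $p+q=2m$ and $h^{p,q}_{(2)}(M)\neq 0$, together with the infinite-dimensionality of the corresponding space $\mathcal{H}^{p,q}_{2}(\tilde{M},\tilde{h})$ of $L^2$-harmonic forms. This is exactly the assertion that Prop. \ref{salomone} and Cor. \ref{casetta} hold true for $(M,h)$.

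The proof presents no genuine obstacle, since the only nontrivial content is the implication from negative-semidefinite curvature to the K\"ahler-parabolic property, and that implication is quoted ready-made as part of Th. \ref{para}. The one point worth noting for care is that the dimension bookkeeping matches: both Prop. \ref{salomone} and Cor. \ref{casetta} are stated for complex dimension $2m$, which is exactly the dimension assumed here, so no reindexing or parity adjustment is needed and the two results can be applied without modification.
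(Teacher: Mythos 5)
Your proposal is correct and follows exactly the paper's own argument: the only content of the corollary is that non-positive sectional curvature implies the K\"ahler-parabolic condition (the second bullet of Th.~\ref{para}, due to Cao--Xavier and Jost--Zuo), after which Prop.~\ref{salomone} and Cor.~\ref{casetta} apply verbatim. Nothing further is needed.
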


\begin{proof}
This follows by the fact that if a compact  K\"ahler manifold $(M,h)$ has non positive sectional curvatures then it is K\"ahler parabolic. See for instance \cite{CaoXa} or \cite{Jost}.
\end{proof}

From now on we specialize to the setting of compact complex surfaces.

\begin{prop}
\label{tianjin}
Let $(M,h)$ be a compact K\"ahler surface with infinite fundamental group and $\sigma(M)\neq 0$. Let $\pi:\tilde{M}\rightarrow M$ be the universal covering of $(M,h)$ and let $\tilde{h}:=\pi^*h$. Then:
\begin{enumerate}
\item There exists at least one pair $(p,q)$ with $p+q=2$ such that $h^{p,q}_{(2)}(M)\neq 0$. In particular the corresponding space of $L^2$-harmonic forms $\mathcal{H}^{p,q}_{2}(\tilde{M},\tilde{h})$ is infinite dimensional.
\item  Assume now that $\sigma(M)>0$. Then $M$ satisfies $2h^{2,0}_{(2)}(M)>h^{1,1}_{(2)}(M)$. In particular $h^{2,0}_{(2)}(M)> 0$ and therefore $\mathcal{H}^{2,0}_2(\tilde{M},\tilde{h})$, that is the space of  holomorphic $L^2$-$(2,0)$-forms on $(\tilde{M},\tilde{h})$, is infinite dimensional. 
\end{enumerate}
\end{prop}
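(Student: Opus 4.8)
The plan is to apply the $L^2$-Hodge index theorem (Th.~\ref{gammahodge}) in the special case $m=2$ and then use the symmetries of the $L^2$-Hodge numbers to collapse the double sum into the single combination $2h^{2,0}_{(2)}(M)-h^{1,1}_{(2)}(M)$. First I would take $\pi:\tilde{M}\rightarrow M$ to be the universal covering, so that $\Gamma=\pi_1(M)$ and the $\Gamma$-subscript can be dropped from the notation, and write out $\sigma(M)=\sum_{p,q=0}^{2}(-1)^{p}h^{p,q}_{(2)}(M)$. The nine $L^2$-Hodge numbers are constrained by the two symmetries recorded in \eqref{hhh} and \eqref{please}, namely $h^{p,q}_{(2)}(M)=h^{q,p}_{(2)}(M)$ (Kähler conjugation) and $h^{p,q}_{(2)}(M)=h^{2-p,2-q}_{(2)}(M)$ (Hodge star composed with conjugation). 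Together these force the fourfold equality $h^{0,1}_{(2)}=h^{1,0}_{(2)}=h^{2,1}_{(2)}=h^{1,2}_{(2)}$, together with $h^{0,2}_{(2)}=h^{2,0}_{(2)}$ and $h^{0,0}_{(2)}=h^{2,2}_{(2)}$.

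Next I would bring in the hypothesis that $\pi_1(M)$ is infinite. As observed in Section~1.2, this makes $\tilde{M}$ have infinite volume, whence $b^{(2)}_{0}(M)=0$; since $b^{(2)}_{0}(M)=h^{0,0}_{(2)}(M)$ by \eqref{please}, we obtain $h^{0,0}_{(2)}(M)=h^{2,2}_{(2)}(M)=0$. Substituting all of this into the signature sum, the four equal quantities $h^{0,1}_{(2)},h^{1,0}_{(2)},h^{2,1}_{(2)},h^{1,2}_{(2)}$ enter with canceling signs and the corner terms vanish, so what survives is exactly $\sigma(M)=2h^{2,0}_{(2)}(M)-h^{1,1}_{(2)}(M)$.

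From this identity both assertions are immediate. For part~(1), if every $L^2$-Hodge number with $p+q=2$ vanished, i.e.\ $h^{2,0}_{(2)}(M)=h^{1,1}_{(2)}(M)=0$, then the displayed formula would give $\sigma(M)=0$, contrary to hypothesis; hence one of the pairs $(2,0),(1,1),(0,2)$ carries a nonzero $L^2$-Hodge number. For part~(2), the hypothesis $\sigma(M)>0$ reads $2h^{2,0}_{(2)}(M)>h^{1,1}_{(2)}(M)$ verbatim; and since $h^{1,1}_{(2)}(M)\geq 0$ (Von Neumann dimensions are nonnegative), this in particular forces $h^{2,0}_{(2)}(M)>0$. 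The infinite-dimensionality of the associated harmonic spaces then follows exactly as in Cor.~\ref{casetta} from \cite{Roe} Lemma~15.10: with $\Gamma=\pi_1(M)$ infinite, a $\Gamma$-module of positive Von Neumann dimension cannot be finite-dimensional. Here $\mathcal{H}^{2,0}_{2}(\tilde{M},\tilde{h})=\ker(\overline{\partial}_{2,0})\cap L^2\Omega^{2,0}(\tilde{M},\tilde{h})$ consists precisely of the $L^2$ holomorphic $(2,0)$-forms, so the final phrasing is justified.

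I expect the only real work to be the sign-and-symmetry bookkeeping that produces the clean coefficient $2h^{2,0}_{(2)}(M)-h^{1,1}_{(2)}(M)$; once the vanishing $h^{0,0}_{(2)}(M)=0$ is in place, everything else is a direct reading of the formula and a single appeal to nonnegativity of $\dim_{\Gamma}$.
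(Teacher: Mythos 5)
Your proposal is correct and follows essentially the same route as the paper's proof: derive the identity $\sigma(M)=2h^{2,0}_{(2)}(M)-h^{1,1}_{(2)}(M)$ from Th.~\ref{gammahodge} combined with the symmetries \eqref{hhh}, \eqref{please} and the vanishing $h^{0,0}_{(2)}(M)=h^{2,2}_{(2)}(M)=0$ coming from the infinite volume of the universal cover, then read off both claims and invoke Lemma~15.10 of \cite{Roe} for infinite dimensionality. No gaps.
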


\begin{proof}
Let $\pi:\tilde{M}\rightarrow M$ be the universal covering of $M$ and let $\tilde{h}:=\pi^*h$. According to the first remark after Th. \ref{gamma2} we know that $(\tilde{M},\tilde{h})$ has infinite volume and this in turn implies that $0=\mathcal{H}^{0,0}_{2}(\tilde{M},\tilde{h})$. Since $\mathcal{H}^{0,0}_{2}(\tilde{M},\tilde{h})\cong \mathcal{H}^{2,2}_{2}(\tilde{M},\tilde{h})$ we can also conclude that $\mathcal{H}^{2,2}_{2}(\tilde{M},\tilde{h})=0$. Moreover, according to \eqref{hhh} and \eqref{please}, we have  $h^{1,0}_{(2)}(M)=h^{0,1}_{(2)}(M)=h^{2,1}_{(2)}(M)=h^{1,2}_{(2)}(M)$   and $h^{2,0}_{(2)}(M)=h^{0,2}_{(2)}(M)$. This yields the following simplification within the formula proved in Th. \ref{gammahodge}:
\begin{equation}
\label{sobsob}
\sigma(M)=\sum_{p,q=0}^{2}(-1)^ph_{(2)}^{p,q}(M)=2h^{2,0}_{(2)}(M)-h^{1,1}_{(2)}(M)
\end{equation}
 and so we can deduce that $2h^{2,0}_{(2)}(M)-h^{1,1}_{(2)}(M)\neq 0$. Therefore there exists at least one pair $(p,q)$ with $p+q=2$ such that $h^{p,q}_{(2)}(M)\neq 0$. Applying  Lemma 15.10 in \cite{Roe} we can conclude that $\mathcal{H}^{p,q}_{2}(\tilde{M},\tilde{h})$ is infinite dimensional. If $\sigma(M)>0$ then $2h^{2,0}_{(2)}(M)-h^{1,1}_{(2)}(M)>0$ and therefore $h^{2,0}_{(2)}(M)>0$. Applying again Lemma 15.10 in \cite{Roe} we can conclude that $\mathcal{H}^{2,0}_{2}(\tilde{M},\tilde{h})$ is infinite dimensional. 
\end{proof}

Assuming moreover that $h^{1,0}_{(2)}(M)=0$ we have the following version of Prop. \ref{salomone}. Clearly the assumption $h^{1,0}_{(2)}(M)=0$ is satisfied when $(M,h)$ is a  K\"ahler parabolic surface.

\begin{prop}
\label{luccio}
Let $(M,h)$ be a compact K\"ahler surface with infinite fundamental group and  $h^{1,0}_{(2)}(M)=0$. We have the following properties:
\begin{enumerate}
 \item if  $\sigma(M)\neq 0$ then $\chi(M)>0$,
\item if $\sigma(M)>0$ then $\chi(M)>0$ and $\chi(M,\mathcal{O}_M)>0$.
\end{enumerate}
\end{prop}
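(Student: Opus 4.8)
The plan is to reduce everything to two nonnegative quantities, $a:=h^{2,0}_{(2)}(M)$ and $b:=h^{1,1}_{(2)}(M)$, and then to read off both conclusions from the formulas for $\chi(M)$ and $\sigma(M)$ that are already available. First I would pin down all the $L^2$-Hodge numbers of the surface. Since $\pi_1(M)$ is infinite, the universal cover has infinite volume, so $h^{0,0}_{(2)}(M)=0$ exactly as in the proof of Prop. \ref{tianjin}; the duality $h^{p,q}_{(2)}(M)=h^{2-p,2-q}_{(2)}(M)$ of \eqref{hhh} then gives $h^{2,2}_{(2)}(M)=0$. The hypothesis $h^{1,0}_{(2)}(M)=0$, together with the K\"ahler symmetry $h^{p,q}_{(2)}(M)=h^{q,p}_{(2)}(M)$ of \eqref{please} and the same duality, forces $h^{0,1}_{(2)}(M)=h^{2,1}_{(2)}(M)=h^{1,2}_{(2)}(M)=0$ as well. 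Hence the only possibly nonzero numbers are $a=h^{2,0}_{(2)}(M)=h^{0,2}_{(2)}(M)\ge 0$ and $b=h^{1,1}_{(2)}(M)\ge 0$.

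Next I would compute the two invariants from this data. Feeding the above into Th. \ref{gammafrol} yields $\chi(M)=2a+b$, while Th. \ref{gammahodge} reproduces \eqref{sobsob}, namely $\sigma(M)=2a-b$. Part 1 is then purely arithmetic: since $a,b\ge 0$ we always have $\chi(M)=2a+b\ge 0$, and if $\chi(M)=0$ then $a=b=0$, whence $\sigma(M)=2a-b=0$. Contrapositively, $\sigma(M)\neq 0$ forces $\chi(M)>0$.

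For part 2, the inequality $\chi(M)>0$ is immediate from part 1. To handle $\chi(M,\mathcal{O}_M)=\sum_{q=0}^{2}(-1)^q h^{0,q}_{\overline{\partial}}(M)$ I would apply Th. \ref{gamma2} in the single column $p=0$: this identifies the holomorphic Euler characteristic with $\sum_{q=0}^{2}(-1)^q h^{0,q}_{(2)}(M)=h^{0,0}_{(2)}(M)-h^{0,1}_{(2)}(M)+h^{0,2}_{(2)}(M)=a$. When $\sigma(M)>0$ we have $2a-b>0$ with $b\ge 0$, so $a>0$ (this is also the content of Prop. \ref{tianjin}, part 2), and therefore $\chi(M,\mathcal{O}_M)=a=h^{2,0}_{(2)}(M)>0$.

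The one genuinely non-bookkeeping step is the identification $\chi(M,\mathcal{O}_M)=h^{2,0}_{(2)}(M)$ through the $p=0$ instance of Th. \ref{gamma2}; the remainder is the symmetry accounting of the first paragraph combined with the nonnegativity of the Von Neumann dimensions. I therefore do not expect a serious obstacle, only the need to keep careful track of which isomorphisms are being used, namely the $\mathbb{C}$-linear Hodge-star duality of \eqref{hhh} versus the $\mathbb{C}$-anti-linear conjugation symmetry of \eqref{please}, since both are invoked to collapse the off-diagonal entries.
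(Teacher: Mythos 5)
Your proof is correct and follows essentially the same route as the paper: both arguments reduce all $L^2$-Hodge numbers of the surface to $h^{2,0}_{(2)}(M)$ and $h^{1,1}_{(2)}(M)$ via the infinite-volume vanishing, duality, K\"ahler symmetry and the hypothesis $h^{1,0}_{(2)}(M)=0$, then read off $\chi(M)=2a+b$, $\sigma(M)=2a-b$ and $\chi(M,\mathcal{O}_M)=h^{2,0}_{(2)}(M)$. The only cosmetic difference is that you compute $\chi(M)$ from Th.~\ref{gammafrol} while the paper passes through the $L^2$-Betti numbers $\chi(M)=b^{(2)}_2(M)$ via Th.~\ref{gamma1} and \eqref{please}, which is the same computation.
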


\begin{proof}
Let $\pi:\tilde{M}\rightarrow M$ be the universal covering of $M$ and let $\tilde{h}:=\pi^*h$. As in Prop. \ref{tianjin}  we have $$\sigma(M)=\sum_{p,q=0}^{2}(-1)^ph_{(2)}^{p,q}(M)=2h^{2,0}_{(2)}(M)-h^{1,1}_{(2)}(M)$$ and so we can deduce that $2h^{2,0}_{(2)}(M)-h^{1,1}_{(2)}(M)\neq 0$. Using  the fact that $(\tilde{M},\tilde{h})$ has infinite volume we have $0=\mathcal{H}^0_2(\tilde{M},\tilde{h})\cong \mathcal{H}^4_2(\tilde{M},\tilde{h})$, which implies $0=b^{(2)}_{0}(M)=b^{(2)}_4(M)$. Moreover, by the  the fact that $h^{1,0}_{(2)}(M)=h^{0,1}_{(2)}(M)$, $b^{(2)}_1(M)=b^{(2)}_3(M)$ and $b_1^{(2)}(M)=h^{1,0}_{(2)}(M)+h^{0,1}_{(2)}(M)$, we obtain the vanishing of both $b^{(2)}_1(M)$ and $b^{(2)}_3(M)$. Hence, in this setting, we have $\chi(M)=b^{(2)}_2(M)$. As $b^{(2)}_2(M)=2h^{2,0}_{(2)}(M)+h^{1,1}_{(2)}(M)$ and $2h^{2,0}_{(2)}(M)-h^{1,1}_{(2)}(M)\neq 0$ we can conclude that $b^{(2)}_2(M)>0$ and therefore $\chi(M)>0$ as desired. Finally, according to Th. \ref{gamma2} and \eqref{please}, we have $$\chi(M,\mathcal{O}_M)=\sum_{q=0}^2(-1)^qh_{\overline{\partial}}^{0,q}(M)=\sum_{q=0}^2(-1)^qh_{(2)}^{0,q}(M,h)=\sum_{p=0}^2(-1)^ph_{(2)}^{p,0}(M,h)=h^{2,0}_{(2)}(M).$$ By Prop. \ref{tianjin} we know that $h^{2,0}_{(2)}(M)>0$. We can thus conclude that $\chi(M,\mathcal{O}_M)>0$.
\end{proof}

Using Prop. \ref{tianjin} and Prop. \ref{luccio} we have also the following applications.

\begin{cor}
\label{wuwu}
Let $(M,h)$ be a compact K\"ahler surface with infinite fundamental group and $\sigma(M)\neq 0$.  If $\chi(M)\leq 0$ then $2b_1^{(2)}(M)\geq b_2^{(2)}(M)>0$ and $h^{1,0}_{(2)}(M)>0$.
\end{cor}

\begin{proof}
Since the universal covering of $M$ has infinite volume  we have $b_0^{(2)}(M)=b_4^{(2)}(M)=0$. Hence for the Euler characteristic we have $\chi(M)=-b_1^{(2)}(M)+b_2^{(2)}(M)-b_3^{(2)}(M)=-2b_1^{(2)}(M)+b_2^{(2)}(M)$. Moreover we have $b^{(2)}_2(M)=2h^{2,0}_{(2)}(M)+h^{1,1}_{(2)}(M)$ and $2h^{2,0}_{(2)}(M)-h^{1,1}_{(2)}(M)\neq 0$ because we assumed that $\sigma(M)\neq 0$, see \eqref{sobsob}. Thus  we know that $b_2^{(2)}(M)>0$. Therefore, as we assumed that $\chi(M)\leq 0$, we can conclude that  $2b_1^{(2)}(M)\geq b_2^{(2)}(M) >0$.  Finally, since $b^{(2)}_1(M)=h^{1,0}_{(2)}(M)+h^{0,1}_{(2)}(M)$ and $h^{1,0}_{(2)}(M)=h^{0,1}_{(2)}(M)$, we can conclude that $h^{1,0}_{(2)}(M)>0$.
\end{proof}

\begin{cor}
Let $(M,h)$ be a compact K\"ahler surface with infinite fundamental group and $\sigma(M)\neq 0$.  Assume furthermore that $\chi(M)\leq 0$. Then the underlying smooth four dimensional manifold $M$ carries no Riemannian  metric with nonnegative Ricci curvature.
\end{cor}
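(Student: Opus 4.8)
The plan is to argue by contradiction, playing off the non-vanishing of the first $L^2$-Betti number (already forced by Cor. \ref{wuwu}) against the vanishing that nonnegative Ricci curvature would impose through the Bochner technique. First I would record what the hypotheses hand us for free: since $(M,h)$ is a compact K\"ahler surface with infinite fundamental group, $\sigma(M)\neq 0$ and $\chi(M)\leq 0$, Cor. \ref{wuwu} applies and gives $2b_1^{(2)}(M)\geq b_2^{(2)}(M)>0$, hence $b_1^{(2)}(M)>0$. Crucially, the $L^2$-Betti numbers do not depend on the chosen Riemannian metric on $M$, so this strict inequality will survive no matter which metric we later use to compute harmonic forms on the universal covering.

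Next I would suppose, towards a contradiction, that the smooth manifold underlying $M$ carried a Riemannian metric $g$ with $\ric_g\geq 0$. Let $\pi:\tilde{M}\rightarrow M$ be the universal covering and set $\tilde{g}:=\pi^*g$. Then $(\tilde{M},\tilde{g})$ is complete, satisfies $\ric_{\tilde{g}}\geq 0$, and has infinite volume, the latter by the very same computation as in the first remark after Th. \ref{gamma2} using that $\Gamma=\pi_1(M)$ is infinite. Now I would invoke the Bochner--Weitzenb\"ock identity on $\tilde{M}$: for a harmonic $1$-form $\omega\in \mathcal{H}^1_2(\tilde{M},\tilde{g})$ one has $0=\Delta_1\omega=\nabla^*\nabla\omega+\ric_{\tilde{g}}(\omega)$, and pairing with $\omega$ and integrating gives $\int_{\tilde{M}}|\nabla\omega|^2\,\dvol_{\tilde{g}}+\int_{\tilde{M}}\ric_{\tilde{g}}(\omega,\omega)\,\dvol_{\tilde{g}}=0$. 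Since both integrands are nonnegative, $\omega$ is parallel, so $|\omega|_{\tilde{g}}$ is constant; as $\tilde{M}$ has infinite volume, the only parallel form of finite $L^2$-norm is $\omega=0$. Hence $\mathcal{H}^1_2(\tilde{M},\tilde{g})=0$, so $b_1^{(2)}(M)=\dim_{\Gamma}\mathcal{H}^1_2(\tilde{M},\tilde{g})=0$, contradicting $b_1^{(2)}(M)>0$. This contradiction shows that no metric of nonnegative Ricci curvature can exist on $M$.

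The step I expect to be the main obstacle is the analytic justification of the integration by parts in the Bochner identity on the complete, non-compact covering $\tilde{M}$: one must verify that an $L^2$ harmonic $1$-form has $\nabla\omega\in L^2$ and that the boundary contributions from a sequence of cut-off functions die in the limit. This is classical (a Gaffney--Yau type cut-off argument) and can simply be cited, much as the vanishing of $L^2$-harmonic functions of infinite volume was cited via \cite{Dod} after Th. \ref{gamma2}. If one prefers to avoid the analysis altogether, the obstacle can be bypassed by a purely group-theoretic route: $\ric_g\geq 0$ forces $\pi_1(M)$ to have polynomial growth (Milnor), hence to be virtually nilpotent and in particular infinite amenable, and the Cheeger--Gromov vanishing theorem then yields $b_1^{(2)}(M)=0$ directly, giving the same contradiction.
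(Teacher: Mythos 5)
Your proof is correct and follows essentially the same route as the paper: extract $b_1^{(2)}(M)>0$ from Cor. \ref{wuwu}, lift a hypothetical metric of nonnegative Ricci curvature to the infinite-volume universal covering, and contradict this via the vanishing of $L^2$-harmonic $1$-forms there. The only difference is that the paper simply cites \cite{Dod}, Cor. 1, for that vanishing, whereas you re-derive it by the Bochner argument (correctly flagging the cut-off justification); either is fine.
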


\begin{proof}
By Cor. \ref{wuwu} we know that $b_1^{(2)}(M)>0$. Assume now that on $M$ there is a Riemannian metric $g$ such that $\ric(g)\geq 0$. First we point out that $g$ must vanish somewhere because $M$ has infinite fundamental group. Let $\pi:\tilde{M}\rightarrow M$ be the universal covering of $M$. Clearly  $\tilde{g}:=\pi^*g$ satisfies $\ric(\tilde{g})\geq 0$. Hence $(\tilde{M},\tilde{g})$ is a complete Riemannian manifold with nonnegative Ricci curvature and infinite volume. By \cite{Dod} Cor. 1 we know that $\mathcal{H}_2^{1}(\tilde{M},\tilde{g})=0$ which in turn implies $b_1^{(2)}(M)=0$. Since we have already observed that the assumptions of this statement imply $b_1^{(2)}(M)>0$ we can conclude that $M$ carries no Riemannian  metric with nonnegative Ricci curvature.
\end{proof}

We have now the following proposition which provides some sufficient conditions in order to show that a compact K\"ahler surface is projective algebraic.

\begin{prop}
Let $(M,h)$ be a compact K\"ahler surface with non-positive sectional curvatures. Assume that $\sigma(M)>0$. Then the Kodaira dimension of $M$ is $2$. In particular $M$ is  projective algebraic.
\end{prop}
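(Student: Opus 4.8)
The plan is to combine the two positivity statements already obtained for $L^2$-invariants with the Enriques--Kodaira classification of surfaces; no new analysis is needed beyond what Prop.~\ref{luccio} provides. First I would record the two structural consequences of the curvature hypothesis. Since $(M,h)$ has non-positive sectional curvature, the Cartan--Hadamard theorem identifies the universal covering $\tilde M$ with $\mathbb{R}^4$; in particular $\tilde M$ is non-compact, so $\pi_1(M)$ is infinite. Moreover, as recalled in the proof of Cor.~\ref{cinzia}, non-positive sectional curvature forces $(M,h)$ to be K\"ahler parabolic, whence Th.~\ref{para} gives $\mathcal{H}^{1,0}_2(\tilde M,\tilde h)=0$ and therefore $h^{1,0}_{(2)}(M)=0$. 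Thus both hypotheses of Prop.~\ref{luccio} hold together with $\sigma(M)>0$, and I obtain at once
\[
\chi(M)>0,\qquad \chi(M,\mathcal{O}_M)>0 .
\]

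Next I would pin down the Kodaira dimension $\kappa(M)$. To rule out $\kappa(M)=-\infty$ I use that a compact K\"ahler surface with $\kappa=-\infty$ is rational or ruled: for a surface ruled over a curve of genus $g$ one has $\chi(\mathcal{O})=1-g$, so $\chi(M,\mathcal{O}_M)>0$ forces $g=0$ and hence $M$ rational; but rational surfaces are simply connected, contradicting the infinitude of $\pi_1(M)$. Therefore $\kappa(M)\geq 0$. To force $\kappa(M)=2$ I would extract positivity of $c_1^2$ from the signature. By the Hirzebruch signature theorem $\sigma(M)=\tfrac{1}{3}\bigl(c_1(M)^2-2c_2(M)\bigr)$ with $c_2(M)=\chi(M)$, so
\[
c_1(M)^2=3\sigma(M)+2\chi(M)>0,
\]
both summands being positive. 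Passing to the minimal model $M'$ of $M$, which exists since $\kappa(M)\geq 0$, each blow-down only increases the self-intersection of the canonical class, so $c_1(M')^2\geq c_1(M)^2>0$ while $\kappa(M')=\kappa(M)\geq 0$. A minimal compact K\"ahler surface of Kodaira dimension $0$ or $1$ satisfies $c_1^2=0$; hence $c_1(M')^2>0$ leaves only $\kappa(M')=2$, i.e.\ $\kappa(M)=2$.

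Finally, a compact complex surface of Kodaira dimension $2$ is of general type, hence has algebraic dimension $2$; being Moishezon, it is projective algebraic, since Moishezon surfaces are projective. This yields the assertion.

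I expect the only genuinely delicate point to be organizing the classification input correctly rather than any analytic difficulty: the new content is entirely contained in the inequalities $\chi(M)>0$ and $\chi(M,\mathcal{O}_M)>0$ coming from Prop.~\ref{luccio}, while the passage from the data $\bigl(\sigma(M)>0,\ \chi(M)>0,\ \chi(M,\mathcal{O}_M)>0,\ \pi_1(M)\ \text{infinite}\bigr)$ to $\kappa(M)=2$ rests on standard facts of the Enriques--Kodaira classification (the value of $\chi(\mathcal{O})$ for ruled surfaces, simple-connectedness of rational surfaces, the vanishing $c_1^2=0$ for minimal surfaces with $\kappa\le 1$, and projectivity of surfaces of general type) that must be invoked carefully in the a priori only K\"ahler setting.
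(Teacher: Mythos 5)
Your proof is correct, but it takes a genuinely different route from the paper's. The paper never touches the Enriques--Kodaira classification: it feeds three facts it has already established --- $h^{2,0}_{(2)}(M)>0$ with $\mathcal{H}^{2,0}_{2}(\tilde{M},\tilde{h})$ infinite dimensional (Prop.~\ref{tianjin}), $\chi(M,\mathcal{O}_M)>0$ (Prop.~\ref{luccio}), and the Steinness of $\tilde{M}$ coming from Wu's theorem for non-positively curved K\"ahler manifolds, so that $\tilde{M}$ contains no compact complex submanifolds of positive dimension --- directly into Gromov's argument in \cite{Gromov}, pp.~287--288, which outputs $\kappa(M)=2$. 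You instead use only the numerical consequences of Prop.~\ref{luccio} ($\chi(M)>0$ and $\chi(M,\mathcal{O}_M)>0$, available because Cartan--Hadamard makes $\pi_1(M)$ infinite and Th.~\ref{para} gives $h^{1,0}_{(2)}(M)=0$) and then run classical surface theory: $\chi(\mathcal{O}_M)>0$ together with infinite $\pi_1$ excludes the rational/ruled case, Hirzebruch's signature formula gives $c_1^2=3\sigma+2\chi>0$, and since $c_1^2$ only increases under passage to the minimal model while minimal K\"ahler surfaces with $\kappa\in\{0,1\}$ have $c_1^2=0$, only $\kappa=2$ survives; general type then yields projectivity. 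Each of these steps checks out. What your route buys is independence from Wu's theorem and from Gromov's $L^2$-argument, and in fact a stronger statement: you only use that $\pi_1(M)$ is infinite, $h^{1,0}_{(2)}(M)=0$ and $\sigma(M)>0$, so your argument proves $\kappa(M)=2$ for every compact K\"ahler, K\"ahler-parabolic surface of positive signature, not just the non-positively curved ones (for which the universal cover is Stein). What the paper's route buys is brevity and a tighter conceptual link to the $L^2$-theory, since the conclusion is extracted from the infinite-dimensional space of $L^2$ holomorphic $2$-forms on the cover rather than from the classification of surfaces.
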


\begin{proof}
Since $(M,h)$ has non-positive sectional curvatures we know  in particular that it is K\"ahler parabolic. Therefore, by Prop. \ref{tianjin}  and \ref{luccio} we know that $h^{2,0}_{(2)}(M)>0$, $\mathcal{H}^{2,0}_{2}(\tilde{M},\tilde{h})$ is infinite dimensional and $\chi(M,\mathcal{O}_M)>0$. Moreover by Wu's Theorem, see \cite{MaMa} pag. 283, we know that the universal covering of $M$, $\pi:\tilde{M}\rightarrow M$, is a Stein manifold. Therefore $\tilde{M}$ does not contain any compact complex submanifold of positive dimension. Summarizing $(M,h)$ is a compact K\"ahler surface with $\chi(M,\mathcal{O}_M)>0$ such that its universal covering $\pi:\tilde{M}\rightarrow M$ contains no compact complex submanifolds of positive dimension and $\mathcal{H}^{2,0}_{2}(\tilde{M},\tilde{h})$ is infinite dimensional. Now the conclusion follows by applying \cite{Gromov} pag. 287-288.
\end{proof}


Finally we have the last proposition of this section.

\begin{prop}
Let $(M,h)$ be a compact complex Hermitian surface with infinite fundamental group. Let $\mathcal{A}^{1,0}_M$ be the sheaf of holomorphic $(1,0)$-forms on $M$. Assume that $\chi(M)>0$ and that $\chi(M,\mathcal{A}^{1,0}_M)>0$. Let $\pi:\tilde{M}\rightarrow M$ be a non-compact Galois $\Gamma$-covering of $M$ and let  $\tilde{h}=\pi^*h$ be the pull-back metric on $\tilde{M}$.  Then $$h^{2,0}_{(2),\Gamma, \overline{\partial}}(M)>h^{2,1}_{(2),\Gamma, \overline{\partial}}(M).$$ In particular $h^{2,0}_{(2),\Gamma, \overline{\partial}}(M)>0$ and therefore the space of $L^2$-$(2,0)$-holomorphic forms on $(\tilde{M},\tilde{h})$ is infinite dimensional.
\end{prop}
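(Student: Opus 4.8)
The plan is to combine the two $L^2$-index formulas of the previous section with Serre duality in order to reduce the desired inequality to the vanishing of the single $L^2$-Hodge number $h^{0,0}_{(2),\Gamma,\overline{\partial}}(M)$. First I would record the consequences of Th.~\ref{gamma2} for $p=0,1,2$. Using the Dolbeault isomorphism $H^q(M,\mathcal{A}^{p,0}_M)\cong H^{p,q}_{\overline{\partial}}(M)$ these read
\begin{equation}
\chi(M,\mathcal{O}_M)=\sum_{q=0}^2(-1)^q h^{0,q}_{(2),\Gamma,\overline{\partial}}(M),\qquad \chi(M,\mathcal{A}^{1,0}_M)=\sum_{q=0}^2(-1)^q h^{1,q}_{(2),\Gamma,\overline{\partial}}(M),
\end{equation}
together with $\sum_{q}(-1)^q h^{2,q}_{(2),\Gamma,\overline{\partial}}(M)=\chi(M,\mathcal{A}^{2,0}_M)$, where $\mathcal{A}^{2,0}_M=K_M$. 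Since $(-1)^{p+q}=(-1)^p(-1)^q$, Th.~\ref{gammafrol} factorizes as $\chi(M)=\chi(M,\mathcal{O}_M)-\chi(M,\mathcal{A}^{1,0}_M)+\chi(M,\mathcal{A}^{2,0}_M)$. By Serre duality — equivalently by the identity $h^{2,q}_{(2),\Gamma,\overline{\partial}}(M)=h^{0,2-q}_{(2),\Gamma,\overline{\partial}}(M)$ of \eqref{hhh} — one has $\chi(M,\mathcal{A}^{2,0}_M)=\chi(M,\mathcal{O}_M)$, whence $\chi(M)=2\chi(M,\mathcal{O}_M)-\chi(M,\mathcal{A}^{1,0}_M)$. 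Solving and invoking the two hypotheses gives $\chi(M,\mathcal{O}_M)=\tfrac12\big(\chi(M)+\chi(M,\mathcal{A}^{1,0}_M)\big)>0$.

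Next I would reduce the target inequality. By \eqref{hhh} we have $h^{2,2}_{(2),\Gamma,\overline{\partial}}(M)=h^{0,0}_{(2),\Gamma,\overline{\partial}}(M)$, so expanding $\chi(M,\mathcal{A}^{2,0}_M)=h^{2,0}_{(2),\Gamma,\overline{\partial}}(M)-h^{2,1}_{(2),\Gamma,\overline{\partial}}(M)+h^{2,2}_{(2),\Gamma,\overline{\partial}}(M)$ and using $\chi(M,\mathcal{A}^{2,0}_M)=\chi(M,\mathcal{O}_M)$ yields
\begin{equation}
h^{2,0}_{(2),\Gamma,\overline{\partial}}(M)-h^{2,1}_{(2),\Gamma,\overline{\partial}}(M)=\chi(M,\mathcal{O}_M)-h^{0,0}_{(2),\Gamma,\overline{\partial}}(M).
\end{equation}
As $\chi(M,\mathcal{O}_M)>0$, it therefore suffices to prove $h^{0,0}_{(2),\Gamma,\overline{\partial}}(M)=0$.

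The hard part will be this last vanishing, which amounts to showing that the only $L^2$-holomorphic function on $(\tilde{M},\tilde{h})$ is $0$; indeed $\mathcal{H}^{0,0}_{2,\overline{\partial}}(\tilde{M},\tilde{h})=\ker\overline{\partial}_{0,0}$ consists precisely of such functions. Since $\pi$ is non-compact, $\tilde{M}$ has infinitely many sheets over the compact $M$, hence $\vol_{\tilde{h}}(\tilde{M})=\infty$, exactly as in the first remark after Th.~\ref{gamma2}. Because $M$ is compact, $(\tilde{M},\tilde{h})$ has bounded geometry, so a holomorphic $L^2$ function $f$ satisfies a uniform sub-mean-value estimate $|f(x)|^2\le C\int_{B(x,r_0)}|f|^2\,\dvol_{\tilde{h}}$ on balls of a fixed radius $r_0$; finiteness of $\int_{\tilde{M}}|f|^2\,\dvol_{\tilde{h}}$ then forces $|f(x)|\to 0$ at infinity. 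The genuine difficulty in the merely Hermitian (non-K\"ahler) setting is that $f$ need not be harmonic for the Riemannian Laplacian, so the argument used for $b^{(2)}_0$ does not apply verbatim. I would instead use that $u:=|f|^2$ is plurisubharmonic, so that $\tr_{\tilde{h}}(i\partial\overline{\partial}u)\ge 0$: this is a second order elliptic inequality with no zeroth order term, to which the strong maximum principle applies. A nonzero $f$ would make $u$ attain a positive interior maximum, its super-level sets being compact by the decay at infinity, forcing $u$ to be constant and contradicting $u\to 0$ at infinity. Hence $f\equiv 0$ and $h^{0,0}_{(2),\Gamma,\overline{\partial}}(M)=0$.

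Combining the two displays then gives $h^{2,0}_{(2),\Gamma,\overline{\partial}}(M)-h^{2,1}_{(2),\Gamma,\overline{\partial}}(M)=\chi(M,\mathcal{O}_M)>0$, which is the asserted inequality. In particular $h^{2,0}_{(2),\Gamma,\overline{\partial}}(M)>0$, so $\mathcal{H}^{2,0}_{2,\overline{\partial}}(\tilde{M},\tilde{h})\neq 0$; since this space is the kernel of $\overline{\partial}_{2,0}$, it is exactly the space of holomorphic $L^2$-$(2,0)$-forms on $(\tilde{M},\tilde{h})$, and by \cite{Roe} Lemma 15.10 positivity of its Von Neumann dimension forces it to be infinite dimensional.
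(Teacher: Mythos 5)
Your argument is correct and its skeleton is the same as the paper's: apply Th.~\ref{gammafrol} and Th.~\ref{gamma2} to get $\chi(M)=\sum_{q}(-1)^qh^{0,q}_{(2),\Gamma,\overline{\partial}}(M)-\chi(M,\mathcal{A}^{1,0}_M)+\sum_{q}(-1)^qh^{2,q}_{(2),\Gamma,\overline{\partial}}(M)$, use \eqref{hhh} to identify the $p=0$ and $p=2$ rows, conclude $\sum_{q}(-1)^qh^{2,q}_{(2),\Gamma,\overline{\partial}}(M)=\tfrac12\bigl(\chi(M)+\chi(M,\mathcal{A}^{1,0}_M)\bigr)>0$, and kill $h^{2,2}_{(2),\Gamma,\overline{\partial}}(M)=h^{0,0}_{(2),\Gamma,\overline{\partial}}(M)$; your phrasing through sheaf Euler characteristics and Serre duality is just a repackaging of the paper's manipulation of the same identities. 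The one genuine point of divergence is the vanishing $h^{0,0}_{(2),\Gamma,\overline{\partial}}(M)=0$: the paper disposes of it in one line by appealing to the infinite volume of $\tilde{M}$, implicitly invoking the earlier remark on $L^2$-harmonic functions, even though that remark concerns the Riemannian Laplacian and $M$ is here only Hermitian, so that a holomorphic function need not be $\Delta$-harmonic. You are right to flag this, and your substitute argument -- bounded geometry gives a uniform sub-mean-value inequality, hence decay of $|f|$ at infinity, and the strong maximum principle applied to the driftless second-order elliptic inequality $\tr_{\tilde{h}}(i\partial\overline{\partial}|f|^2)\geq 0$ forces $f\equiv 0$ -- is sound and makes this step self-contained where the paper's citation is, strictly speaking, not quite on point. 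The rest (positivity of $h^{2,0}_{(2),\Gamma,\overline{\partial}}(M)$ and infinite-dimensionality via \cite{Roe} Lemma 15.10) matches the paper exactly.
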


\begin{proof}
According to Th. \ref{gammafrol} we know that $$\chi(M)=\sum_{q=0}^{2}(-1)^qh^{0,q}_{(2),\Gamma,\overline{\partial}}(M)-\sum_{q=0}^{2}(-1)^qh^{1,q}_{(2),\Gamma,\overline{\partial}}(M)+\sum_{q=0}^{2}(-1)^qh^{2,q}_{(2),\Gamma,\overline{\partial}}(M).$$
On the other hand, by Th. \ref{gamma2}, we know that $\chi(M,\mathcal{A}_M^{1,0})=\sum_{q=0}^{2}(-1)^qh^{1,q}_{(2),\Gamma,\overline{\partial}}(M)$. Hence, as we assumed $\chi(M)>0$ and $\chi(M,\mathcal{A}^{1,0}_M)>0$, we can deduce that $\sum_{q=0}^{2}(-1)^qh^{0,q}_{(2),\Gamma,\overline{\partial}}(M)+\sum_{q=0}^{2}(-1)^qh^{2,q}_{(2),\Gamma,\overline{\partial}}(M)>0$. Moreover applying \eqref{hhh} we get $\sum_{q=0}^{2}(-1)^qh^{0,q}_{(2),\Gamma,\overline{\partial}}(M)=\sum_{q=0}^{2}(-1)^qh^{2,q}_{(2),\Gamma,\overline{\partial}}(M)$ and this tells us  that $\sum_{q=0}^{2}(-1)^qh^{2,q}_{(2),\Gamma,\overline{\partial}}(M)>0$. Since $(\tilde{M},\tilde{h})$ has infinite volume we have $0=\mathcal{H}^{0,0}_2(\tilde{M},\tilde{h})\cong \mathcal{H}^{2,2}_2(\tilde{M},\tilde{h})$ which in turn implies $0=h^{0,0}_{(2),\Gamma, \overline{\partial}}(M)=h^{2,2}_{(2),\Gamma, \overline{\partial}}(M)$. In conclusion we showed that  $h^{2,0}_{(2),\Gamma,\overline{\partial}}(M)-h^{2,1}_{(2),\Gamma,\overline{\partial}}(M)>0$ and therefore we have also the inequality $h^{2,0}_{(2),\Gamma,\overline{\partial}}(M)>0$. Finally, applying  Lemma 15.10 in \cite{Roe}, we have that $\mathcal{H}^{2,0}_{2,\overline{\partial}}(\tilde{M},\tilde{h})$ is infinite dimensional. 
\end{proof}

\section{Further remarks on the $L^2$-Hodge numbers}
This section is concerned with the invariance of the $L^2$-Hodge numbers through bimeromorphic maps. We start by raising the following question: 
\begin{itemize}
\item Let $M_1$ and $M_2$ be two bimeromorphic compact complex manifolds of complex dimension $m$ with infinite fundamental groups. Is it true that $h^{p,0}_{(2)}(M_1)=h^{p,0}_{(2)}(M_2)$ and $h^{0,q}_{(2)}(M_1)=h^{0,q}_{(2)}(M_2)$ for each $p=0,...,m$ and $q=0,...,0$ respectively?
\end{itemize}
Below we collect some positive partial answers.

\begin{teo}
\label{nonso}
Let $M_1$ and $M_2$ be two compact complex manifolds of complex dimension $m$. Assume that there exists a modification $\phi:M_1\rightarrow M_2$.
Then we have the following equality: $$h^{m,0}_{(2),\overline{\partial}}(M_1)=h^{m,0}_{(2),\overline{\partial}}(M_2).$$
\end{teo}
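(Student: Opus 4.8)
The plan is to show that the $L^2$-Hodge number $h^{m,0}_{(2),\overline{\partial}}$, which is the von Neumann dimension of the space of $L^2$ holomorphic $m$-forms on the relevant covering, is a birational/modification invariant. The key geometric fact is that a modification $\phi\colon M_1\to M_2$ is a proper holomorphic map that is a biholomorphism away from an analytic subset $E\subset M_1$ of positive codimension (the exceptional locus) and its image $\phi(E)\subset M_2$, also of positive codimension. Since we are dealing with $(m,0)$-forms, i.e. holomorphic sections of the canonical bundle $K$, the crucial point is that holomorphic $m$-forms extend across analytic subsets of codimension $\geq 1$ and pull back canonically under $\phi$. The pullback $\phi^*$ on holomorphic $m$-forms is the classical mechanism by which $h^{m,0}=h^0(M,K)$ is a birational invariant in the compact case; here I would upgrade this to the $L^2$ setting on the coverings.

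First I would set up the coverings compatibly. A modification $\phi\colon M_1\to M_2$ induces an isomorphism of fundamental groups (since it is a biholomorphism off a subset of real codimension $\geq 2$, by Zariski's connectedness / the fact that blow-downs don't change $\pi_1$), so the universal covers $\tilde M_1,\tilde M_2$ fit into a commutative diagram with a lift $\tilde\phi\colon \tilde M_1\to\tilde M_2$ that is $\Gamma$-equivariant for $\Gamma=\pi_1(M_1)\cong\pi_1(M_2)$, and $\tilde\phi$ is itself a modification (a biholomorphism off the preimage of the exceptional loci). I would pull back the Hermitian metrics; since the statement only involves von Neumann dimensions, which are independent of the choice of Hermitian metric, I am free to choose convenient metrics on $M_1$ and $M_2$.

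Next I would construct the isomorphism of the spaces of $L^2$ holomorphic $m$-forms. The map $\tilde\phi^*$ sends a holomorphic $m$-form on $\tilde M_2$ to a holomorphic $m$-form on $\tilde M_1$ (pullback of the top-degree holomorphic form is canonically defined and needs no metric), and it is $\Gamma$-equivariant. The two essential claims are: (i) $\tilde\phi^*$ preserves the $L^2$ condition in both directions, and (ii) $\tilde\phi^*$ is a bijection between $\mathcal H^{m,0}_{2,\overline\partial}(\tilde M_2,\tilde h_2)$ and $\mathcal H^{m,0}_{2,\overline\partial}(\tilde M_1,\tilde h_1)$. For (i), on the open locus where $\tilde\phi$ is a biholomorphism the $L^2$ norms of a form and its pullback are comparable (the Jacobian is bounded above and below on a fundamental-domain-compactly supported scale, using $\Gamma$-invariance to reduce to the compact base), and the complement has measure zero, so the $L^2$ integrals agree up to the biholomorphism change of variables. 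For (ii), surjectivity is the extension statement: given an $L^2$ holomorphic $m$-form $\alpha$ on $\tilde M_1$, push it forward by the biholomorphism on the good locus to get a holomorphic $m$-form on $\tilde M_2$ minus an analytic set; being locally $L^2$ (hence locally bounded in a suitable sense) it extends holomorphically across that codimension $\geq 1$ analytic set by the Riemann-type extension theorem for holomorphic forms, and the extension is again $L^2$ by the same change-of-variables estimate. Since $\tilde\phi^*$ is a $\Gamma$-equivariant isometry (or at least a $\Gamma$-equivariant bounded isomorphism with bounded inverse) between the two Hilbert $\Gamma$-modules of $L^2$ harmonic $(m,0)$-forms, it preserves von Neumann dimension, giving $h^{m,0}_{(2),\overline\partial}(M_1)=h^{m,0}_{(2),\overline\partial}(M_2)$.

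The main obstacle I anticipate is making the $L^2$-extension argument of step (ii) rigorous across the exceptional analytic set on the (typically non-compact) covering $\tilde M_1$: one must ensure that the classical Riemann extension theorem for $L^2$ holomorphic forms applies uniformly and that the extended form genuinely lies in the global $L^2$ space rather than merely being locally $L^2$. Because the exceptional locus $\tilde E$ is $\Gamma$-invariant and the metrics are pulled back from the compact base, I would handle this by covering $E\subset M_1$ by finitely many coordinate charts downstairs, lifting to $\Gamma$-translates upstairs, and checking the extension and the norm comparison chart-by-chart with constants independent of the translate; the $\Gamma$-invariance is what converts the local extension into a controlled global $L^2$ statement. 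The identification of $\mathcal H^{m,0}_{2,\overline\partial}$ with the space of $L^2$ holomorphic $m$-forms (i.e. that $\overline\partial$-harmonic $(m,0)$-forms are exactly the $\overline\partial$-closed ones, hence holomorphic) is immediate since there are no $(m,-1)$-forms, so $\mathcal H^{m,0}_{2,\overline\partial}=\ker\overline\partial_{m,0}\cap L^2$.
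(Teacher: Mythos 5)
Your overall strategy coincides with the paper's: show that $\phi$ induces an isomorphism of fundamental groups, lift to a $\Gamma$-equivariant map $\tilde\phi$ of universal covers, and identify the two Hilbert $\Gamma$-modules of $L^2$ holomorphic $m$-forms via $\tilde\phi^*$, the exceptional loci having measure zero. However, one step of your justification is wrong as stated and must be replaced. You claim that on the locus where $\tilde\phi$ is a biholomorphism ``the Jacobian is bounded above and below'' after reducing to the compact base, so that the $L^2$ norms of a form and its pullback are comparable. This is false for a modification: the pulled-back metric $\phi^*g_2$ degenerates along the exceptional set in $M_1$ (for a blow-up the Jacobian vanishes there), so $\dvol_{g_1}$ and $\dvol_{\phi^*g_2}$ are not mutually bounded, and for general $(p,q)$-forms the two $L^2$ norms are genuinely inequivalent. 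What saves the argument --- and what the paper isolates as Prop.~\ref{brick} --- is that for $(m,0)$-forms the $L^2$ norm is independent of the Hermitian metric, since $|\omega|^2_h\,\dvol_h$ equals a universal constant times $\omega\wedge\overline{\omega}$; hence pullback by a biholomorphism is an exact isometry on $L^2\Omega^{m,0}$ with no Jacobian estimate needed. You gesture at this (``the pullback \dots needs no metric'') but then base the norm comparison on the false Jacobian bound; the proof works only because the bidegree is $(m,0)$, and this is the point that must carry the argument.

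A secondary difference: to identify the $L^2$ holomorphic $m$-forms on the covering minus the exceptional set with those on the whole covering, you invoke an $L^2$ Riemann extension theorem, whereas the paper proves that the complement of an analytic subset of a compact Hermitian manifold is parabolic and deduces that the maximal and minimal closed extensions of the Hodge--Dolbeault operator coincide there (Props.~\ref{monacchi}, \ref{occaminonna}, \ref{lit}). For bidegree $(m,0)$ your route is legitimate --- indeed the paper shows via normality that the indeterminacy set $W\subset M_2$ has complex codimension at least $2$, so even ordinary Hartogs-type extension applies on that side --- and it is arguably more elementary; the paper's functional-analytic machinery is what one would need to go beyond $(m,0)$. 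Provided you repair the norm comparison as above, your proof goes through and is essentially the paper's.
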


In order to prove the above theorem we need the following propositions.

\begin{prop}
\label{group}
In the setting of Th. \ref{nonso}. Let $\pi_1(M_1)$ and $\pi_1(M_2)$ be the fundamentals group of $M_1$ and $M_2$ respectively. Then $\phi_*:\pi_1(M_1)\rightarrow \pi_1(M_2)$, the map that $\phi$ induces between  $\pi_1(M_1)$ and $\pi_1(M_2)$, is an isomorphism.
\end{prop}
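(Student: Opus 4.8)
The plan is to compare the fundamental groups of $M_1$ and $M_2$ with those of the open sets on which $\phi$ is a biholomorphism. By definition of a modification there is a nowhere dense analytic subset $S\subset M_2$ (the center) such that, setting $E:=\phi^{-1}(S)$, the restriction $\phi|_{M_1\setminus E}:M_1\setminus E\to M_2\setminus S$ is a biholomorphism; both $E$ and $S$ are analytic of complex codimension $\geq 1$, hence their complements are connected, and I fix compatible base points $x_0\in M_1\setminus E$ and $\phi(x_0)\in M_2\setminus S$.

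The key geometric input I would establish first is that $\cod_{M_2}S\geq 2$, i.e. that $S$ has no divisorial component. Indeed, suppose $D$ were an irreducible component of $S$ of complex codimension $1$. Over a generic point of $D$ the fiber of $\phi$ is positive-dimensional, so by the fiber-dimension theorem $\dim_{\mathbb{C}}\phi^{-1}(D)\geq\dim_{\mathbb{C}}D+1=m$; but $\phi^{-1}(D)$ is a proper analytic subset of $M_1$, hence of dimension $\leq m-1$, a contradiction. This is the familiar statement that a proper modification of a smooth (hence normal) complex manifold is an isomorphism in codimension one.

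Next I would record four facts and organize them into a commutative square. First, since $S$ has real codimension $\geq 4$, the inclusion $M_2\setminus S\hookrightarrow M_2$ induces an isomorphism $i_*:\pi_1(M_2\setminus S)\xrightarrow{\cong}\pi_1(M_2)$, because removing an analytic subset of complex codimension $\geq 2$ does not change $\pi_1$. Second, the biholomorphism $\phi|_{M_1\setminus E}$ induces an isomorphism $f_*:\pi_1(M_1\setminus E)\xrightarrow{\cong}\pi_1(M_2\setminus S)$. Third, since $E$ has complex codimension $\geq 1$, the inclusion $M_1\setminus E\hookrightarrow M_1$ induces a surjection $j_*:\pi_1(M_1\setminus E)\twoheadrightarrow\pi_1(M_1)$. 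Fourth, as maps $M_1\setminus E\to M_2$ one has $\phi\circ j=i\circ\phi|_{M_1\setminus E}$, whence $\phi_*\circ j_*=i_*\circ f_*$.

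The conclusion is then pure group theory. By the first, second and fourth facts the composite $\phi_*\circ j_*=i_*\circ f_*$ is an isomorphism. Hence $j_*$ is injective: if $j_*(\alpha)=1$ then $\phi_*\bigl(j_*(\alpha)\bigr)=1$, and injectivity of $\phi_*\circ j_*$ forces $\alpha=1$. Combined with the surjectivity of $j_*$, this shows $j_*$ is an isomorphism, and therefore $\phi_*=(i_*\circ f_*)\circ j_*^{-1}$ is an isomorphism, as claimed. I expect the only genuine obstacle to be the codimension statement of the second step; the three statements about $\pi_1$ in the third step are standard facts on complements of analytic subsets in a manifold and may simply be quoted.
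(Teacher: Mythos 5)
Your argument has exactly the same skeleton as the paper's proof: reduce to the codimension statement $\cod_{M_2}S\geq 2$, then combine the isomorphism $\pi_1(M_2\setminus S)\cong\pi_1(M_2)$, the surjection $\pi_1(M_1\setminus E)\twoheadrightarrow\pi_1(M_1)$, the biholomorphism on the complements, and the commutative square; your closing group-theoretic step (deducing that $j_*$ is also injective) is a slightly more explicit version of the paper's "isomorphism composed with surjection" conclusion. The one place you diverge is the step you yourself flag as the obstacle, and there your argument as written has a gap: the premise that the fiber of $\phi$ over a generic point of a codimension-one component $D$ of $S$ is positive-dimensional is not justified, and it is simply false for an arbitrary nowhere dense $S$ with the biholomorphism property (enlarge the center of a point blow-up to a divisor through that point). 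You must first take $S$ to be the minimal such set, and even then the positive-dimensionality of the generic fiber over $D$ is precisely where normality enters: one needs that a proper, generically one-to-one map onto a normal space with a finite fiber over a point is a local biholomorphism there (Zariski's main theorem), so that a generically finite fiber over $D$ would contradict minimality of $S$. The paper packages this differently, viewing $\phi^{-1}$ as a meromorphic map $M_2\dashrightarrow M_1$ with indeterminacy locus $W$ and quoting Ueno's theorem that the indeterminacy locus of a meromorphic map from a normal space has complex codimension at least $2$. The two routes carry the same mathematical content; to make yours complete you should either supply the Zariski-main-theorem argument (including connectedness or finiteness of the relevant fibers) or simply cite the codimension statement for indeterminacy loci as the paper does.
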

\begin{proof}
This is a well known result. For the reader's convenience we give a proof. By the assumptions we know that there exists an analytic subset $W\subset M_2$ such that  $\phi|_{M_1\setminus Z}:M_1\setminus Z\rightarrow M_2\setminus W$ is a biholomorphism where $Z:=\phi^{-1}(W)$. Consider the inverse of $\phi$ as a map from $M_2$ to the power set of $M_1$. This is a meromorphic map, see \cite{ThP} pag. 289, whose set of points of indeterminacies is $W$. Since $M_1$ and $M_2$ are nonsingular they are in particular normal  and therefore, thanks to \cite{UK} Th. 2.5, we can conclude that $W$ has complex codimension greater or equal than 2. Since $Z$ and $W$ are  analytic subsets of $M_1$ and $M_2$ respectively we can decompose them as $Z=Q_1\cup Q_2\cup...\cup Q_l$, $W=N_1\cup N_2\cup...\cup N_n$ for some $l,n\in \mathbb{N}$ where $Q_a$ is a complex submanifold of $M_1$ and $N_k$ is a complex submanifold of $M_2$ for each $a=1,...,l$ and $k=1,...,n$ respectively. Moreover $Q_b\cap Q_d=\emptyset$ and  $N_r\cap N_s=\emptyset$ whenever  $b\neq d$ and $r\neq s$. Finally, thanks to the above remark, we have that the complex codimension of $N_k$ is greater or equal than 2 for each $k=0,...,n$. We recall now a classical application of Thom's transversality theorem: Let $P$ be a smooth manifold and let $L\subset P$ be a submanifold. Then the morphism $\pi_1(P\setminus L)\rightarrow \pi_1(P)$ induced by the inclusion $P\setminus L\hookrightarrow P$ is surjective if $\dim(P)-\dim(L)\geq 2$ and is an isomorphism if $\dim(P)-\dim(L)\geq 3$. Using this property it is not difficult to show that the inclusion $i:M_2\setminus (N_1\cup N_2\cup...\cup N_n)\rightarrow M_2$ induces an isomorphism $i_*:\pi_1(M_2\setminus (N_1\cup N_2\cup...\cup N_n))\rightarrow \pi_1(M_2)$. For the same reasons we can prove that the inclusion $j:M_1\setminus Z\rightarrow M_1$ induces a surjective morphism $j_*:\pi_1(M_1\setminus Z)\rightarrow \pi_1(M_1)$. Consider now the map $\phi|_{M_1\setminus Z}:M_1\setminus Z\rightarrow M_2\setminus W$. Clearly we have $i\circ \phi|_{M_1\setminus Z}:M_1\setminus Z\rightarrow M_2=\phi\circ j:M_1\setminus Z\rightarrow M_2$. On the other hand $(i\circ \phi|_{M_1\setminus Z})_*:\pi_1(M_1\setminus Z)\rightarrow \pi_1(M_2)$ is an isomorphism and $j_*:\pi_1(M_1\setminus Z)\rightarrow \pi_1(M_1)$ is a surjective morphism. Hence we can thus conclude  that $\phi_*:\pi_1(M_1)\rightarrow \pi_1(M_2)$ is an isomorphism as desired.
\end{proof}

\begin{prop}
\label{brick}
Let $M$ be a complex manifold of complex dimension $m$  and let $g$ and $h$ be two Hermitian metrics on $M$. Then we have an equality of Hilbert spaces $$L^2\Omega^{m,0}(M,g)=L^2\Omega^{m,0}(M,h).$$ 
\end{prop}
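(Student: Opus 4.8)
The plan is to show that for an $(m,0)$-form the density $|\omega|_h^2\,\dvol_h$ that defines the $L^2$-norm does not depend on the Hermitian metric $h$ at all; the two Hilbert spaces will then coincide not merely up to quasi-isometry but as sets carrying the very same inner product.

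First I would argue locally. Fix a holomorphic coordinate chart $z^1,\dots,z^m$ and write an arbitrary $(m,0)$-form as $\omega = f\,dz^1\wedge\cdots\wedge dz^m$. Let $h_{i\bar j}$ denote the components of $h$ on $T^{1,0}M$ and set $G:=\det(h_{i\bar j})$. Two elementary computations are needed. On one hand, the induced Hermitian metric on $\Lambda^{m,0}$ is governed by the inverse matrix $h^{i\bar j}$ (the metric on $(1,0)$-covectors), so $|dz^1\wedge\cdots\wedge dz^m|_h^2 = \det(h^{i\bar j}) = G^{-1}$ and hence $|\omega|_h^2 = |f|^2 G^{-1}$. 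On the other hand the volume form is $\dvol_h = G\cdot\bigl(\tfrac{i}{2}\bigr)^m\, dz^1\wedge d\bar z^1\wedge\cdots\wedge dz^m\wedge d\bar z^m$. Multiplying, the factor $G$ cancels exactly:
$$|\omega|_h^2\,\dvol_h = |f|^2\,\bigl(\tfrac{i}{2}\bigr)^m\, dz^1\wedge d\bar z^1\wedge\cdots\wedge dz^m\wedge d\bar z^m,$$
an expression in which $h$ has disappeared entirely. Equivalently, up to a universal constant this density equals $\omega\wedge\bar\omega$, which is manifestly metric independent.

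Second, I would observe that $|\omega|_h^2\,\dvol_h$ is a globally well-defined measure on $M$, and since in every chart it agrees with the expression above, the pointwise identity $|\omega|_h^2\,\dvol_h = |\omega|_g^2\,\dvol_g$ holds on all of $M$ for any two Hermitian metrics $g,h$. A partition of unity subordinate to a holomorphic atlas then upgrades this to the global equality $\|\omega\|_{L^2\Omega^{m,0}(M,h)} = \|\omega\|_{L^2\Omega^{m,0}(M,g)}$ for every measurable $(m,0)$-form. Applying the same cancellation to the mixed density attached to $\omega_1\wedge\bar\omega_2$ shows moreover that the two inner products coincide, so the Hilbert spaces are literally equal.

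The computation is routine; the only point demanding care is the bookkeeping of conventions in the two local formulas, namely that the norm on top $(1,0)$-forms is $\det(h^{i\bar j})$ while the volume form carries the reciprocal factor $\det(h_{i\bar j})$. The entire content of the proposition is precisely that these two determinant factors are exact reciprocals, so that their product is metric free. It is this \emph{exactness}, as opposed to a mere quasi-isometry estimate, that yields equality of Hilbert spaces rather than only an isomorphism, and it is a feature special to the top holomorphic bidegree $(p,q)=(m,0)$.
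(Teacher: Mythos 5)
Your proof is correct and is essentially the paper's argument: the paper simply delegates the determinant cancellation $|\omega|_h^2\,\dvol_h=|f|^2\det(h^{i\bar j})\det(h_{i\bar j})\,\bigl(\tfrac{i}{2}\bigr)^m dz^1\wedge d\bar z^1\wedge\cdots\wedge dz^m\wedge d\bar z^m$ to the computations in Grant Melles--Milman (p.~145), which is exactly what you carry out explicitly, identifying the density with a multiple of $\omega\wedge\bar\omega$. No gap; your version just makes the cited computation self-contained.
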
 

\begin{proof}
The statement   follows  by the computations carried out in \cite{GMMI} pag. 145. 
\end{proof}

\begin{prop}
\label{monacchi}
Let $(M,g)$ be a compact complex Hermitian manifold and let $Z\subset M$ be an analytic subset. Then $(M\setminus Z,g|_{M\setminus Z})$ is parabolic. In other words there exists a sequence of functions ${\phi_n}\subset C^{\infty}_c(M\setminus Z)$ such that 
\begin{enumerate}
\item $0\leq \phi_n\leq 1$ for each $n\in \mathbb{N}$,
\item $\lim\phi_n=1$ pointwise as $n\rightarrow \infty$,
\item $\lim \|d\phi_n\|_{L^2\Omega^1(M\setminus Z,g|_{M\setminus Z})}=0$ as $n\rightarrow \infty$.
\end{enumerate}
\end{prop}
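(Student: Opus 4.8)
The goal is to show that $(M\setminus Z, g|_{M\setminus Z})$ is parabolic, meaning we must construct a sequence of compactly supported cut-off functions $\phi_n$ satisfying the three listed properties. The essential difficulty is concentrated entirely on the analytic subset $Z$, which has a complicated local structure and may be singular, so I cannot simply use a single smooth function; the plan is to build the cut-offs by exploiting the fact that $Z$ has real codimension at least $2$ and, more importantly, that analytic subsets carry a natural logarithmic cut-off adapted to the distance from $Z$.

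The plan is to proceed by a partition of unity reduction to local models. First I would cover the compact manifold $M$ by finitely many coordinate charts; away from $Z$ there is nothing to do, so the real work is localizing near points of $Z$. In each such chart, $Z$ is cut out by finitely many holomorphic functions $f_1,\dots,f_r$, and I would set $\rho := \sum_i |f_i|^2$, a nonnegative function vanishing exactly on $Z$. The standard parabolicity construction then uses logarithmic cut-offs: fix a smooth function $\chi:\mathbb{R}\to[0,1]$ with $\chi(t)=0$ for $t\le 1$ and $\chi(t)=1$ for $t\ge 2$, and define locally $\phi_n := \chi\bigl(\log|\log\rho|/\log n\bigr)$ or a similar composition designed so that $\phi_n$ is supported away from $Z$, tends to $1$ pointwise off $Z$, and has Dirichlet energy $\|d\phi_n\|_{L^2}^2$ tending to $0$. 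The reason such logarithmic cut-offs work is the classical fact that the function $\log\rho$, whose gradient behaves like $1/|f|$ near $Z$, is barely non-integrable in $L^2$ in the normal directions to an analytic set of codimension $\ge 1$; composing once more with a logarithm and slicing dyadically yields gradients whose $L^2$-norm is controlled by $1/\log n$.

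The key steps, in order, are: (1) reduce to a local computation via a finite smooth partition of unity subordinate to a cover of $M$, keeping track that only the pieces meeting $Z$ contribute; (2) in each local chart, express $Z$ as the common zero set of holomorphic functions and introduce the auxiliary exhaustion function $\rho$; (3) define the logarithmic cut-offs $\phi_n$ as compositions of $\rho$ with a suitable real function, and verify properties (1) and (2), which are immediate from the construction; (4) estimate the Dirichlet integral $\int_{M\setminus Z}|d\phi_n|^2\,\dvol_g$ by reducing, via resolution of singularities or a direct stratification, to the model estimate near a smooth hypersurface or, using Hironaka's resolution, to a normal-crossings divisor where the integral is computed explicitly in polar-type coordinates. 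Since $M$ is compact, the metric $g$ is comparable to the Euclidean metric on each chart, so the estimate is independent of $g$ up to a constant, giving property (3).

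The main obstacle will be step (4), the gradient estimate, precisely because $Z$ may be singular: near singular points the naive computation with $\rho = \sum|f_i|^2$ does not immediately give the clean codimension bound. I expect the cleanest route is to invoke Hironaka's resolution of singularities to pull back to a situation where $Z$ becomes a simple normal crossings divisor, compute the energy integral there in adapted coordinates where it factors into one-dimensional integrals of the form $\int_0^\epsilon |\chi'(\cdots)|^2\, r^{-1}\,dr$ that are seen to vanish as $n\to\infty$, and then push the estimate back down; the pushforward is harmless because the resolution is a proper modification that is a biholomorphism away from $Z$. Alternatively one can avoid resolution by stratifying $Z$ by smooth strata and handling each codimension separately, but the bookkeeping near lower-dimensional strata is more delicate. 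Either way, once the local energy estimate $\|d\phi_n\|_{L^2}\to 0$ is established, assembling the global $\phi_n$ via the partition of unity and noting that the cutoffs agree where the local sets overlap completes the proof.
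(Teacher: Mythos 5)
The paper does not actually write out a proof of this proposition: it simply delegates to the parabolicity results of Bei--Gueneysu and Ruppenthal (\cite{BeGu}, \cite{JRU}). Your sketch reconstructs what is essentially the standard argument behind those references --- local defining functions, the iterated-logarithm cut-off $\log(-\log\rho)$ with $\rho=\sum_i|f_i|^2$ (the double logarithm is exactly what is needed at real codimension $2$, the borderline case for $2$-parabolicity), and a reduction of the energy estimate to a normal crossings situation via embedded resolution, where $\sigma^*\rho$ becomes a monomial $|w_1|^{2a_1}\cdots|w_k|^{2a_k}$ times a unit and the radial integrals $\int_0^{\epsilon}\frac{dr}{r(\log r)^2(\cdots)}$ converge. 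So the strategy is sound and is the intended one.

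Two points in your step (4) and in the final assembly deserve more care than you give them. First, when you ``push the estimate back down'' through the resolution $\sigma$, the pulled-back metric $\sigma^*g$ is only positive \emph{semi}definite (it degenerates along the exceptional divisor), so it is not quasi-isometric to a smooth metric $\tilde g$ upstairs, and the Dirichlet energy $\int|d\phi|^2_g\dvol_g$ is \emph{not} monotone under a one-sided bound $g\leq C\tilde g$ if you estimate the gradient norm and the volume form separately (they move in opposite directions). The comparison does go through, but only via the complex-geometric identity $|d\phi|^2_g\dvol_g=c_m\, i\,\partial\phi\wedge\overline{\partial}\phi\wedge\omega_g^{m-1}$ for real $\phi$, which is monotone in $\omega_g$ because wedging with the positive form $i\,\partial\phi\wedge\overline{\partial}\phi$ preserves positivity; this (or an equivalent device) must be stated, otherwise the reduction to $\tilde g$ is unjustified. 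Second, the local cut-offs built from different defining functions do \emph{not} agree on overlaps, so ``noting that the cutoffs agree'' is not available; one should either glue via $\phi_n=\sum_\alpha\psi_\alpha\phi_n^{(\alpha)}$ and control the extra term $\sum_\alpha(\phi_n^{(\alpha)}-1)\,d\psi_\alpha$ (which tends to $0$ in $L^2$ by dominated convergence since $\sum_\alpha d\psi_\alpha=0$), or work from the outset with a single globally defined $\rho$ obtained by patching the local $\sum_i|f_i|^2$ with a partition of unity. With these two repairs your argument is complete and consistent with what the cited references prove.
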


\begin{proof}
This follows arguing as in \cite{BeGu} or in \cite{JRU}.
\end{proof}

Let's go back now to the setting of Th. \ref{nonso}. As remarked in the proof of Prop. \ref{group} we know that there exists an analytic subset $W\subset M_2$ such that $\phi|_{M_1\setminus Z}:M_1\setminus Z\rightarrow M_2\setminus W$ is a biholomorphism where $Z:=\phi^{-1}(W)$. Let $\pi_1:\tilde{M}_1\rightarrow M_1$ be the universal covering of $M_1$, let $g_1$ be any Hermitian metric on $M_1$, let $\tilde{g}_1:=\pi_{1}^*g_1$,  let $X:=\pi_{1}^{-1}(Z)$ and finally let $A:=\tilde{M}_1\setminus X$. We have the following proposition:

\begin{prop}
\label{occaminonna}
In the setting described above, the Hodge-Dolbeault operator 
\begin{equation}
\label{pluf}
\overline{\partial}_{m}+\overline{\partial}_{m}^t:L^2\Omega^{m,\bullet}(A,\tilde{g}_1|_A)\rightarrow L^2\Omega^{m,\bullet}(A,\tilde{g}_1|_A)
\end{equation}
with  domain given by $\Omega^{m,\bullet}_c(A)$ is essentially self-adjoint. Moreover the unique closed extension of \eqref{pluf} coincides with the operator
\begin{equation}
\label{pluff}
\overline{\partial}_{m}+\overline{\partial}_{m}^t:L^2\Omega^{m,\bullet}(\tilde{M}_1,\tilde{g}_1)\rightarrow L^2\Omega^{m,\bullet}(\tilde{M}_1,\tilde{g}_1)
\end{equation}
where \eqref{pluff} is the unique closed extension of $\overline{\partial}_{m}+\overline{\partial}_{m}^t:\Omega^{m,\bullet}_c(\tilde{M}_1,\tilde{g}_1)\rightarrow \Omega^{m,\bullet}_c(\tilde{M}_1,\tilde{g}_1)$ viewed as an unbounded and densely defined operator acting on $L^2\Omega^{m,\bullet}(\tilde{M}_1,\tilde{g}_1)$.
\end{prop}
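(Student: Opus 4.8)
The plan is to reduce both assertions to a single removable-singularity statement across $X$ and to settle that statement by a Gaffney-type cutoff argument powered by the parabolicity of the complement of an analytic set. Write $D:=\overline{\partial}_m+\overline{\partial}_m^t$, let $D_{A,\min}$ denote the closure of \eqref{pluf} with domain $\Omega^{m,\bullet}_c(A)$, set $D_{A,\max}:=(D_{A,\min})^*$, and let $D_{\tilde M_1}$ be the operator \eqref{pluff}. Since $g_1$ lives on the compact manifold $M_1$, the pull-back metric $\tilde g_1$ is complete, so by the standard completeness criterion (cf.\ \cite{RoS}, \cite{JW}) the Hodge-Dolbeault operator $D_{\tilde M_1}$ is essentially self-adjoint; thus \eqref{pluff} is genuinely its unique closed extension and is self-adjoint. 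Moreover $X=\pi_1^{-1}(Z)$ is an analytic subset of $\tilde M_1$, hence has measure zero, so that $L^2\Omega^{m,\bullet}(A,\tilde g_1|_A)=L^2\Omega^{m,\bullet}(\tilde M_1,\tilde g_1)$ as Hilbert spaces. Because $\Omega^{m,\bullet}_c(A)\subset\Omega^{m,\bullet}_c(\tilde M_1)$, passing to closures gives $D_{A,\min}\subseteq D_{\tilde M_1}$, and taking adjoints of this inclusion (using that $D_{\tilde M_1}$ is self-adjoint) yields the chain $D_{A,\min}\subseteq D_{\tilde M_1}\subseteq D_{A,\max}$. Consequently it suffices to prove the single inclusion $D_{A,\max}\subseteq D_{\tilde M_1}$: this forces $D_{A,\max}=D_{\tilde M_1}$, hence $D_{A,\min}=(D_{A,\max})^*=D_{\tilde M_1}$, which simultaneously gives essential self-adjointness of \eqref{pluf} and the identification of its closure with \eqref{pluff}.

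The inclusion $D_{A,\max}\subseteq D_{\tilde M_1}$ is exactly the assertion that a form satisfying $Du=w$ weakly on $A$ continues to satisfy it weakly across $X$. Let $u\in\mathrm{Dom}(D_{A,\max})$ with $w:=D_{A,\max}u$, and fix a test form $\eta\in\Omega^{m,\bullet}_c(\tilde M_1)$. By the parabolicity of $(A,\tilde g_1|_A)$ choose cutoffs $\phi_n\in C^\infty_c(A)$ with $0\le\phi_n\le1$, $\phi_n\to1$ pointwise and $\|d\phi_n\|_{L^2}\to0$. Since $\phi_n\eta\in\Omega^{m,\bullet}_c(A)$, the defining relation of the adjoint gives $\langle D(\phi_n\eta),u\rangle=\langle \phi_n\eta,w\rangle$. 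Writing $D(\phi_n\eta)=\phi_n\,D\eta+c(d\phi_n)\eta$, where $c(d\phi_n)\eta$ is the zeroth-order symbol term obeying $|c(d\phi_n)\eta|\le C|d\phi_n|\,|\eta|$, the cross term is controlled on the fixed compact set $K:=\supp\eta$ by
\begin{equation}
\nonumber
|\langle c(d\phi_n)\eta,u\rangle|\le C\,\|\eta\|_\infty\int_K|u|\,|d\phi_n|\,\dvol_{\tilde g_1}\le C\,\|\eta\|_\infty\,\|u\|_{L^2(K)}\,\|d\phi_n\|_{L^2}\longrightarrow0.
\end{equation}
Letting $n\to\infty$ and using dominated convergence on the remaining terms (recall $|\phi_n|\le1$, $\phi_n\to1$, and that $D\eta$ and $\eta$ are smooth of compact support) we obtain $\langle D\eta,u\rangle=\langle \eta,w\rangle$ for every $\eta\in\Omega^{m,\bullet}_c(\tilde M_1)$. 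Hence $u\in\mathrm{Dom}(D_{\tilde M_1}^*)=\mathrm{Dom}(D_{\tilde M_1})$ with $D_{\tilde M_1}u=w$, which is the desired inclusion and, by the reduction above, completes the proof modulo the construction of the cutoffs.

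The main obstacle is precisely the availability of the parabolic cutoffs $\phi_n$ on the non-compact covering $A=\tilde M_1\setminus X$, since Prop.\ \ref{monacchi} is phrased for the complement of an analytic set in a compact manifold. Two features make this manageable. First, since the test form $\eta$ has compact support, only the behaviour of $\phi_n$ over the fixed compact set $K$ matters, so one needs the cutoffs only near the compact piece $X\cap K$; parabolicity of the complement of an analytic set is a local phenomenon near the set, and $\pi_1$ is a local isometry, so the construction of \cite{BeGu} and \cite{JRU} underlying Prop.\ \ref{monacchi} can be transplanted to a neighbourhood of $X$ in $\tilde M_1$. Second --- and this is what makes the whole scheme robust --- the cross-term estimate uses only the $L^2$-smallness of $d\phi_n$ together with the boundedness and compact support of the \emph{test} form $\eta$; no $L^\infty$ bound or extra regularity on $u$ itself is required, which is exactly why a mere $L^2$ gradient bound on the cutoffs suffices. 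Once these cutoffs are in place the argument above closes, establishing both the essential self-adjointness of \eqref{pluf} and the coincidence of its closure with \eqref{pluff}.
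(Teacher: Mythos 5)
Your argument is correct and rests on the same two pillars as the paper's proof --- completeness of $\tilde{g}_1$ and the parabolic cutoffs of Prop. \ref{monacchi} --- but you run the cutoff argument in the dual direction. The paper proves the inclusion $\Omega^{m,\bullet}_c(\tilde{M}_1)\subset \mathcal{D}((\overline{\partial}_{m}+\overline{\partial}_{m}^t)_{\min})$ by approximating a fixed $\omega\in\Omega^{m,\bullet}_c(\tilde{M}_1)$ by $\tilde{\phi}_i\omega$ in the graph norm (this uses $\omega\in L^{\infty}$, which is automatic for a compactly supported smooth form), concludes via completeness that the minimal extension on $A$ equals \eqref{pluff}, and only then obtains $\max=\min$ by taking adjoints of a self-adjoint operator. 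You instead prove $D_{A,\max}\subseteq D_{\tilde{M}_1}$ directly by placing the cutoff on the \emph{test} form, so your cross-term estimate needs only $u\in L^2$, and the sandwich $D_{A,\min}\subseteq D_{\tilde{M}_1}\subseteq D_{A,\max}$ then closes the argument; the two routes are adjoint to one another and of equal strength. The one point you leave slightly open --- producing the cutoffs on $A$ rather than on $M_1\setminus Z$ --- is settled in the paper more simply than by ``transplanting'' the construction of \cite{BeGu} or \cite{JRU}: one just lifts, setting $\tilde{\phi}_i:=\phi_i\circ\pi_1$. Since the relevant compactly supported factor (the paper's $\omega$, or your $\eta$) meets only finitely many $\Gamma$-translates of the closure of a fundamental domain, $\|d\tilde{\phi}_i\|_{L^2}$ computed over that support is bounded by a fixed multiple of $\|d\phi_i\|_{L^2\Omega^1(M_1\setminus Z,g_1|_{M_1\setminus Z})}$ and hence tends to $0$. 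With that substitution your proof is complete.
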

\begin{proof}
We adapt to our case \cite{FBei} Prop. 3.1 and so we will be brief.  As $\tilde{M}_1\setminus A$ has measure zero in $\tilde{M}_1$ we have an equality of Hilbert spaces $L^2\Omega^{m,\bullet}(\tilde{M}_1,\tilde{g}_1)=L^2\Omega^{m,\bullet}(A,\tilde{g}_1|_A)$. Let us label by $\mathcal{D}(\overline{\partial}_{m} + \overline{\partial}^t_{m})$, $\mathcal{D}((\overline{\partial}_{m} + \overline{\partial}^t_{m})_{\min})$ and $\mathcal{D}((\overline{\partial}_{m} + \overline{\partial}^t_{m})_{\max})$ respectively  the domain of \eqref{pluff}, the minimal domain of \eqref{pluf} and the maximal domain of \eqref{pluf}. 
As a first step we want to show that $\mathcal{D}(\overline{\partial}_{m} + \overline{\partial}^t_{m})=\mathcal{D}((\overline{\partial}_{m} + \overline{\partial}^t_{m})_{\min})$.
Since the inclusion $\mathcal{D}(\overline{\partial}_m + \overline{\partial}^t_m)\supset \mathcal{D}((\overline{\partial}_m + \overline{\partial}^t_m)_{\min})$
is clear we are left to prove the other inclusion $\mathcal{D}(\overline{\partial}_{m} + \overline{\partial}^t_{m})\subset
\mathcal{D}((\overline{\partial}_{m} + \overline{\partial}^t_{m})_{\min})$. As $(\tilde{M}_1,\tilde{g}_1)$ is complete    it is enough to prove that $\Omega^{m}_c(\tilde{M}_1)\subset \mathcal{D}((\overline{\partial}_{m} + \overline{\partial}^t_{m})_{\min})$.
According to Prop. \ref{monacchi} we know that $(M_1\setminus Z,g_1|_{M_1\setminus Z})$ is parabolic. Let $\{\phi_i\}\subset C_{c}^{\infty}(M_1\setminus Z)$ be a sequence of functions that makes $(M_1\setminus Z, g_1|_{M_1\setminus Z})$ parabolic.  Let $\{\tilde{\phi}_i\}\subset C^{\infty}(\tilde{M}_1)$ be defined as $\tilde{\phi}_i=\phi_i\circ \pi$.  Consider now any form $\omega\in \Omega^{m,\bullet }_c(\tilde{M}_1)$. Then $\{\tilde{\phi}_i\omega\}$ is a sequence of forms lying in $\Omega_c^{m,\bullet}(\tilde{M}_1)$ and, since $\omega\in L^{\infty}\Omega^{m,\bullet}(\tilde{M_1},\tilde{g}_1)$, we can argue  as in the proof of Prop. 3.1 in \cite{FBei} in order  to show that $\tilde{\phi}_i\omega\rightarrow \omega$ as $i\rightarrow \infty$ in the graph norm of \eqref{pluff}.
This tells us that   $\mathcal{D}(\overline{\partial}_{m} + \overline{\partial}^t_{m})\subset
\mathcal{D}((\overline{\partial}_{m} + \overline{\partial}^t_{m})_{\min})$ and thus $\mathcal{D}(\overline{\partial}_{m} + \overline{\partial}^t_{m})=
\mathcal{D}((\overline{\partial}_{m} + \overline{\partial}^t_{m})_{\min})$. Therefore the minimal extension of \eqref{pluf} coincides with \eqref{pluff}. Now, using the fact that $\overline{\partial}_{m} + \overline{\partial}^t_{m}:L^2\Omega^{m,\bullet}(\tilde{M}_1,\tilde{g}_1)\rightarrow L^2\Omega^{m,\bullet}(\tilde{M}_1,\tilde{g}_1)$
is self-adjoint, we   get that $((\overline{\partial}_{m} + \overline{\partial}^t_{m})_{\min})^*=(\overline{\partial}_{m} + \overline{\partial}^t_{m})_{\min}$. On the other hand we have $((\overline{\partial}_{m} + \overline{\partial}^t_{m})_{\min})^*=(\overline{\partial}_{m} + \overline{\partial}^t_{m})_{\max}$. Therefore we are lead to the conclusion that $(\overline{\partial}_{m} + \overline{\partial}^t_{m})_{\max}=(\overline{\partial}_{m} + \overline{\partial}^t_{m})_{\min}$  and this amounts  to saying that \eqref{pluf} is essentially self-adjoint.
\end{proof}

\begin{prop}
\label{lit}
Let $(\tilde{M},\tilde{g})$,  and $A$ be as in Prop. \ref{nonso}. Then the following three operators coincide:
\begin{align}
\label{ttt}
& \overline{\partial}_{m,q,\max}:L^2\Omega^{m,q}(A,\tilde{g}|_A)\rightarrow L^2\Omega^{m,q+1}(A,\tilde{g}|_A),\\
\label{bilo}
& \overline{\partial}_{m,q,\min}:L^2\Omega^{m,q}(A,\tilde{g}|_A)\rightarrow L^2\Omega^{m,q+1}(A,\tilde{g}|_A),\\
\label{ses}
& \overline{\partial}_{m,q}:L^2\Omega^{m,q}(\tilde{M},\tilde{g})\rightarrow L^2\Omega^{m,q+1}(\tilde{M},\tilde{g}),
\end{align}
where \eqref{ses} is the unique closed extension of  $\overline{\partial}_{m,q}:\Omega_c^{m,q}(\tilde{M})\rightarrow \Omega_c^{m,q+1}(\tilde{M})$ viewed as an unbounded and densely defined operator acting between $L^2\Omega^{m,q}(\tilde{M},\tilde{g})$ and  $L^2\Omega^{m,q+1}(\tilde{M},\tilde{g}).$
\end{prop}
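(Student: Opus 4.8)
The plan is to pin the unique closed extension \eqref{ses} on the complete manifold $\tilde{M}_1$ between the minimal extension \eqref{bilo} and the maximal extension \eqref{ttt} on $A$, and then to collapse the sandwich using the cut-off estimate already established in Prop. \ref{occaminonna}. First I would record the standing facts: since $X=\tilde{M}_1\setminus A$ is an analytic subset and hence has measure zero, $L^2\Omega^{m,q}(A,\tilde{g}_1|_A)=L^2\Omega^{m,q}(\tilde{M}_1,\tilde{g}_1)$ for every $q$, so all three operators act between the same Hilbert spaces. Moreover $(\tilde{M}_1,\tilde{g}_1)$ is complete, so by the discussion recalled in Section 1 the Hodge--Dolbeault operator is essentially self-adjoint; consequently $\overline{\partial}_{m,q}$ and $\overline{\partial}^t_{m,q}$ on $\tilde{M}_1$ each admit a unique closed extension. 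Write $B$ for the operator \eqref{ses}, so that $B=\overline{\partial}_{m,q,\min}(\tilde{M}_1)=\overline{\partial}_{m,q,\max}(\tilde{M}_1)$ and $B^*$ is the unique closed extension of $\overline{\partial}^t_{m,q}$ on $\tilde{M}_1$.

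Next I would establish the two elementary inclusions. Because $\Omega^{m,q}_c(A)\subseteq\Omega^{m,q}_c(\tilde{M}_1)$, the graph closure over the smaller core is contained in that over the larger one, giving $\overline{\partial}_{m,q,\min}(A)\subseteq B$. On the other hand a distributional identity $\overline{\partial}\omega=\alpha$ holding on all of $\tilde{M}_1$ restricts to the same identity on the open set $A$, so $B=\overline{\partial}_{m,q,\max}(\tilde{M}_1)\subseteq\overline{\partial}_{m,q,\max}(A)$. Thus $\overline{\partial}_{m,q,\min}(A)\subseteq B\subseteq\overline{\partial}_{m,q,\max}(A)$, that is, the operator \eqref{bilo} is contained in \eqref{ses}, which in turn is contained in \eqref{ttt}.

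The core of the proof is to reverse these inclusions, and here the incompleteness of $A$ is overcome exactly as in Prop. \ref{occaminonna}. That computation shows that for every $\omega\in\Omega^{m,q}_c(\tilde{M}_1)$ the cut-offs $\tilde{\phi}_i\omega\in\Omega^{m,q}_c(A)$ converge to $\omega$ in the graph norm of $\overline{\partial}_m+\overline{\partial}^t_m$; since $\omega$ has pure bidegree $(m,q)$ one has $\|(\overline{\partial}_m+\overline{\partial}^t_m)\eta\|^2=\|\overline{\partial}_{m,q}\eta\|^2+\|\overline{\partial}^t_{m,q-1}\eta\|^2$ for such $\eta$, so the convergence already takes place in the graph norm of $\overline{\partial}_{m,q}$ alone. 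Hence $\Omega^{m,q}_c(\tilde{M}_1)\subseteq\mathcal{D}(\overline{\partial}_{m,q,\min}(A))$, which forces $B\subseteq\overline{\partial}_{m,q,\min}(A)$ and therefore shows that \eqref{bilo} equals \eqref{ses}. Running the identical cut-off argument on $\Omega^{m,q+1}_c(\tilde{M}_1)$ yields $\overline{\partial}^t_{m,q,\min}(A)=B^*$; taking Hilbert-space adjoints and using $(\overline{\partial}^t_{m,q,\min}(A))^*=\overline{\partial}_{m,q,\max}(A)$ together with $(B^*)^*=B$ gives that \eqref{ttt} equals $B$, hence equals \eqref{ses}. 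Combining the two computations, the three operators coincide.

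The single genuine obstacle is that $A$ is incomplete, so a priori $\overline{\partial}_{m,q,\min}(A)\subsetneq\overline{\partial}_{m,q,\max}(A)$. What closes the gap is the parabolicity of $(M_1\setminus Z,g_1|_{M_1\setminus Z})$ from Prop. \ref{monacchi}: pulled back to $\tilde{M}_1$ and fed into the product rule $\overline{\partial}(\tilde{\phi}_i\omega)=\tilde{\phi}_i\overline{\partial}\omega+\overline{\partial}\tilde{\phi}_i\wedge\omega$, it makes the boundary term vanish since $\|\overline{\partial}\tilde{\phi}_i\wedge\omega\|_{L^2}\le\|d\tilde{\phi}_i\|_{L^2}\|\omega\|_{L^\infty}\to0$, which is precisely the estimate already used in Prop. \ref{occaminonna}. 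Alternatively, one may invoke the general principle recalled in Section 1 that essential self-adjointness of the Hodge--Dolbeault operator together with that of the Laplacian forces the unique closure property for $\overline{\partial}_{m,q}$, the Laplacian's essential self-adjointness on $A$ following from the very same cut-off argument.
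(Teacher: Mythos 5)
Your argument is correct, and it reaches the conclusion by a more hands-on route than the paper. The paper's proof is a one-line deduction: it combines Prop.~\ref{occaminonna} (essential self-adjointness of $\overline{\partial}_m+\overline{\partial}_m^t$ on $A$ and its identification with the operator on $\tilde{M}$) with Lemma~2.3 of \cite{BruLe}, the abstract Hilbert-complex lemma which transfers the coincidence of minimal and maximal extensions from the total operator $D=d+d^*$ to each individual differential $d_q$. You instead bypass that citation and redo everything degree by degree: the two easy inclusions give the sandwich $\overline{\partial}_{m,q,\min}(A)\subseteq\overline{\partial}_{m,q}(\tilde{M})\subseteq\overline{\partial}_{m,q,\max}(A)$, the parabolicity cut-offs (correctly localized to the compact support of $\omega$, which meets only finitely many translates of a fundamental domain) collapse the first inclusion, and the duality $(\overline{\partial}^t_{m,q,\min})^*=\overline{\partial}_{m,q,\max}$ applied to the same cut-off argument for $\overline{\partial}^t$ collapses the second. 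Your observation that for forms of pure bidegree the graph norm of $\overline{\partial}_m+\overline{\partial}_m^t$ controls that of $\overline{\partial}_{m,q}$ alone is what lets you recycle the estimate from Prop.~\ref{occaminonna} verbatim. What your version buys is self-containedness and a transparent view of exactly where parabolicity enters (killing the $\overline{\partial}\tilde{\phi}_i\wedge\omega$ term); what the paper's version buys is brevity, since the passage from $D$ to the $d_q$'s is exactly the content of the Br\"uning--Lesch lemma you are in effect reproving.
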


\begin{proof}
This follows immediately by Prop. \ref{occaminonna} and Lemma 2.3 in \cite{BruLe}. 
\end{proof}

We have now all the ingredients to prove Th. \ref{nonso}.
\begin{proof}
In order to avoid any confusion with the notation, along the proof  we will label with $\Gamma_1$ and $\Gamma_2$ the fundamental groups of $M_1$ and $M_2$ respectively. We start by pointing out that $\pi_1|_A:A\rightarrow M_1\setminus Z$ is a Galois $\Gamma_1$-covering of $M_1\setminus Z$. Similarly, defining $Y:=\pi_2^{-1}(W)$ and $B:=\tilde{M_2}\setminus Y$, we have that $\pi_2|_B:B\rightarrow M_2\setminus W$ is a Galois $\Gamma_2$-covering of $M_2\setminus W$. Let $\tilde{\phi}$ be a lifting of $\phi$, that is a map $\tilde{\phi}:\tilde{M_1}\rightarrow \tilde{M_2}$ such that  $\pi_2\circ \tilde{\phi}=\phi\circ \pi_1$. Let $g_2$ be any Hermitian metric on $M_2$, let $\tilde{g_2}:=\pi_2^*g_2$ and let $\gamma:=\tilde{\phi}^*\tilde{g}_2$. Then $\gamma$ is a positive semidefinite Hermitian product on $\tilde{M_1}$ which is strictly positive on $A$. According to Prop. \ref{lit} we have $$\mathcal{H}^{m,0}_{2,\overline{\partial}_{\min}}(B,\tilde{g}_2|_{B})=\mathcal{H}^{m,0}_{2,\overline{\partial}_{\max}}(B,\tilde{g}_2|_{B})=\mathcal{H}^{m,0}_{2,\overline{\partial}}(\tilde{M}_2,\tilde{g}_2).$$ Moreover, as explained in the proof of Prop. \ref{group}, we know that the inclusions  $M_1\setminus Z\hookrightarrow M_1$ and $M_2\setminus W\hookrightarrow M_2$ induce 
a surjective map $\pi_1(M_1\setminus Z)\rightarrow \Gamma_1$ and an isomorphism $\pi_1(M_2\setminus Z)\cong \Gamma_2$ respectively. Furthermore we recall that $L^2\Omega^{m,0}(A,\gamma|_A)=L^2\Omega^{m,0}(A,\tilde{g}_1|_A)=L^2\Omega^{m,0}(\tilde{M}_1,\tilde{g}_1)$ as a consequence of Prop. \ref{brick} and the fact that $X$ has measure zero in $\tilde{M}_1$ with respect to $\dvol_{\tilde{g}_1}$. Altogether we can conclude that  $\mathcal{H}^{m,0}_{2,\overline{\partial}_{\max}}(A,\gamma|_A)$ and  $\mathcal{H}^{m,0}_{2,\overline{\partial}_{\max}}(B,\tilde{g}_2|_{B})$ have  the structure of  $\Gamma_1$-module and  $\Gamma_2$-module of $L^2\Omega^{m,0}(\tilde{M}_1,\tilde{g}_1)$ and $L^2\Omega^{m,0}(\tilde{M}_2,\tilde{g}_2)$ respectively. We also know that $\phi_*:\Gamma_1\rightarrow \Gamma_2$ is an isomorphism and moreover it is clear that $\tilde{\phi}$ induces an isomorphism between the $\Gamma_2$-module $\mathcal{H}^{m,0}_{2,\overline{\partial}_{\max}}(B,\tilde{g}_2|_{B})$ and the $\Gamma_1$-module $\mathcal{H}^{m,0}_{2,\overline{\partial}_{\max}}(A,\gamma|_A)$. Thus, in order to conclude the proof, it suffices to show that 
\begin{equation}
\label{potager}
\mathcal{H}^{m,0}_{2,\overline{\partial}_{\max}}(A,\gamma|_A)=\mathcal{H}^{m,0}_{2,\overline{\partial}}(\tilde{M}_1,\tilde{g}_1).
\end{equation}
Let $\omega\in \mathcal{H}^{m,0}_{2,\overline{\partial}_{\max}}(A,\gamma|_A)$. By definition 
\begin{equation}
\label{pappa}
\omega\in \mathcal{D}(\overline{\partial}_{m,0,\max})\subset L^2\Omega^{m,0}(A,\gamma|_A)\ \text{and}\  \overline{\partial}_{m,0,\max}\omega=0.
\end{equation}
This in turn implies that 
\begin{equation}
\label{maxmax}
(\overline{\partial}_{m,0,\max})^*( \overline{\partial}_{m,0,\max} \omega)=0
\end{equation}
where $(\overline{\partial}_{m,0,\max})^*:L^2\Omega^{m,1}(A,\gamma|_A)\rightarrow L^2\Omega^{m,0}(A,\gamma|_A)$ is the Hilbert space adjoint of $\overline{\partial}_{m,0,\max}:L^2\Omega^{m,0}(A,\gamma|_A)\rightarrow L^2\Omega^{m,1}(A,\gamma|_A)$. Therefore, thanks to  \eqref{maxmax}, we know that  $\omega$
is in the null space of the maximal extension of $\Delta_{\overline{\partial},m,0}:L^2\Omega^{m,0} (A, \gamma|_A)\rightarrow L^2\Omega^{m,0} (A, \gamma|_A)$. As this latter operator is elliptic we can conclude that $\omega$  is smooth on $A$ and thus, by \eqref{pappa}, that $\omega$ is holomorphic on $A$. This, together with Prop. \ref{brick}, tells us that $\omega\in \mathcal{H}^{m,0}_{2,\overline{\partial}_{\max}}(A,\tilde{g}_1|_A)$ and finally, using Prop. \ref{lit}, we can conclude that $\omega\in \mathcal{H}^{m,0}_{2,\overline{\partial}}(\tilde{M}_1,\tilde{g}_1)$. So we proved that  $\mathcal{H}^{m,0}_{2,\overline{\partial}_{\max}}(A,\gamma|_A)\subset \mathcal{H}^{m,0}_{2,\overline{\partial}}(\tilde{M}_1,\tilde{g}_1)$. The reversed inclusion is straightforward. Indeed let $\omega\in \mathcal{H}^{m,0}_{2,\overline{\partial}}(\tilde{M}_1,\tilde{g}_1)$. Clearly $\omega$ is a holomorphic $(m,0)$-form on $\tilde{M_1}$ that lies in $L^2\Omega^{m,0}(\tilde{M}_1,\tilde{g}_1)$. Therefore $\omega$ is  holomorphic  on $A$ and it lies in $L^2\Omega^{m,0}(A,\tilde{g}_1|_A)$. In other words $\omega$ is  holomorphic  on $A$ and, thanks to Prop. \ref{brick}, it lies in $L^2\Omega^{m,0}(A,\gamma|_A)$, that is $\omega\in \mathcal{H}^{m,0}_{2,\overline{\partial}_{\max}}(A,\gamma|_A)$ as desired. The proof is thus concluded.
\end{proof}

We have the following applications of Th. \ref{nonso}.

\begin{cor}
\label{daidai}
Let $M_1$ and $M_2$ be two compact complex manifolds of complex dimension $m$. Assume that they are bimeromorphic. Then $$h^{m,0}_{(2),\overline{\partial}}(M_1)=h^{m,0}_{(2),\overline{\partial}}(M_2).$$
\end{cor}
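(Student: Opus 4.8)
The plan is to reduce the statement to Theorem~\ref{nonso} by dominating the bimeromorphic map with a common resolution. Recall that a bimeromorphic map between compact complex manifolds is a meromorphic map $f:M_1\dashrightarrow M_2$ admitting a meromorphic inverse; equivalently, there are analytic subsets away from which $f$ restricts to a biholomorphism. Such an $f$ need not be holomorphic, so Theorem~\ref{nonso} cannot be applied to it directly. The key observation is that one can always resolve the indeterminacies of $f$, thereby replacing it by genuine modifications.

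First I would consider the graph $\Gamma_f\subset M_1\times M_2$ of $f$, that is, the closure of the graph of $f$ taken over the locus where $f$ is holomorphic. This is an irreducible compact analytic subset of $M_1\times M_2$ of dimension $m$, and the two restricted projections $p_1:\Gamma_f\to M_1$ and $p_2:\Gamma_f\to M_2$ are proper bimeromorphic maps. Invoking resolution of singularities in the analytic category, I would choose a compact complex manifold $M_3$ together with a modification $\rho:M_3\to\Gamma_f$; composing with the projections yields holomorphic maps $\phi_1:=p_1\circ\rho:M_3\to M_1$ and $\phi_2:=p_2\circ\rho:M_3\to M_2$. Each $\phi_i$ is proper and restricts to a biholomorphism over the complement of a suitable analytic subset of $M_i$, so both $\phi_1$ and $\phi_2$ are modifications in the sense required by Theorem~\ref{nonso}.

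Next I would apply Theorem~\ref{nonso} to each of these modifications separately. Since $\phi_1:M_3\to M_1$ is a modification between compact complex manifolds of complex dimension $m$, Theorem~\ref{nonso} gives
\begin{equation*}
h^{m,0}_{(2),\overline{\partial}}(M_3)=h^{m,0}_{(2),\overline{\partial}}(M_1),
\end{equation*}
and likewise, applying it to $\phi_2:M_3\to M_2$,
\begin{equation*}
h^{m,0}_{(2),\overline{\partial}}(M_3)=h^{m,0}_{(2),\overline{\partial}}(M_2).
\end{equation*}
Combining the two equalities immediately yields $h^{m,0}_{(2),\overline{\partial}}(M_1)=h^{m,0}_{(2),\overline{\partial}}(M_2)$, as desired.

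The main obstacle, and the only genuinely nontrivial input, is the construction of the common resolution $M_3$ dominating both $M_1$ and $M_2$ through modifications; this rests on the resolution of singularities of the graph $\Gamma_f$ in the analytic category. Once the resolution is available, the remainder is a purely formal two-step application of Theorem~\ref{nonso}, requiring no further analysis of $L^2$-cohomology. When invoking Theorem~\ref{nonso} I would also take care to verify that $M_3$ is a genuine smooth compact complex manifold and that each $\phi_i$ meets the definition of modification employed there, namely that it is a biholomorphism off an analytic subset.
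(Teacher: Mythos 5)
Your proof is correct and follows essentially the same route as the paper, which simply says the corollary is immediate from Theorem~\ref{nonso} and the definition of bimeromorphic equivalence; you have merely spelled out the standard content hidden in that definition, namely the domination of the bimeromorphic map by a smooth common resolution of the graph, followed by two applications of Theorem~\ref{nonso}. The extra care you take to ensure $M_3$ is a compact complex manifold and that each $\phi_i$ is a genuine modification is exactly the right point to check and is a welcome elaboration of the paper's one-line argument.
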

\begin{proof}
This follow immediately by Th. \ref{nonso} and the definition of bimeromorphic map.
\end{proof}

We conclude the paper with the following proposition.

\begin{prop}
Let $M_1$ and $M_2$ be two compact K\"ahler  surfaces with infinite fundamental groups. Assume that they are bimeromorphic. Then we have the  equalities:
\begin{equation}
\label{chedire}
h^{p,0}_{(2)}(M_1)=h^{p,0}_{(2)}(M_2),\ p=0,1,2\\
\quad\quad\quad h^{0,q}_{(2)}(M_1)=h^{0,q}_{(2)}(M_2),\ q=0,1,2.
\end{equation}
Moreover the following two properties are equivalent:
\begin{enumerate}
\item $h^{1,1}_{(2)}(M_1)=h^{1,1}_{(2)}(M_2).$
\item $h^{1,1}(M_1)=h^{1,1}(M_2).$
\end{enumerate}
\end{prop}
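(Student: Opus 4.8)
The plan is to settle the two families $h^{p,0}_{(2)}$ and $h^{0,q}_{(2)}$ first and then to deduce the $(1,1)$ equivalence from an Euler characteristic computation. First I would record the facts that trivialize the extreme corners: since $M_1$ and $M_2$ are bimeromorphic there is a compact complex manifold dominating both via modifications, so by Prop. \ref{group} the fundamental groups are isomorphic; as this common group is infinite, the two universal coverings have infinite volume, whence $h^{0,0}_{(2)}(M_i)=h^{2,2}_{(2)}(M_i)=0$ exactly as in Prop. \ref{tianjin}. Next, Cor. \ref{daidai} applied with $m=2$ gives $h^{2,0}_{(2)}(M_1)=h^{2,0}_{(2)}(M_2)$, and the K\"ahler symmetry $h^{p,q}_{(2)}=h^{q,p}_{(2)}$ of \eqref{please} upgrades this to $h^{0,2}_{(2)}(M_1)=h^{0,2}_{(2)}(M_2)$. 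This already settles the cases $p=0,2$ and $q=0,2$ of \eqref{chedire}.

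The remaining cases $p=1$ and $q=1$ reduce, again by K\"ahler symmetry, to showing that $h^{0,1}_{(2)}$ is a bimeromorphic invariant. Here I would apply Th. \ref{gamma2} with $p=0$ to the universal covering, which gives $h^{0,0}_{(2)}(M_i)-h^{0,1}_{(2)}(M_i)+h^{0,2}_{(2)}(M_i)=\chi(M_i,\mathcal{O}_{M_i})$. Since the corner terms vanish this reads $\chi(M_i,\mathcal{O}_{M_i})=h^{2,0}_{(2)}(M_i)-h^{0,1}_{(2)}(M_i)$. The holomorphic Euler characteristic $\chi(M,\mathcal{O}_M)=1-h^{1,0}(M)+h^{2,0}(M)$ of a compact complex surface is a classical bimeromorphic invariant (both the irregularity $h^{1,0}$ and the geometric genus $h^{2,0}$ are), and $h^{2,0}_{(2)}$ is invariant by the first step; hence $h^{0,1}_{(2)}(M_1)=h^{0,1}_{(2)}(M_2)$, and $h^{1,0}_{(2)}(M_i)=h^{0,1}_{(2)}(M_i)$ completes \eqref{chedire}.

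For the equivalence I would compare the ordinary and the $L^2$ Fr\"olicher identities. Expanding $\chi(M)=\sum_{p,q}(-1)^{p+q}h^{p,q}(M)$ with the K\"ahler symmetries yields $\chi(M)=2-4h^{1,0}(M)+2h^{2,0}(M)+h^{1,1}(M)$, whereas Th. \ref{gammafrol}, combined with the vanishing of the corners and the symmetries of \eqref{hhh} and \eqref{please}, yields $\chi(M)=-4h^{1,0}_{(2)}(M)+2h^{2,0}_{(2)}(M)+h^{1,1}_{(2)}(M)$. Subtracting, the topological term $\chi(M)$ cancels and one is left with
\[ h^{1,1}(M)-h^{1,1}_{(2)}(M)=-2+4\bigl(h^{1,0}(M)-h^{1,0}_{(2)}(M)\bigr)-2\bigl(h^{2,0}(M)-h^{2,0}_{(2)}(M)\bigr). \]
Every quantity on the right is a bimeromorphic invariant ($h^{1,0},h^{2,0}$ classically, $h^{1,0}_{(2)},h^{2,0}_{(2)}$ by the steps above), so the defect $h^{1,1}-h^{1,1}_{(2)}$ takes the same value on $M_1$ and $M_2$. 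Rearranging then gives $h^{1,1}(M_1)-h^{1,1}(M_2)=h^{1,1}_{(2)}(M_1)-h^{1,1}_{(2)}(M_2)$, which is precisely the asserted equivalence.

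I expect the only genuinely delicate point to be conceptual rather than computational: neither $h^{1,1}$ nor $h^{1,1}_{(2)}$ is individually a bimeromorphic invariant (each jumps under a blow-up), and the topological Euler characteristic $\chi(M)$ used to relate them is itself not invariant either. The crux is the observation that these variations cancel, so that it is exactly the difference $h^{1,1}-h^{1,1}_{(2)}$ that survives as an invariant. Everything else is bookkeeping with the symmetries \eqref{hhh} and \eqref{please} and the classical bimeromorphic invariance of $h^{1,0}$, $h^{2,0}$ and $\chi(M,\mathcal{O}_M)$.
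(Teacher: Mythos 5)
Your proof is correct. The first half of your argument (the equalities \eqref{chedire}) is essentially identical to the paper's: vanishing of the corners from infinite volume, Cor. \ref{daidai} for $h^{2,0}_{(2)}$, the relation $\chi(M,\mathcal{O}_M)=h^{0,2}_{(2)}-h^{0,1}_{(2)}$ from Th. \ref{gamma2} together with the bimeromorphic invariance of $\chi(M,\mathcal{O}_M)$ to handle $h^{0,1}_{(2)}$, and the symmetries \eqref{hhh}, \eqref{please} for the rest. Where you genuinely diverge is the $(1,1)$ equivalence. The paper runs both Hodge index theorems: with all $h^{p,q}$ and $h^{p,q}_{(2)}$ for $(p,q)\neq(1,1)$ already matched, Th. \ref{gammahodge} shows that $h^{1,1}_{(2)}(M_1)=h^{1,1}_{(2)}(M_2)$ is equivalent to $\sigma(M_1)=\sigma(M_2)$, and the classical Hodge index theorem shows the same for $h^{1,1}$, so the signature is the pivot. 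You instead use the two Fr\"olicher identities (classical and Th. \ref{gammafrol}) on a single $M$, cancel $\chi(M)$, and extract the explicit formula $h^{1,1}(M)-h^{1,1}_{(2)}(M)=-2+4\bigl(h^{1,0}(M)-h^{1,0}_{(2)}(M)\bigr)-2\bigl(h^{2,0}(M)-h^{2,0}_{(2)}(M)\bigr)$, whose right-hand side is a bimeromorphic invariant by the first half; your computation checks out. Your route buys something the paper's does not state: the defect $h^{1,1}-h^{1,1}_{(2)}$ is itself a bimeromorphic invariant with an explicit expression, even though neither term is invariant separately, and it uses only the Euler-characteristic identities rather than the signature machinery that is the paper's main theorem. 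The paper's route is shorter and deliberately showcases Th. \ref{gammahodge}. One cosmetic remark: your appeal to Prop. \ref{group} to get infinite fundamental groups is unnecessary, since both groups are assumed infinite in the hypothesis.
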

\begin{proof}
 Since $M_1$ and $M_2$ are bimeromorphic it is known that $h^{0,q}(M_1)=h^{0,q}(M_2)$ with $q=0,1,2$ and analogously   $h^{p,0}(M_1)=h^{p,0}(M_2)$ with $p=0,1,2$. By duality this tells us that $h^{2,q}(M_1)=h^{2,q}(M_2)$ with $q=0,1,2$ and analogously   $h^{p,2}(M_1)=h^{p,2}(M_2)$ with $p=0,1,2$. Thanks to Cor.\ref{daidai} we know that $h^{2,0}_{(2)}(M_1)=h^{2,0}_{(2)}(M_2)$ and therefore by duality $h^{0,2}_{(2)}(M_1)=h^{0,2}_{(2)}(M_2)$.  Moreover $h^{0,0}_{(2)}(M_1)=h^{0,0}_{(2)}(M_2)=0$ and, since we know that $\chi(M_1,\mathcal{O}_{M_1})=h^{0,2}_{(2)}(M_1)-h^{0,1}_{(2)}(M_1)$ that  $\chi(M_2,\mathcal{O}_{M_2})=h^{0,2}_{(2)}(M_2)-h^{0,1}_{(2)}(M_2)$ and that $\chi(M_1,\mathcal{O}_{M_1})=\chi(M_2,\mathcal{O}_{M_2})$, we can conclude that $h^{0,1}_{(2)}(M_1)=h^{0,1}_{(2)}(M_2)$. Again by duality we have $h^{2,1}_{(2)}(M_1)=h^{2,1}_{(2)}(M_2)$, $h^{1,0}_{(2)}(M_1)=h^{1,0}_{(2)}(M_2)$ and $h^{1,2}_{(2)}(M_1)=h^{1,2}_{(2)}(M_2)$. This shows \eqref{chedire}. Assume now that $h^{1,1}_{(2)}(M_1)=h^{1,1}_{(2)}(M_2)$. Then we have $h^{p,q}_{(2)}(M_1)=h^{p,q}_{(2)}(M_2)$ for each $p=0,1,2$ and $q=0,1,2$. Consequently by Th. \ref{gammahodge} we have $\sigma(M_1)=\sigma(M_2)$. On the other hand we already know that $h^{p,q}(M_1)=h^{p,q}(M_2)$ whenever $(p,q)\neq (1,1)$. Hence, as a consequence of the classical Hodge index theorem, we have $h^{1,1}(M_1)=h^{1,1}(M_2)$ as desired. Conversely assume that $h^{1,1}(M_1)=h^{1,1}(M_2)$. Then $\sigma(M_1)=\sigma(M_2)$. Therefore, as we already know that $h^{p,q}_{(2)}(M_1)=h^{p,q}_{(2)}(M_2)$ whenever $(p,q)\neq (1,1)$, we can conclude, thanks to Th.\ref{gammahodge}, that $h^{1,1}_{(2)}(M_1)=h^{1,1}_{(2)}(M_2)$. The proof is thus complete. 
\end{proof}

\end{document}